
\documentclass[a4paper,12pt]{article}
%
%
\usepackage{graphicx,amssymb,amstext,amsmath}

\usepackage[latin1]{inputenc}
\usepackage{ tipa }
\usepackage{amsthm}
\usepackage{dsfont}
\usepackage{amsmath}
\usepackage{amsfonts}
\usepackage{amssymb}
\usepackage{mathrsfs}
\usepackage{graphicx}
\usepackage{graphicx,color}
\usepackage[all]{xy}
\usepackage[total={5.2in,9.1in},
top=1.4in, left=1.55in, includefoot]{geometry}

\setlength{\textheight}{23.5cm}
\setlength{\headheight}{0cm}
\setlength{\topmargin}{0cm}
\setlength{\textwidth}{16cm}
\setlength{\oddsidemargin}{0in}
\setlength{\evensidemargin}{0in}

\newtheorem{theorem}{Theorem}
\theoremstyle{plain}
\newtheorem{maintheorem}{Theorem}
\newtheorem{maincorollary}[maintheorem]{Corollary}

\newtheorem{Defi}{Definition}
\newtheorem{R}{Remark}

\newtheorem{T}{Theorem}[section]
\newtheorem{Pro}{Proposition}
\newtheorem{C}{Corollary}

\newtheorem{Le}{Lemma}[section]

\newtheorem{claim}{Claim}

\newcommand{\ds}{\displaystyle}

\newcommand{\sgn}{\operatorname{sgn}}
\newcommand{\re}{\mathbb{R}}

\newcommand{\cR}{\mathcal{R}}
\newcommand{\cL}{\mathcal{L}}
\newcommand{\cM}{\mathcal{M}}
\newcommand{\NN}{\mathbb{N}}

\newcommand{\interior}{\operatorname{int}}

\newcommand{\RR}{\mathbb{R}}
\newcommand{\cU}{\mathcal{U}}
\newcommand{\ZZ}{\mathbb{Z}}
\newcommand{\QQ}{\mathbb{Q}}
\newcommand{\dist}{\operatorname{dist}}
\newcommand{\diam}{\operatorname{diam}}

\title{\bf{Hausdorff Dimension, Lagrange and Markov Dynamical Spectra for Geometric Lorenz Attractors}}
\author{C. G.  Moreira\footnote{Partially supported by CNPq,
  PRONEX-Dyn.Syst.}, M. J. Pacifico\footnote{ Partially supported by CNPq,
  PRONEX-Dyn.Syst., FAPERJ} \\and S.  Roma\~na }

\date{June, 07, 2018}
\begin{document}

\maketitle

\begin{abstract}
\noindent In this paper, we show that  geometric Lorenz attractors have Hausdorff dimension strictly greater than $2$.
We use this result to show that for a ``large" set of real functions the Lagrange and Markov Dynamical spectrum associated to these attractors has persistently non-empty interior. 
\end{abstract}
\section{Introduction}
\noindent In 1963 the meteorologist E. Lorenz published in the Journal of Atmospheric Sciences \cite{Lorenz} an example of a parametrized polynomial system of differential equations

\begin{eqnarray}\label{E1-Int}
\dot{x}&=& a(y-x) \ \ \ \ \ \ \ \ \  \ \ \ \ \ \ \ \ \ \ a=10 \nonumber\\
\dot{y}&=& rx-y-xz \ \ \ \ \ \ \ \ \  \ \ \ \ \  \ r=28\\
\dot{z}&=& xy-bz \ \ \  \ \ \  \ \ \  \ \ \ \ \ \ \ \ \ \ \ \ b={8}/{3} \nonumber
\end{eqnarray}
\noindent as a very simplified model for thermal fluid convection, motivated by an attempt to understand the foundations of weather forecast.
Numerical simulations for an open neighborhood of the chosen parameters suggested that almost all points in phase space tend to a strange attractor, called the \textit{Lorenz attractor}. However Lorenz's equations proved to be very resistant to rigorous mathematical analysis,
\begin{figure}[hbtp]
\centering
\includegraphics[scale=0.3]{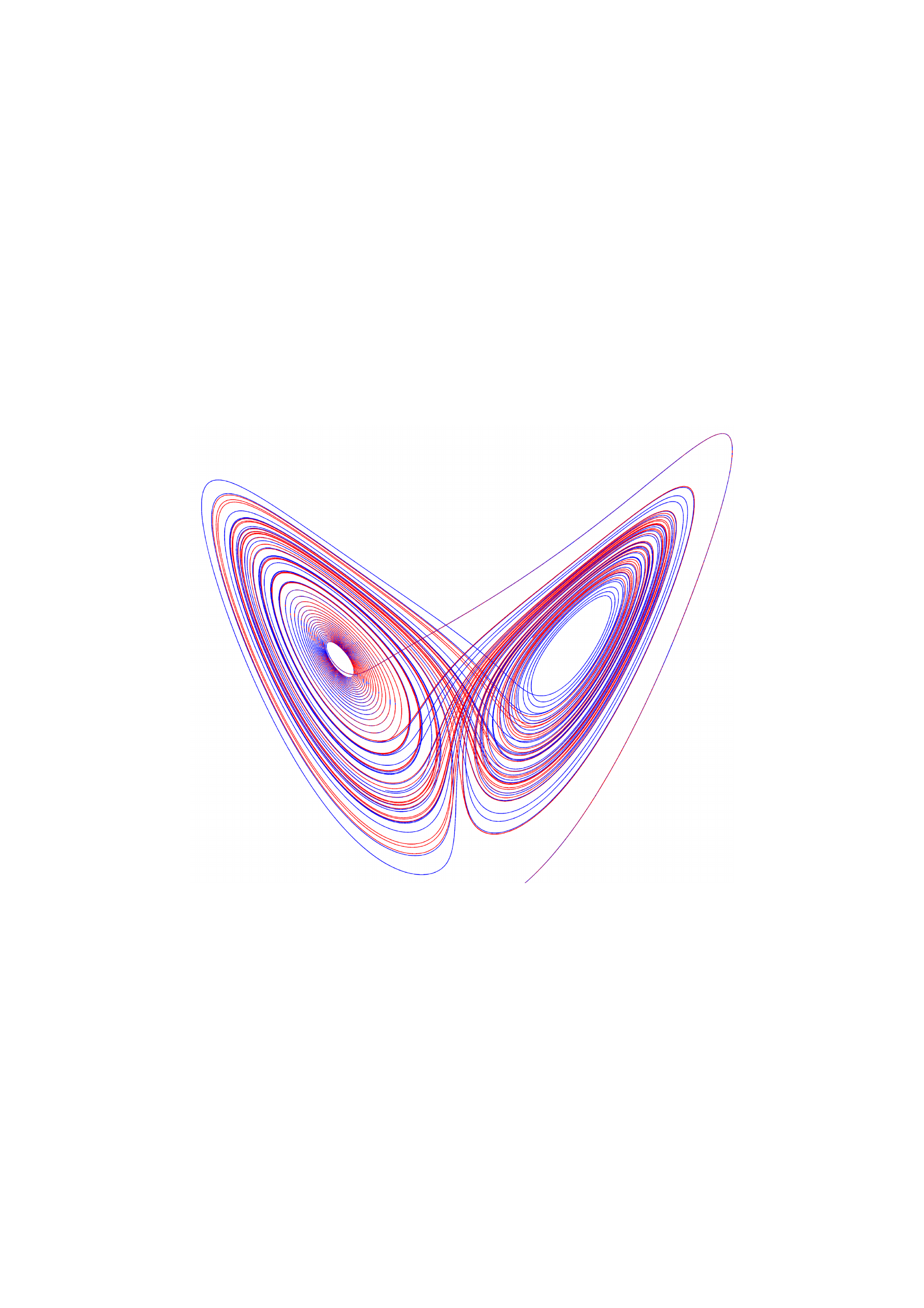}
\caption{Lorenz attractor}
\end{figure}
and also presented very serious difficulties to rigorous numerical study.\\
A very successful approach was taken by Afraimovich, Bykov and Shil'nikov \cite{Afra1}, 
and Guckenheimer, Williams \cite{GW}, independently: they constructed the so called \textit{geometric Lorenz models} for the behavior observed by Lorenz (see section \ref{CGM} for precise definition). These models are flows in $3$-dimensions for which one can rigorously prove 
the coexistence of  an equilibrium point accumulated by regular orbits.
Recall that a regular solution is an orbit where the flow does not vanish. 
Most remarkably, this attractor is robust: it can not be destroyed by  small perturbation of the original flow.
 Taking into account that the divergence of the vector field induced by the
system (\ref{E1-Int}) is negative, it follows that the Lebesgue measure of the Lorenz attractor
is zero. Henceforth, it is natural to ask  about its Hausdorff dimension. 
Numerical experiments give that this value is approximately
equal to 2.062 (cf. \cite{Divakar}) and also, for some parameter, the dimension of the physical invariant measure  lies in the interval $[1.24063,1.24129]$ (cf. \cite{GN}).
In this paper we address the problem to prove that the Hausdorff dimension of a geometric Lorenz attractor is strictly greater that $2$.
In \cite{Afra4} and \cite{Steingenber}, this  dimension is characterized in terms of the pressure of the system and in terms of the Lyapunov exponents and the entropy with respect to a good invariant measure associated to the geometric model. But, in both cases, the authors prove that the Hausdorff dimension is greater or equal than  $2$, but not necessary strictly greater than $2$.
A first attempt to obtain the strict inequality was given in \cite{ML},
where the authors achieve this result in the particular case that  both branches of the unstable manifold of the equilibrium meet the stable manifold of the equilibrium. But this condition is quite strong and extremely unstable.  
One of our goals in this paper is to prove the strict inequality for the Hausdorff dimension for any geometric Lorenz attractor. Thus, our first result is
 \begin{maintheorem}\label{T1}
 The Hausdorff dimension of a geometric Lorenz attractor is strictly greater than $2$.
 \end{maintheorem}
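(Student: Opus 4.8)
The plan is to exhibit inside the attractor $\Lambda$ a uniformly hyperbolic basic set of the flow whose Hausdorff dimension already exceeds $2$; monotonicity of Hausdorff dimension then proves the theorem. I work on the global cross-section $\Sigma$ of Section~\ref{CGM}, where the Poincar\'e first-return map has the skew form $P(x,y)=(f(x),g(x,y))$ with $\partial_y g$ depending essentially only on $x$. Here $f$ is the one-dimensional Lorenz map: piecewise $C^{1+\alpha}$, discontinuous only at $x=0$, transitive on its interval, with $|f'|\ge\sqrt2$ everywhere and $|f'|\to\infty$ as $x\to0$; while $|\partial_y g|\le\lambda<1$ everywhere, with $|\partial_y g|\to0$ as $x\to0$. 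Any compact hyperbolic set $K$ of $P$ bounded away from $\Gamma=\{x=0\}$ has bounded return time, so the compact flow-invariant set $\Lambda_0$ it generates satisfies $\Lambda_0\subset\Lambda$ and $\dim_H\Lambda_0=1+\dim_H K$; and since $P$ is a local $C^{1+\alpha}$ surface diffeomorphism near $K$, the McCluskey--Manning formula gives $\dim_H K=d_u(K)+d_s(K)$, where $d_u(K)$ and $d_s(K)$ are the roots of Bowen's pressure equations $\mathcal P\big(P|_K,-t\log|f'|\big)=0$ and $\mathcal P\big(P|_K,\,t\log|\partial_y g|\big)=0$. So it suffices to produce such a $K$ with $d_u(K)+d_s(K)>1$.

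For each small $\delta>0$ let $K_\delta$ be the maximal invariant set of $P$ inside $\{|x|\ge\delta\}$; off any neighbourhood of $\Gamma$ the hyperbolicity constants of $P$ are uniform, so $K_\delta$ is uniformly hyperbolic, and (passing to a transitive component that realizes its full dimension) I treat it as a basic set. Its projection to the $x$-axis is the survivor set $I_\delta=\{x:\ |f^nx|\ge\delta\ \ \forall n\ge0\}$ of $f$. The theorem follows from two facts about the limit $\delta\to0$. First, $d_u(K_\delta)\to1$: the one-dimensional Lorenz map $f$ carries an absolutely continuous invariant measure $\mu$ with $h_\mu=\int\log|f'|\,d\mu$ (Rokhlin's formula), while $h_\nu\le\int\log|f'|\,d\nu$ for every invariant $\nu$ (Ruelle's inequality), so $\mathcal P(f,-t\log|f'|)$ vanishes exactly at $t=1$; as the hole $(-\delta,\delta)$ shrinks to a point, its $\mu$-mass and hence the escape rate and pressure drop of the open system $f|_{I_\delta}$ tend to $0$, so the root $d_u(K_\delta)$ of $\mathcal P(f|_{I_\delta},-t\log|f'|)=0$ tends to $1$. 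Second, $\liminf_{\delta\to0}d_s(K_\delta)>0$: evaluating $\mathcal P(f|_{I_\delta},\,t\log|\partial_y g|)$ on measures supported in $I_\delta$ that approximate $\mu$ gives $d_s(K_\delta)\ge h_\mu\big/\big|\!\int\log|\partial_y g|\,d\mu\big|-o(1)$, whose limit is a fixed positive number: $h_\mu>0$ since $|f'|\ge\sqrt2$, and $\int|\log|\partial_y g||\,d\mu<\infty$ since $\log|\partial_y g|$ has only a logarithmic singularity at $x=0$ while the superexpansion of $f$ there keeps the density of $\mu$ bounded near $x=0$. Hence $d_u(K_\delta)+d_s(K_\delta)>1$ for all small $\delta$, and $\dim_H\Lambda\ge1+d_u(K_\delta)+d_s(K_\delta)>2$. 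Letting $\delta\to0$ even gives the explicit bound $\dim_H\Lambda\ge2+d_s^\ast$ with $d_s^\ast\in(0,1)$ the root of $\mathcal P(f,\,t\log|\partial_y g|)=0$, the strict inequality $d_s^\ast<1$ coming from uniform volume contraction $|\det DP|\le c_0<1$.

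The heart of the matter — and the step most at risk — is the tension between these two facts: forcing the unstable dimension up to $1$ makes the hyperbolic set shadow $\mu$-typical orbits, which must come arbitrarily close to the singularity, precisely where $P$ contracts the strong-stable direction arbitrarily strongly and so threatens to drive $d_s$ to $0$. What saves the balance is that the relevant quantity is the $\mu$-\emph{average} $\exp\!\big(\int\log|\partial_y g|\,d\mu\big)>0$ of the contraction rather than its worst value, and this rests on the defining feature of the geometric Lorenz model — in particular $\lambda_1+\lambda_2>0$, which makes $f$ genuinely expanding, with expansion exponent $(\lambda_1+\lambda_2)/\lambda_1<1$ at the discontinuity, and thus keeps the density of $\mu$ tame near $x=0$. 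The bound $2+d_s^\ast$ is presumably not sharp, since $\bigcup_\delta K_\delta$ ignores the orbits that accumulate on $\Gamma$; a finer construction — full-shift horseshoes built from many pairwise disjoint branches of a high iterate $f^{n}$, each making a single controlled close pass to the singularity at a common depth — should, after a bounded-distortion analysis, give $d_u\to1$ with a positive limiting $d_s$ computable from $\lambda_1,\lambda_2,\lambda_3$ and hence a sharper quantitative lower bound. The remaining work along either route is routine but must be handled with care: distortion estimates along branches with exploding derivatives near $\Gamma$, continuity in $\delta$ of the pressure of the open system $f|_{I_\delta}$ for the unbounded potential $\log|\partial_y g|$, and the check that a transitive piece of $K_\delta$ carries the claimed dimensions.
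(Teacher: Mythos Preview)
Your overall strategy matches the paper's: exhibit inside the cross-section a nested family of uniformly hyperbolic basic sets for $P$ whose unstable Hausdorff dimension tends to $1$ while the stable dimension stays bounded away from $0$, then suspend and add $1$ for the flow direction. Where you diverge is in how these basic sets are produced and their dimensions estimated. The paper builds them by hand (its Theorem~\ref{L.Principal}): using the locally-eventually-onto property of $f$ and the absolutely continuous invariant measure $\mu$, it constructs explicit regular Cantor sets $C_k\subset I$, bounded away from $0$, with $HD(C_k)\to1$. The argument is an elementary but intricate mix of combinatorics (splitting complementary gaps into $2^{m_k}$ pieces of equal $\mu$-mass, a Borel--Cantelli count of how many avoid a $1/m_k^3$-neighbourhood of $0$ for $4m_k$ iterates) and bounded-distortion estimates exploiting~(\ref{E3S2}). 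The positivity of the stable dimension is then disposed of by the soft fact (Proposition~\ref{Prop.3}) that every regular Cantor set has positive Hausdorff dimension; no quantitative bound is sought.

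You instead take the survivor sets $K_\delta$ and appeal to thermodynamic formalism: $d_u(K_\delta)\to1$ from Rokhlin's formula for $\mu$ plus continuity of pressure as the hole closes, and $\liminf d_s(K_\delta)\ge h_\mu\big/\big|\!\int\log|\partial_yg|\,d\mu\big|>0$ from the same approximating measures. This is conceptually cleaner and yields an explicit lower bound on $d_s$ that the paper does not attempt, but it relocates rather than removes the difficulty. The step you flag as ``routine but must be handled with care'' --- approximating $\mu$ simultaneously in entropy and in the integrals of the \emph{unbounded} functions $\log|f'|$ and $\log|\partial_yg|$ by ergodic measures supported on $I_\delta$, or equivalently continuity of the open-system pressure for these unbounded potentials --- is essentially a Katok horseshoe theorem for the discontinuous, non-uniformly-expanding map $f$, and is not available off the shelf for such maps. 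It is precisely this that Theorem~\ref{L.Principal} supplies by direct construction; indeed the ``finer construction'' you sketch in your final paragraph (many disjoint branches of a high iterate, each with a single controlled close pass to $0$) is very close in spirit to what the paper actually carries out in Section~\ref{Sec-ALEO} and the proof of Theorem~\ref{L.Principal}. So your outline is sound and the route is viable, but the substantive work has been named rather than done.
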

 
To achieve this, since it is well known that the geometric Lorenz attractor is the
suspension of a skew product map with contracting invariant leaves, defined in a cross-section, we start studying the one dimensional map $f$ induced in the space of leaves. 
We are able to prove  the existence of an increasing nested sequence of fat (Hausdorff dimension almost $1$) regular Cantor sets of the one-dimensional map (theorem \ref{L.Principal}). This fact implies that the maximal invariant set $\Lambda_P$ for the skew product (or, to first return map $P$ associated to the flow) has Hausdorff dimension strictly  greater than $1$, and this, on its turn, 
implies that  the Hausdorff dimension of a geometric Lorenz attractor is strictly greater than $2$.
In another words, theorem \ref{T1} is a consequence of the following result
\begin{theorem}\label{L.Principal}
There is an increasing family of regular Cantor sets $C_k$ for $f$ such that 
$$HD(C_k)\to 1 \ \ \text{as} \ \ k\to +\infty.$$ 
\end{theorem}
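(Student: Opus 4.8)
The plan is to realise each $C_k$ as a \emph{survivor set} of $f$ avoiding a shrinking neighbourhood of the singularity $0$ and to compute its dimension via Bowen's equation. Write $I$ for the interval on which $f$ acts and recall from Section~\ref{CGM} the features of the one-dimensional Lorenz map that we use: $f$ is $C^{1+\alpha}$ on $I\setminus\{0\}$, has exactly two monotone branches whose images cover $I$, is uniformly expanding with $|f'(x)|\ge\lambda_0>1$ for $x\ne0$, and satisfies $|f'(x)|\to+\infty$ as $x\to0$, with the power-law behaviour $|f'(x)|\asymp|x|^{s_0-1}$, $s_0\in(0,1)$, of the geometric model near $0$. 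For $k\ge1$ set $U_k=(-1/k,1/k)$ and
\[
C_k=\{\,x\in I:\ f^{n}(x)\notin U_k\ \text{for all }n\ge0\,\}.
\]
The $C_k$ are compact, $f$-invariant, increasing in $k$, and nonempty for $k$ large (they contain periodic orbits bounded away from $0$); it thus remains to check that each such $C_k$ is a regular Cantor set and that $HD(C_k)\to1$.

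First, $C_k$ is a regular Cantor set. Along orbits in $C_k$ (which avoid $0$) the map is $C^{1+\alpha}$ with $\lambda_0\le|f'|\le M_k:=\sup_{|x|\ge1/k}|f'|<\infty$, so $f|_{C_k}$ is a uniformly expanding $C^{1+\alpha}$ map with bounded distortion whose maximal invariant set in the union of intervals $I\setminus U_k$ is $C_k$. It is nowhere dense: if $J\subseteq C_k$ were a nondegenerate interval then $f^{n}|_J$ would be monotone for all $n$ (no preimage of $0$ lies on the orbit of $J$), forcing $|f^{n}(J)|\ge\lambda_0^{\,n}|J|\to\infty$, absurd. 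With perfectness (routine) this makes $C_k$ a regular Cantor set. Equivalently, one may first pass to the first-return map of $f$ to a fixed interval disjoint from $0$ --- a full-branch piecewise $C^{1+\alpha}$ Markov map --- for which the analogues of the $C_k$ are literally limit sets of finite iterated function systems, the passage back to Cantor sets ``for $f$'' being by spreading them over their $f$-orbit, a finite union of bi-Lipschitz copies.

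For the dimension, Bowen's formula for $C^{1+\alpha}$ conformal repellers gives $HD(C_k)=s_k$, the unique zero of $s\mapsto P\big(f|_{C_k},-s\log|f'|\big)$ (topological pressure), and $HD(C_k)\le1$ trivially. On the full system, Rokhlin's formula for the absolutely continuous invariant measure $\mu$ of $f$ gives $h_\mu(f)=\int\log|f'|\,d\mu=:\lambda>0$, hence $P\big(f|_I,-s\log|f'|\big)\ge h_\mu-s\lambda=(1-s)\lambda>0$ for $s<1$, while Ruelle's inequality $h_\nu\le\int\log|f'|\,d\nu$ (for every $f$-invariant $\nu$) gives $P\big(f|_I,-\log|f'|\big)\le0$, so $P\big(f|_I,-\log|f'|\big)=0$. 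Since $P\big(f|_{C_k},-s\log|f'|\big)\le P\big(f|_I,-s\log|f'|\big)$, it suffices to prove the continuity under a shrinking hole
\[
P\big(f|_{C_k},-s\log|f'|\big)\ \longrightarrow\ P\big(f|_I,-s\log|f'|\big)\qquad(k\to\infty)
\]
for each fixed $s<1$: then $P(f|_{C_k},-s\log|f'|)>0$ for $k$ large, so $s_k\ge s$, and letting $s\uparrow1$ yields $HD(C_k)=s_k\to1$.

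The main obstacle is exactly this last convergence --- equivalently, that the escape rate of the open system $(f,\,I\setminus U_k)$ tends to $0$ as $k\to\infty$. Here the singularity is a help rather than a nuisance: since $|f'|\to\infty$ near $0$, the hole $U_k$ and all of its $f$-preimages are extremely thin in every equilibrium state of $-s\log|f'|$ on $I$ (whose mass at the single point $\{0\}$ is $0$), so removing $U_k$ costs only $o(1)$ of free energy. Quantitatively I would use the normal form $|f'(x)|\asymp|x|^{s_0-1}$ to bound the lengths of all cylinders meeting $U_k$, and then either run a transfer-operator perturbation argument for the potential $-s\log|f'|$ on $I\setminus U_k$, uniformly for $s$ near $1$ --- which is where the control near $0$ is needed --- or, through the first-return scheme above, reduce to a full-branch countable Markov map whose limit set is the whole interval and invoke the Mauldin--Urba\'nski approximation theorem, by which its finite subsystems exhaust Hausdorff dimension $1$; this is precisely $HD(C_k)\to1$.
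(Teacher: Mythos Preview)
Your approach is genuinely different from the paper's, and the high-level strategy is sound, but two steps are not justified and constitute real gaps.

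\medskip

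\textbf{How the paper argues.} The paper does not use thermodynamic formalism at all. It builds the $C_k$ by hand: inside $L_1=[-1/2,0)$ it produces, at stage $k$, a large collection of pairwise disjoint intervals $\tilde I^{(k)}_{i,r}$, each mapped diffeomorphically onto a fixed target $L_1^a$ by an explicit iterate $g_{i,r}=f^{m^{(k)}_{i,r}}\!\circ f^{j(i,r)}$ with controlled distortion. The absolutely continuous invariant measure (Lemma~\ref{ACM}) is used only as a counting device: one splits each complementary gap into $2^{m_k}$ pieces of equal $\mu$-measure and shows (Claims~1--4) that at least $2^{m_k-2}$ of them carry such a branch with $\sup|g_{i,r}'|\le 2^{(1+o(1))m_k}$. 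The regular Cantor set $C_k$ is then defined by these finitely many full branches, and the Palis--Takens lower bound $\sum(\Lambda_{1,\tilde I})^{-d_1}=1$ forces $d_1\ge 1-o(1)$. Everything is explicit and Markov by construction.

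\medskip

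\textbf{Gap 1: your $C_k$ need not be a regular Cantor set.} The paper's Definition~\ref{Regular Cantor set} requires a Markov partition: each $\psi(I_j)$ must be the convex hull of a union of the $I_s$. The survivor set for the hole $U_k=(-1/k,1/k)$ is compact, invariant and uniformly expanding, but the endpoints $\pm 1/k$ are arbitrary, not dynamically distinguished, so $f([-1/2,-1/k])$ and $f([1/k,1/2])$ bear no Markov relation to $[-1/2,-1/k]\cup[1/k,1/2]$. For a generic hole the survivor set is coded by a subshift that is not of finite type, and Bowen's formula in the clean form you invoke is not immediately available. This can be repaired (replace $U_k$ by a nearby Markov hole, or pass to an induced full-branch map), but it is not the routine step you present it as, and it is exactly here that the paper spends most of its effort.

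\medskip

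\textbf{Gap 2: the pressure continuity is asserted, not proved.} You correctly identify the convergence $P(f|_{C_k},-s\log|f'|)\to P(f|_I,-s\log|f'|)$ as ``the main obstacle'', and then offer two routes without executing either. For the transfer-operator route, the potential $-s\log|f'|$ is unbounded (it tends to $-\infty$ at $0$) and $f$ itself has a discontinuity there, so even defining the pressure of the full system and establishing a spectral gap uniform in $s$ near $1$ requires a weighted-space argument that you have not supplied. For the Mauldin--Urba\'nski route, you need to actually exhibit a full-branch induced $C^{1+\alpha}$ countable Markov map whose limit set is an interval; this is plausible for Lorenz maps but is itself a construction comparable in difficulty to the paper's Lemma~\ref{L2-ALEO}. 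In short, both alternatives outsource precisely the work the theorem is about.

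\medskip

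\textbf{Comparison.} Your outline, if completed, would give a more conceptual proof and would make the role of the singularity transparent (the hole has small equilibrium-measure, hence small escape rate). The paper's argument is more elementary --- no pressure, no spectral theory, no countable IFS machinery --- at the cost of a longer combinatorial construction; its payoff is that the Markov structure of each $C_k$ is explicit and the derivative bounds are quantitative. As it stands, your proposal is a reasonable programme but not yet a proof: to complete it you must either (i) build the induced full-branch system and quote Mauldin--Urba\'nski, or (ii) carry out the open-system perturbation for the singular potential. Either way you will end up doing work analogous to Section~\ref{Sec-ALEO}.
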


The proof of this theorem, although non-trivial, is relatively elementary, and combine techniques of several subjects of Mathematics, as Ergodic Theory, Combinatorics and Dynamical Systems (Fractal Geometry).
\vspace{0.2cm}

To announce the next {goal of this paper}, let us recall the classical notions of Lagrange and Markov spectra (see \cite{CF} for further explanation and details).

The {\em{Lagrange spectrum}} $\mathcal{L}$ is a classical  subset of the extended real line, related to Diophantine approximation. Given an irrational number $\alpha$, the first important result about upper bounds for Diophantine approximations is Dirichlet's approximation theorem, stating 
that for all  $\alpha \in \RR \setminus \QQ$, $|\alpha -\frac{p}{q}|< \frac{1}{q^2}$ has a
infinite number of solutions $\frac{p}{q}\in \QQ$.

Markov and Hurwitz improved this result  by verifying that, for all irrational $\alpha$, the inequality $|\alpha - p/q| < \frac{1}{ \sqrt{5}\cdot q^2 } $ has an infinite number of rational solutions $p/q$ and $\sqrt{5}$ is the best constant that work for all irrational numbers. Indeed, for
 $\alpha=\frac{1+ \sqrt{5}}{2}$, the \textit{gold number}, Markov and  Hurwitz also proved that, for every $\epsilon > 0, |\alpha - \frac{p}{q}|< \frac{1}{(\sqrt{5} + \epsilon).q^2}$ has a finite number of solutions in $\QQ$. Searching for better results for a fixed $\alpha \in \re\setminus \QQ$
we are lead to define
$$
k(\alpha)=\sup\{ k > 0: |\alpha - p/q| < 1/(k\, q^2) \, \mbox{has infinitely many rational solutions }\, p/q\}.
$$
Note that the results by Markov and Hurwitz imply that $k(\alpha) \geq \sqrt{5}$ for all $\alpha\in \RR\setminus \QQ$, and $k(\frac{1+ \sqrt{5}}{2})=\sqrt{5}.$ It can be proved that $k(\alpha) =\infty$ for almost every $\alpha \in \re \setminus\QQ.$ 

We are interested in $\alpha\in \re\setminus \QQ$ such that $k(\alpha)< \infty$ (which form a set of Hausdorff dimension $1$).
\begin{Defi}
The Lagrange spectrum  $\cL$ is the image of the map $k$:\\
 $$\cL=\{ k(\alpha), \alpha \in \re\setminus \QQ,\,\,\mbox{ and }\,\, k(\alpha)<\infty\}.$$
\end{Defi}

In $1921$, Perron gave an alternative 
expression for the map $k$, as below. 
Write $\alpha$ in  continued fractions: $\alpha=[a_0,a_1,a_2,\cdots]$. For each $n\in \NN$, define:

$$
\alpha_n=[a_n,a_{n+1}, a_{n+2}, \cdots] \qquad
\beta_n=[0, a_{n-1}, a_{n-2}, \cdots].$$

Then

\begin{equation}\label{Lagrange}
k(\alpha)= \limsup_{n\to\infty}(\alpha_n + \beta_n).
\end{equation}

For a proof of equation (\ref{Lagrange}) see, for instance, \cite[Proposition 21]{CM}.

Markov proved (\cite{Markov}) that the initial part of the Lagrange spectrum is discrete:
$\cL \cap (-\infty, 3)=
\{ k_1=\sqrt{5} < k_2=2 \sqrt{2} < k_3=\frac{\sqrt{221}}{5}< \cdots\}$
with $k_n\to 3, k_n^2 \in \QQ,$ for all $n$.

In $1947$, Hall proved (\cite{Hall}) that the regular Cantor set $C(4)$ of the real numbers in $[0,1]$ in whose continued fraction only appear coefficients $1, 2, 3,4$ satisfies
$C(4) + C(4) = [\sqrt{2} -1, \, 4 (\sqrt{2} - 1)].$
Using the expression
(\ref{Lagrange}) and this result by Hall 
it follows that $[6, \infty) \subset \cL$. That is, the Lagrange spectrum contains a whole 
half line, nowadays called a {\em{Hall's ray}}.

Here we point out $\Lambda = C(4) \times C(4)$ is a horseshoe for a local diffeomorphism related to the Gauss map, which  has Hausdorff dimension $HD(\Lambda)>1$.  Hall's result says that its image $f(\Lambda)=C(4) + C(4)$ under the projection $f(x,y)=x+y$ contains an interval. This  is a key point to get nonempty interior  in $\cL$. 
In $1975$, Freiman  proved (\cite{Freiman}) some difficult results showing that the arithmetic sum of certain (regular) Cantor sets, related to continued fractions, contain intervals, and used them to determine the precise beginning of Hall's ray (the biggest half-line contained in $\cL$) which is
$$
\frac{2221564096\, +\, 283748\, \sqrt{462}}{491993569} \cong 4, 52782956616\cdots 
$$

Another interesting set related to Diophantine approximations is the classical \textit{Markov spectrum} defined by 
\begin{equation*}\label{SMClassic}
\mathcal{M}=\left\{\left(\inf_{(x,y)\in \mathbb{Z}^{2}\setminus(0,0)}|f(x,y)|\right)^{-1}:f(x,y)=ax^2+bxy+cy^2 \ \text{with} \ b^2-4ac=1\right\}.\end{equation*}

Notably the Lagrange and Markov spectrum have a 
{\em{dynamical interpretation.}} 
Indeed, 
the expression of the map $k(\alpha)$ in terms of the continued fraction 
expression of $\alpha$ given in (\ref{Lagrange}) allows to characterize  the Lagrange and Markov spectrum in terms of a shift map in a proper space.
Let $\Sigma=(\NN^*)^\ZZ$ be the set of bi-infinite sequences of integer numbers and 
consider the shift map
$\sigma:\Sigma\to \Sigma, \quad \sigma((a_n)_n)=(a_{n+1})_n$ and
define
$$
f: \Sigma\to \re, \quad f((a_n)_n)=\alpha_0 + \beta_0,$$
where $\alpha_0=[a_0,a_{1}, a_{2}, \cdots]$ and $\beta_0=[0, a_{-1}, a_{-2}, \cdots]$.\\
The Lagrange and the Markov spectrum are characterized as (cf. \cite{CF} for more details)

$$\cL=\{\limsup_k f(\sigma^k((a_n)_n), \, (a_n)_n \in \Sigma\}, \quad
\cM=\{\sup_k f(\sigma^k((a_n)_n), \, (a_n)_n \in \Sigma\}.
$$

\noindent These characterizations lead naturally to a natural extension of these concepts to the context of  dynamical systems.\\
For our purposes, let's consider a more general definition of the Lagrange and Markov spectra. Let $M$ be a smooth manifold, $T=\mathbb{Z}$ or $\mathbb{R}$, and $\phi=(\phi^t)_{t\in T}$ be a discrete-time ($T=\mathbb{Z}$) or continuous-time ($T=\mathbb{R}$) smooth dynamical system on $M$, that is, $\phi^t:M\to M$ are smooth diffeomorphisms, $\phi^0=\textrm{id}$, and $\phi^t\circ\phi^s=\phi^{t+s}$ for all $t,s\in T$.   

Given a compact invariant subset $\Lambda\subset M$ and a function $f:M\to\mathbb{R}$, we define the \emph{dynamical Markov, resp. Lagrange, spectrum} $M(\phi, \Lambda, f)$, resp. $L(\phi, \Lambda, f)$ as 
$$M({\phi, \Lambda, f})=\{m_{\phi, f}(x): x\in\Lambda\}, \quad \textrm{resp.} \quad L({\phi, \Lambda, f})=\{\ell_{\phi, f}(x): x\in\Lambda\}$$
where  
$$m_{\phi, f}(x):=\sup\limits_{t\in T} f(\phi^t(x)), \quad \textrm{resp.} \quad \ell_{\phi, f}(x):=\limsup\limits_{t\to+\infty} f(\phi^t(x)).$$
It can be proved that $L({\phi, \Lambda, f})\subset M({\phi, \Lambda, f})$ (cf. \cite{RM}). In the discrete case, we refer to \cite{RM}, where it was proved that for typical hyperbolic dynamics (with Hausdorff dimension greater than $1$), the Lagrange and Markov dynamical spectra have non-empty interior for typical functions. \\
\indent Moreira and Roma\~na also proved that Markov and Lagrange dynamical spectra associated to generic Anosov flows (including generic geodesic flows of surfaces of negative curvature) typically have nonempty interior (see \cite{RM2} and \cite{R} for more details).

Now we are ready to state our next result. Let $X_0$ be the vector field that defines a geometric Lorenz attractor $\Lambda$ and $U$ an open neighborhood of $\Lambda$ where $X_0$ is defined. 

\begin{maintheorem}\label{T2}
Let $\Lambda$ be the geometric Lorenz attractor associated to  ${X}_0^{t}$. Then
arbitrarily close to ${X^t_0}$, there are a flow ${X}^{t}$ and a neighborhood $\mathcal{W}$ of ${X}^{t}$ such that, if $\Lambda_{Y}$ denotes the geometric Lorenz attractor associated to $Y\in \mathcal{W}$, there is an open and dense set $\mathcal{H}_{Y}\subset C^{1}(U,\re)$ such that for all $f\in \mathcal{H}_{Y}$, we have 
\begin{equation*}
\interior(L(Y,\Lambda_Y, f))\neq \emptyset, \quad \interior(M(Y,\Lambda_Y, f))\neq \emptyset
\end{equation*}
where $int \, (A)$ denotes the interior of $A$.
\end{maintheorem}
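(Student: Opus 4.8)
\textbf{Proof plan for Theorem \ref{T2}.} The plan is to reduce the statement about the flow $Y$ to a statement about the one-dimensional map induced on the space of leaves and its two-dimensional thickening on the cross-section, and then to apply the Moreira--Roman\~a machinery for dynamical spectra of horseshoes with Hausdorff dimension $>1$. First I would recall that a geometric Lorenz attractor $\Lambda$ is the suspension of the first return map $P$ on a cross-section $\Sigma$, and that $P$ is a skew product over a one-dimensional expanding map $f$ with uniformly contracting invariant foliation. By Theorem \ref{L.Principal} there is a nested increasing family of regular Cantor sets $C_k$ for $f$ with $HD(C_k)\to 1$; pulling these back through the skew product structure produces, for each $k$, a horseshoe $\Lambda_k\subset\Sigma$ for $P$ (a hyperbolic basic set obtained as the maximal invariant set of a finite collection of sub-rectangles coded by an admissible subshift), whose stable Cantor sets are the $C_k$ (up to the contracting foliation) and whose unstable Cantor sets are fat because the contraction is strong; hence $HD(\Lambda_k)\to 1$ as well, and in particular we can fix $k$ large so that $HD(\Lambda_k)>1$. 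Suspending $\Lambda_k$ by the return time gives a compact invariant hyperbolic set $\Lambda_k^{\flat}\subset\Lambda$ for the flow $X_0^t$ with $HD(\Lambda_k^{\flat})>2$, but more importantly with $HD$ of the relevant transverse Cantor sets in general position exceeding $1$.

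\medskip

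Next I would invoke the perturbative step. The family $C_k$ is robust: the construction in Theorem \ref{L.Principal} only uses a finite piece of combinatorics of $f$, so there is a $C^1$-open set of nearby one-dimensional maps (hence, unfolding the skew product, a $C^1$-open set $\mathcal{W}$ of vector fields $Y$ near a suitably chosen $X^t$ arbitrarily close to $X_0^t$) for which the continuation $\Lambda_{k,Y}^{\flat}$ persists as a horseshoe inside $\Lambda_Y$ with $HD>1$ in the transverse direction. The choice of the intermediate flow $X^t$ (rather than $X_0^t$ itself) is exactly what buys openness of $\mathcal{W}$: one perturbs once to put the horseshoe in a robustly ``fat'' and generic position, and then every $Y$ in a whole neighbourhood inherits it. Here I would cite the known fact (e.g. the structure theory of geometric Lorenz flows, as in section \ref{CGM}) that the skew-product form is itself $C^1$-robust under the perturbations considered, so that $\Lambda_Y$ really is again a geometric Lorenz attractor and contains $\Lambda_{k,Y}^{\flat}$.

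\medskip

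Then comes the core analytic input: for a fixed flow $Y\in\mathcal{W}$ and the horseshoe $\Lambda_{k,Y}^{\flat}$ with transverse Hausdorff dimension $>1$, the dynamical Lagrange and Markov spectra $L(Y,\Lambda_{k,Y}^{\flat},f)$ and $M(Y,\Lambda_{k,Y}^{\flat},f)$ have non-empty interior for an open and dense set of $f\in C^1(U,\RR)$. This is precisely the content of the Moreira--Roman\~a theorems on Lagrange and Markov dynamical spectra for flows (the results cited as \cite{RM2}, \cite{R} for Anosov flows), whose proof mechanism --- showing that for generic $f$ the image under $f$ of a product of two Cantor sets of total dimension $>1$ contains an interval, via the Moreira--Yoccoz theory of stable intersections / the scale recurrence lemma and Marstrand-type projection arguments --- applies verbatim to a hyperbolic basic set of a smooth flow in dimension three, since $\Lambda_{k,Y}^{\flat}$ is a $C^1$ (indeed $C^{1+\alpha}$, which is what the fractal-geometry arguments need) horseshoe for the flow with locally maximal hyperbolic structure. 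Define $\mathcal{H}_Y$ to be this open dense set. Finally, since $\Lambda_{k,Y}^{\flat}\subset\Lambda_Y$, we have the monotonicity $L(Y,\Lambda_{k,Y}^{\flat},f)\subset L(Y,\Lambda_Y,f)$ and likewise for $M$ (restricting the orbit over which we take $\limsup$ or $\sup$ to a smaller invariant set only shrinks the spectrum), so non-empty interior of the former forces non-empty interior of the latter, which is the assertion.

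\medskip

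The main obstacle, and the step I would spend the most care on, is the second one: verifying that Theorem \ref{L.Principal}'s Cantor sets can be organized into a genuine \emph{hyperbolic horseshoe} for the two-dimensional return map $P$ that is moreover $C^{1}$-robust, i.e.\ survives in an \emph{open} family of vector fields. The one-dimensional statement is about an expanding map on an interval and says nothing a priori about horizontal alignment of the rectangles defining the would-be horseshoe for $P$, nor about the regularity ($C^{1+\alpha}$) needed to run the Moreira--Yoccoz machinery; one has to use the explicit skew-product normal form of the geometric Lorenz return map (strong contraction along leaves, the affine-like action in the leaf direction) to check the cone conditions and distortion bounds, and to check that these conditions are preserved under $C^1$-small perturbation of $X_0^t$. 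The remaining ingredients --- monotonicity of the spectrum under inclusion of invariant sets, and the black-box application of the flow version of the Moreira--Roman\~a theorem --- are comparatively routine once $\Lambda_{k,Y}^{\flat}$ is in place.
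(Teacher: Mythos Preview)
Your overall strategy --- find a horseshoe of dimension $>1$ inside the attractor via Theorem~\ref{L.Principal}, pass to a robust open set of flows, invoke Moreira--Roma\~na, and use monotonicity of the spectra under inclusion --- is correct and matches the paper's architecture. But the route you take through the analytic core is different from the paper's, and the difference matters.

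You propose to apply the \emph{flow} version of the Moreira--Roma\~na theorem (the results of \cite{RM2}, \cite{R}) directly to the suspended horseshoe $\Lambda_{k,Y}^{\flat}$ as a black box. The paper does not do this: it instead applies the \emph{discrete} main theorem of \cite{RM} to the horseshoe $\Lambda_P^k$ of the Poincar\'e map $P$, and then spends the real effort bridging back to the flow. Two ingredients are needed for that bridge, and neither appears in your plan. First, the discrete theorem in \cite{RM} requires a perturbation of the \emph{diffeomorphism} $P$, so the paper proves a realization lemma (Lemma~\ref{All-Pert}) showing that any $C^2$-small perturbation of $P$ near $\Lambda_P^k$ arises as the Poincar\'e map of a $C^2$-small perturbation of the flow; this is what produces the intermediate flow $X^t$ and the neighbourhood $\mathcal{W}$. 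Second, to pass from functions $F\in C^2(U,\RR)$ on the ambient manifold to functions on the cross-section, the paper introduces $\max F_Y(x)=\max_{0\le t\le t_+(x)}F(Y^t(x))$ and observes $L(Y,\Lambda^k,F)=L(P_Y,\Lambda_{P_Y}^k,\max F_Y)$; the set $\mathcal{H}_Y$ is then \emph{defined} as those $F$ for which $\max F_Y$ lands in the good set $H_1(P_Y,\Lambda_{P_Y}^k)$ of \cite{RM}, and a separate lemma (Lemma~\ref{set of functions}) shows this is open and dense. Your route sidesteps both steps by assuming the flow theorem applies verbatim to an arbitrary hyperbolic basic set of a $3$-flow; that is plausible, but \cite{RM2} and \cite{R} are stated for geodesic and Anosov flows, so you would be citing a generalization rather than the theorems as written --- which is precisely why the paper takes the longer discrete route.

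Two smaller points. You have the stable and unstable roles reversed: $C_k$ comes from the expanding one-dimensional map $f$, so it is the \emph{unstable} Cantor set of $\Lambda_P^k$, while the stable Cantor set comes from the contracting foliation (see the proof of Corollary~\ref{Dim. of bi-dim attractor}). And the ``main obstacle'' you flag --- organizing the $C_k$ into a genuine horseshoe for $P$ --- is actually dispatched quickly in the paper via the skew-product structure (equation~\eqref{E1SBA} and Proposition~\ref{Prop.2}); the genuine work is the realization lemma and the $\max F$ regularity.
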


\subsection{Organization of the text}
This paper is organized as follows. In Section \ref{CGM}, we describe informally the construction of a geometric Lorenz attractor and announce
 the main proprieties used in the text.
 In Section \ref{FCSf} we prove the first main result in this paper, theorem \ref{L.Principal}
 and its consequences, Corollary \ref{Dim. of bi-dim attractor} and  theorem  \ref{T1}.
 In Section \ref{sec-spectra} we proof our last result, theorem \ref{T2}.

\section{Preliminary results: geometrical Lorenz Model}\label{CGM}
In this section we present informally the construction of
 the geometric Lorenz attractor, following 
  \cite{GP,AP}, where the interested reader can find a detailed exposition of this construction.
 
 Let $(\dot x, \dot y, \dot
z)=(\lambda_1 x,\lambda_2 y, \lambda_3 z)$ be a vector field in the cube $[-1,1]^3$, with a singularity at the origin $(0,0,0)$. 
Suppose the eigenvalues $\lambda_i$, $1\le i\le 3,$ satisfy the relations
\begin{equation}
 \label{eq:eigenvalues}
0<-\lambda_3 < \lambda_1<-\lambda_2,\qquad 0<\alpha=-\frac{\lambda_3}{\lambda_1}<1<\beta=-\frac{\lambda_2}{\lambda_1}\,.
\end{equation}

 Consider $S=\{(x,y,1): |x|\leq 1/2, |y|\leq 1/2\}$ and
  $S^{-}=\{(x,y,1) \in S:  x < 0\}, S^{+}=\{(x,y,1) \in S :  x >0\}$ and $S^\star=S\setminus \Gamma$,
  with $\Gamma=\{(x,y,1)\in S : x=0\}$.
 
  Assume that $S$ is a transverse section to the flow so that every trajectory eventually crosses $S$ in the direction of the negative $z$ axis as in Fig. \ref{nearorigin}. Consider also  $\widetilde{\Sigma}^\pm=\{(x,y,z) : x=\pm 1\}$ and put 
 $\Sigma:=\widetilde{\Sigma}^- \cup \widetilde{\Sigma}^+=\{(x,y,z): |x|=1\}$. 
   For each $(x_0,y_0,1) \in S^\star$ the time $\tau$ such that $X^\tau(x_0,y_0,1) \in \Sigma$ is given by $\tau(x_0)=
  -\frac{1}{\lambda_1}\log(|x_0|)$, which depends on $x_0 \in S^\star$ only and is such that $\tau(x_0) \to + \infty$ when $x_0\to 0$.
Hence we get (where $\sgn(x)= \frac{x}{|x|}$ for $x \neq 0)$  
 $$ X^\tau(x_0,y_0,1)=(\sgn(x_0), y_0 e^{\lambda_2 \tau(x_0)}, e^{\lambda_3 \tau(x_0)})=
 (\sgn(x_0), y_0 |x_0|^{-\frac{\lambda_2}{\lambda_1}}, 
 |x_0|^{-\frac{\lambda_3}{\lambda_1}}).
 $$ 
Let $L: S^\star \to \Sigma$ be given by
\begin{equation}\label{eq:cross_sec}
{L}(x,y,1)=(\sgn(x),yx^\beta,x^\alpha)
\end{equation}
It is easy to see that $L(S^\pm)$ has the shape of a triangle without the vertex $(\pm 1,0,0)$, which are cusps points of the boundary of each of these sets.
 From now on we denote by $\Sigma^\pm$ the closure of $L(S^\star)$. Note that each line segment $S^\star \cap \{x=x_0\}$ is taken to
another line segment $\Sigma \cap \{z=z_0\}$ as sketched in Fig. \ref{nearorigin}.
 Outside the cube, to imitate the random turns of a regular orbit around the origin and obtain
a butterfly shape for our flow,  we let the flow return to the cross section $S$ through a
flow described by a suitable composition of a rotation $R_\pm$, an expansion $E_{\pm\theta}$ and a translation $T_\pm$.
Note that these transformations take line segments 
$\Sigma^\pm \cap \{z=z_0\}$ into line segments $S\cap \{x=x_1\}$ as shown in Figure \ref{nearorigin}, and so does
the composition $T_\pm \circ E_{\pm\theta} \circ R_\pm.$ This composition of linear maps describes a vector field $Y$ in a region outside $[-1,1]^3$, such that the time one map of the associated flow realizes $T_\pm \circ E_{\pm\theta} \circ R_\pm$ as a map
$\Sigma^\pm \to S$.
We note that the flow on the attractor we are constructing will pass though the region between $\Sigma^\pm$ and $S$ in a relatively small time with respect the linearized region.
\begin{figure}\label{nearorigin}
\centering
\includegraphics[scale=0.4]{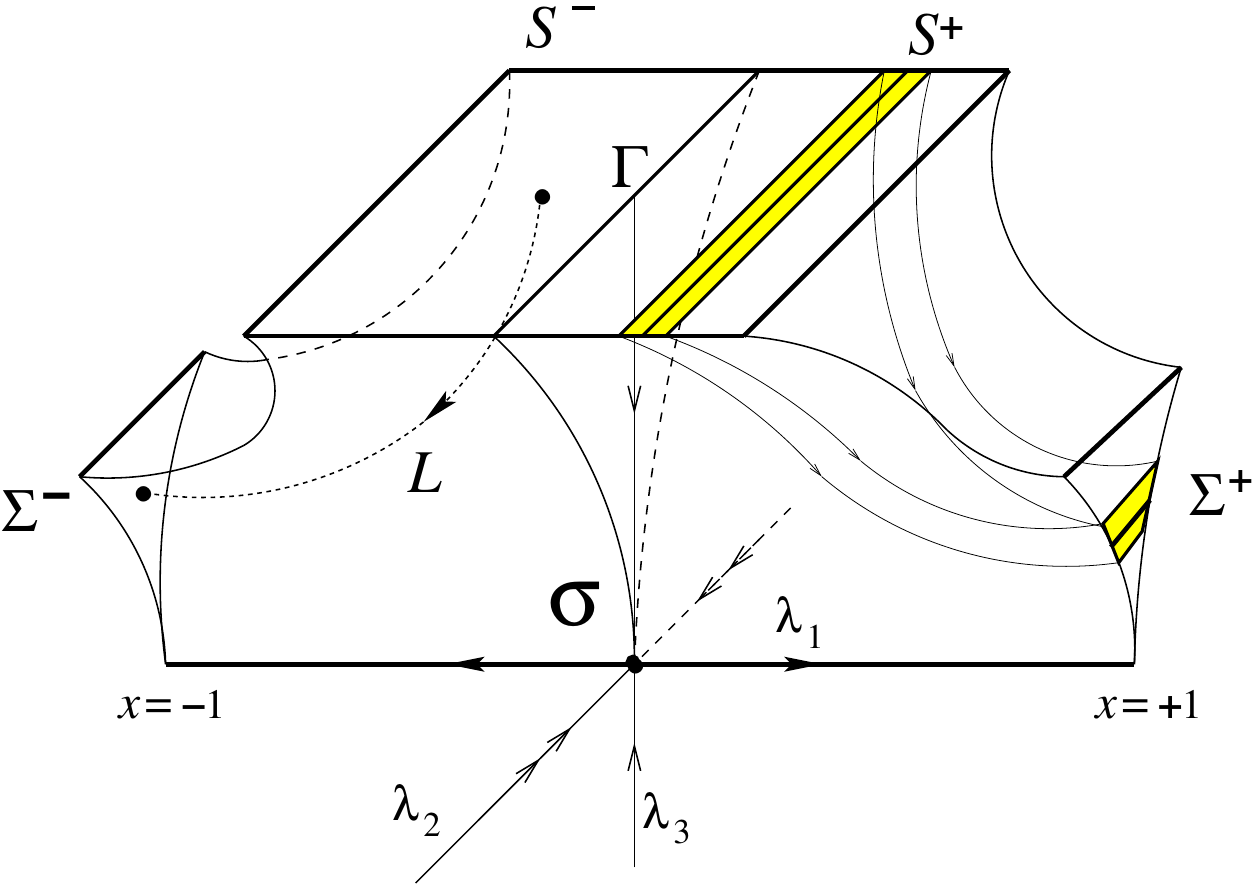}
\hspace{0,5cm}
\includegraphics[scale=0.4]{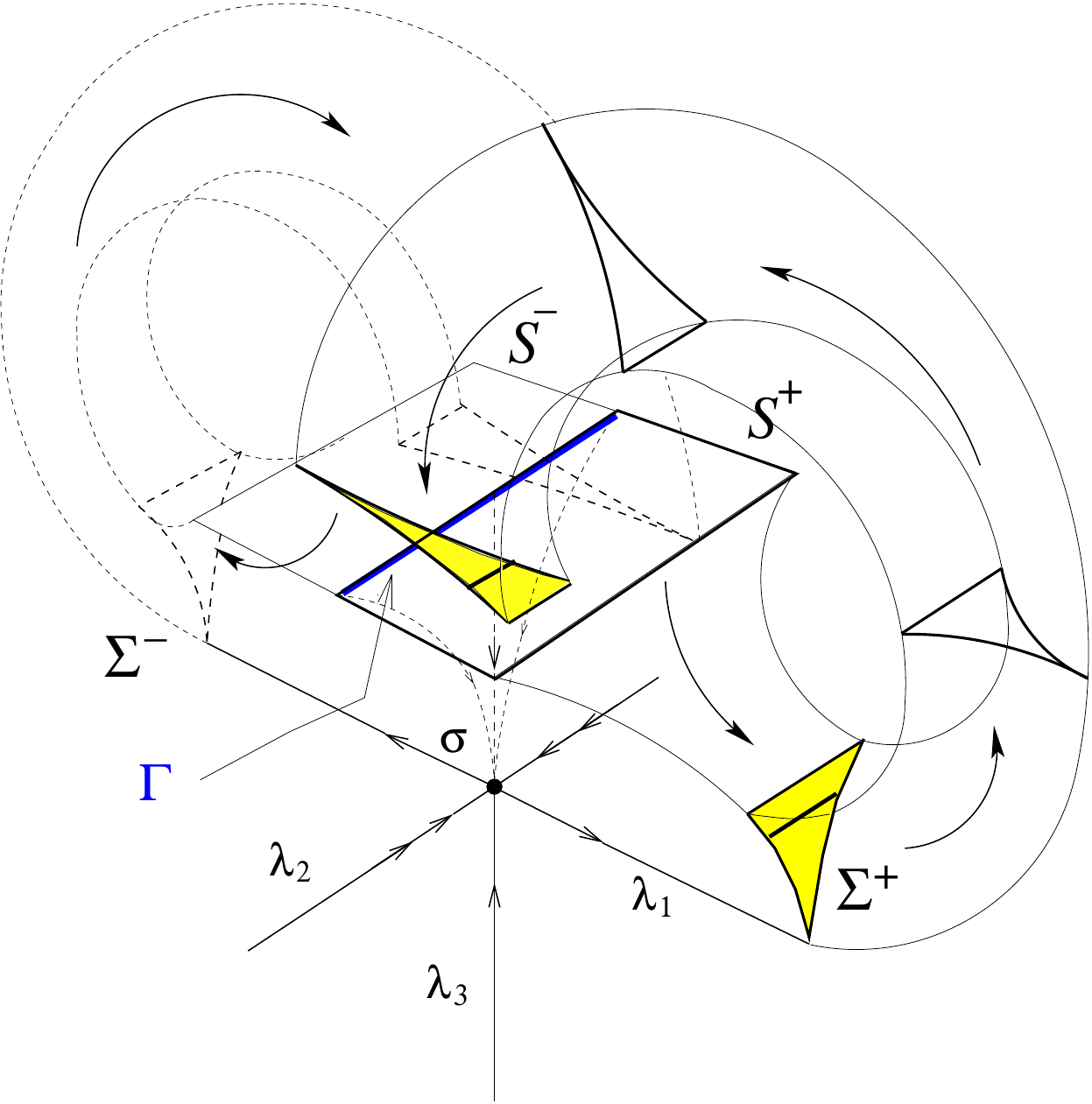}
\caption{Behavior near the origin}
\end{figure}
The above construction enable us to describe for $t\in \RR^+$, the orbit $X^t(x)$ for all $x\in S$: the orbit starts following the linear flow $L$ until $\Sigma^\pm$ and then it will follow $Y$ coming back to $S$ and so on. Now observe that $\Gamma = \{(x,y,1 \in S : x=0\} \subset W^s((0,0,0))$ and so
 the orbit of all $x \in \Gamma$ converges to $(0,0,0)$. 
Let us denote by $W = \{X^{t}(x)\colon x\in S \, ; \, t\in \re^+\}$ the set where this flow acts. The geometric Lorenz flow is the couple $(W, X^{t})$ and 
the geometric Lorenz attractor is the set

\begin{equation}\label{atrator}\Lambda=\overline{\bigcap_{t\geq 0}X^{t}(\Lambda_P)}, \,\, \text{where} \,\,
\Lambda_{P}=\overline{\bigcap_{i\geq 1}P^{i}(S^\star)},
\end{equation}
onde $P:S^\star \to S$ is the Poincar\'e map.

Composing the expression in \eqref{eq:cross_sec} with $R_\pm$,$E_{\pm\theta}$ and $T_\pm$
and taking into account that points in $\Gamma$ are contained in $W^s((0,0,0))$, we  can write an explicit formula for the Poincar\'e map $P$ by 
\begin{equation*}\label{e:Rovelladimension2}
P(x,y)=(f(x),g(x,y))
\end{equation*}
$$f(x)=\left\{\begin{array}{l}
f_0(x^{\alpha}) \ \ \text{if} \ \ x>0\\
f_1(x^{\alpha}) \ \ \text{if} \ \ x<0
\end{array}\right.; \ \ \text{with} \ \ f_i=(-1)^{i}\theta\cdot x +b_i \ \ i=1,2;$$
\noindent and
$$g(x,y)=\left\{\begin{array}{l}
g_{0}(x^{\alpha},y\cdot x^{\beta}) \ \ \text{if} \ \ x>0\\
g_{1}(x^{\alpha},y\cdot x^{\beta}) \ \ \text{if} \ \ x<0
\end{array}\right. .$$

\noindent where $g_1\colon {L_1\times I} \to I$ and $g_{0}\colon {L_2\times I} \to I$ are suitable affine maps, with $L_{1}= [-1/2, 0)$ and $L_{2}=(0,1/2]$.

\subsection{Properties of the one-dimensional map $f$.}\label{ProOneMap} Here we specify the properties of the one-dimensional map $f$ described above:
\begin{itemize}
\item[(f1)] $f$ is discontinuous at $x=0$ with lateral limits $f(0^-)=\frac{1}{2}$ and $f(0^+) = -\frac{1}{2}$,
\item[(f2)] $f$ is differentiable on $I\setminus\{0\}$ and $f'(x) > \sqrt{2}$, where $I=[-1/2,1/2]$,
\item[(f3)] the lateral limits of $f'$  at $x=0$ are $f'(0^{-}) = +\infty=$ and $f' (0^{+} ) =+\infty$.
\end{itemize}

\begin{figure}[hbtp] \label{fig1}
\centering
\includegraphics[scale=0.33]{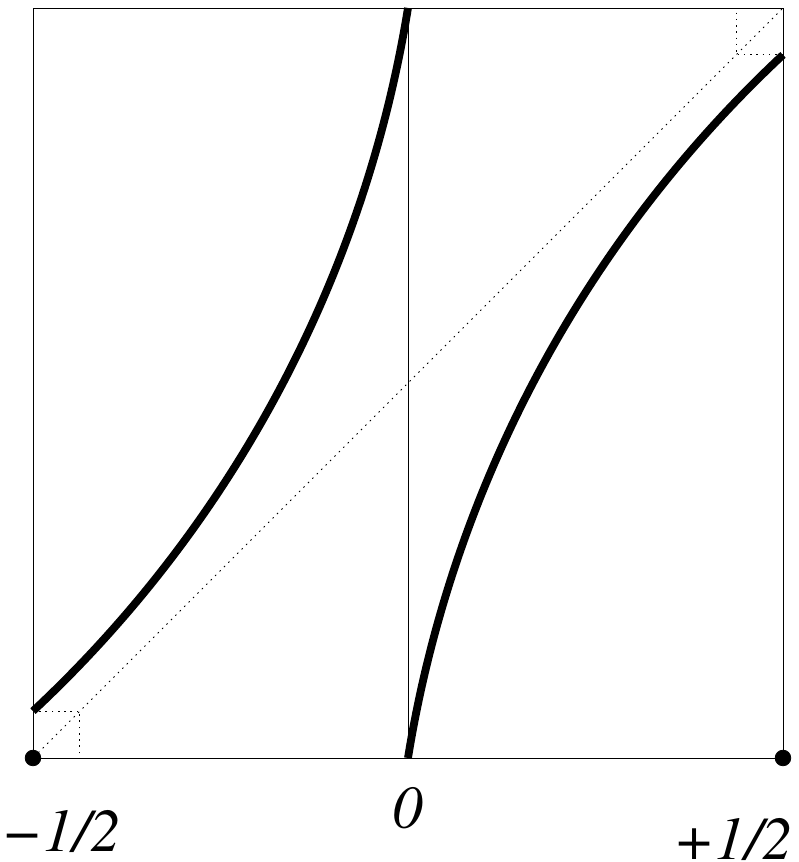} \hspace{0.8cm}
\includegraphics[scale=0.15]{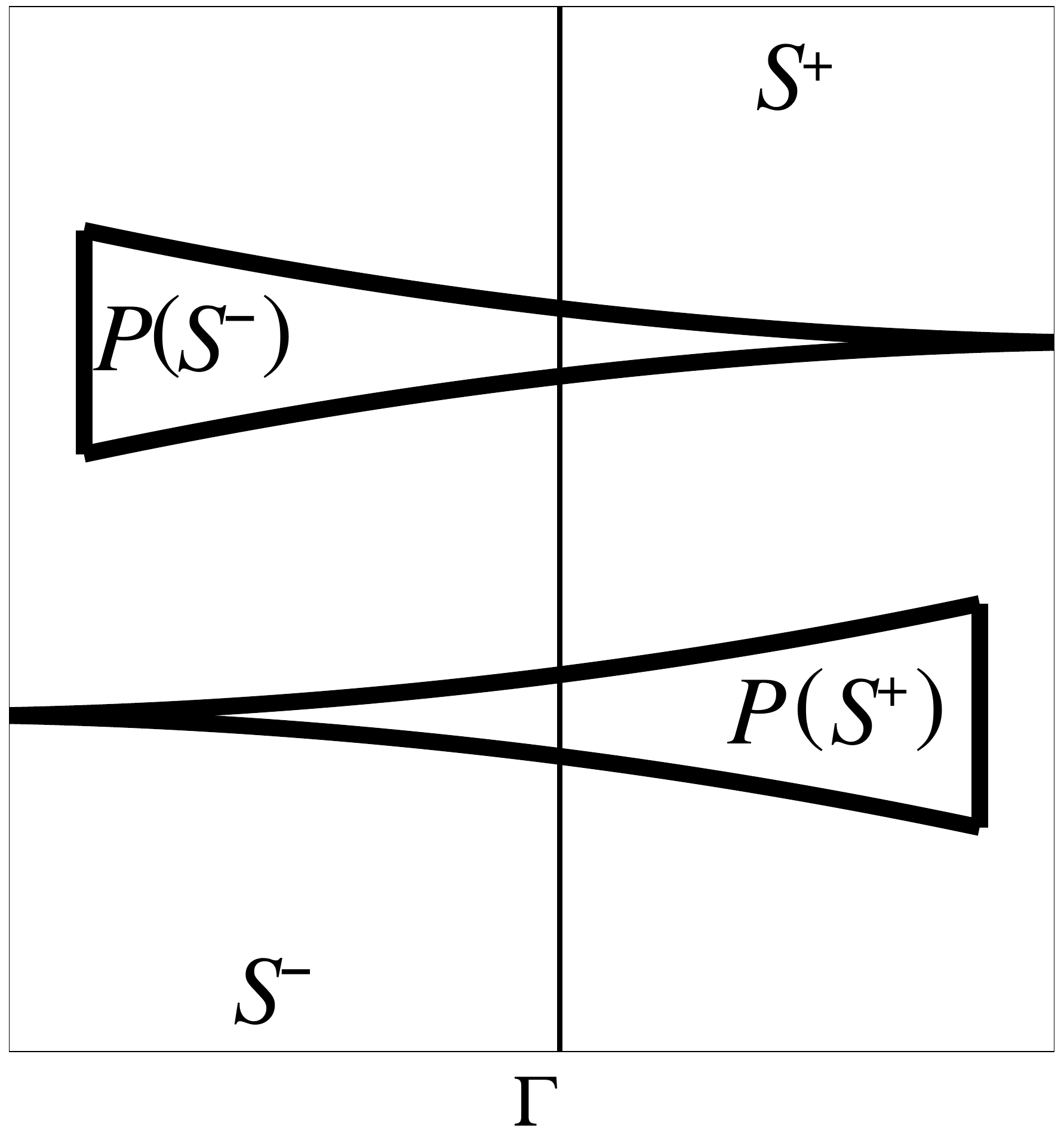}
\caption{The  $1$-dimensional Lorenz map $f$ and the image $P(S^*)$}
\end{figure}

The properties $(f1)$ - $(f3)$ above imply another  important features for the map $f$, as it is shown by the lemma \ref{LEO} below. We will present the proof of R. Willians (cf. \cite[Proposition 1]{Willians}) for the  lemma \ref{LEO}, which we will use  to construct ``almost locally eventually onto" avoiding the singularity $0$ of $f$ (cf. section  \ref{Sec-ALEO}). 
\begin{Le}\label{LEO} Put $I=[-\frac{1}{2},\frac{1}{2}]$. If $J\subset I $ is a subinterval, then there is an integer $n$ such that $f^n (J)=I$. That is, f is locally eventually onto.
\end{Le}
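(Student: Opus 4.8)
The plan is to show that $f$ \emph{expands the length of $J$ under iteration until the discontinuity point $0$ falls into the interior of some iterate, after which a single further application of $f$ already covers all of $I$}. Only properties (f1) and (f2) are needed. Since $f$ is discontinuous at (and not defined at) $0$, it is convenient to agree that for a subinterval $K\subset I$ the symbol $f(K)$ denotes the smallest closed subinterval of $I$ containing $\{f(x):x\in K\setminus\{0\}\}$; when $0\notin\interior(K)$ this is simply the closure of the image, and the case $0\in\interior(K)$ is only used at the last step. We assume $J$ nondegenerate (otherwise there is nothing to prove); since $f$ is strictly increasing, hence injective, on each of $[-\tfrac12,0)$ and $(0,\tfrac12]$, every $f^n(J)$ stays nondegenerate.

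First I would establish the expansion estimate: as long as $0\notin\interior(f^k(J))$, the set $f^k(J)$ is a closed subinterval of $I$ and
\[
|f^{k+1}(J)|\ \ge\ \sqrt{2}\,|f^k(J)|,
\]
where $|\cdot|$ denotes length. This goes by induction on $k$: $f^0(J)=J$ is a closed interval, and if $f^k(J)$ is a closed interval with $0$ not in its interior, then $f^k(J)\setminus\{0\}$ is an interval contained in $[-\tfrac12,0)$ or in $(0,\tfrac12]$, where $f$ is continuous and strictly increasing by (f2); hence $f^{k+1}(J)$ is again a closed interval, and integrating $f'>\sqrt2$ (using monotonicity, together with a harmless limiting argument when the endpoint $0$ is approached) gives the displayed inequality.

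Next, since $|f^k(J)|\le|I|=1$ for every $k$, the estimate forbids $0\notin\interior(f^k(J))$ from holding for all $k$: otherwise $|f^k(J)|\ge(\sqrt2)^k|J|\to\infty$. Let $n_0\ge0$ be the least index with $0\in\interior(f^{n_0}(J))$; by the previous step each $f^k(J)$ with $k\le n_0$ is a genuine closed interval, so $f^{n_0}(J)=[a,b]$ with $a<0<b$. On $[a,0)$ the map $f$ is increasing with $\lim_{x\to0^-}f(x)=\tfrac12$ by (f1), so $\sup\{f(x):x\in[a,0)\}=\tfrac12$, and on $(0,b]$ it is increasing with $\lim_{x\to0^+}f(x)=-\tfrac12$, so $\inf\{f(x):x\in(0,b]\}=-\tfrac12$. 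Hence the smallest closed interval containing $f(f^{n_0}(J)\setminus\{0\})$ is $[-\tfrac12,\tfrac12]=I$, that is $f^{n_0+1}(J)=I$, and $n=n_0+1$ works.

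The only genuinely delicate point --- and the reason for the convention above --- is the behaviour at the discontinuity $0$: the extreme values $\pm\tfrac12$ are attained by $f$ only as one-sided limits, which is why Step~1 must carry along the limiting argument near $0$, and why the final step needs $0$ to be an \emph{interior} point of $f^{n_0}(J)$, so that both one-sided limits are seen at once. If one prefers to work with literal images rather than closed interval hulls, then applying the same argument one or two more times to the component of $f^{n_0+1}(J)$ abutting $0$ disposes of the issue; everything else is the routine uniform-expansion bookkeeping of the first two steps.
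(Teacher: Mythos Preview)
Your expansion argument (Steps 1 and 2) is correct and indeed simpler than the paper's: by working with the closed hull you get a clean factor $\sqrt{2}$ per step and avoid the ``bigger half'' bookkeeping that forces the paper to use $\eta^2>2$ rather than $\eta>\sqrt{2}$. Up to the first moment $n_0$ at which $0$ lies in the interior of the hull, your hull genuinely coincides with the closure of the literal image, so the argument is on firm ground.

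The gap is in the final step. You conclude that the \emph{closed hull} of $f^{n_0+1}(J)$ is $I$, but this is strictly weaker than the closure of the literal image being $I$. Writing the hull at step $n_0$ as $[a,b]$ with $a<0<b$, the literal image at step $n_0+1$ is contained in $[-\tfrac12,f(b)]\cup[f(a),\tfrac12]$, and there is no reason to have $f(a)\le f(b)$: if $[a,b]$ is a short interval straddling $0$ (which happens precisely when $0$ enters the hull late, e.g.\ when the previous hull is a short interval around a preimage $0_1$ of $0$), then $f(a)$ is near $\tfrac12$ and $f(b)$ is near $-\tfrac12$, leaving a genuine gap $(f(b),f(a))$ in the middle of the image. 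Your last paragraph treats the discrepancy between hull and literal image as a pure endpoint issue, but it is not; the suggestion of ``applying the same argument one or two more times'' does not close this gap, since after the split one is forced to track pieces whose lengths are no longer controlled by a single geometric estimate.

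The paper's proof (which is Williams') sidesteps exactly this difficulty: by always passing to the \emph{larger half} when $0$ is hit, it keeps a single honest interval throughout, at the price of the weaker two-step expansion $|J_{i+2}|\ge(\eta^2/2)|J_i|$. The payoff is that once $0$ lies in two \emph{consecutive} images $f(J_{n-2})$ and $f(J_{n-1})$, the half $J_{n-1}$ already has $0$ as an endpoint, so $f(J_{n-1})$ reaches an endpoint of $I$ and $J_n$ is a full half of $I$; a short explicit check then finishes. To repair your argument you would effectively have to reintroduce this ``pick one half'' device after step $n_0$, which brings you back to the paper's scheme.
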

\begin{proof}
Let $J_0 = J$, if $0\notin J$; otherwise let $J_0$ be the bigger of the two intervals $0$
splits $J$ into. Similarly, for each $i$ such that $J_i$ is defined, set 
$$J_{i+1} = \begin{cases} f(J_i) &\mbox{if } 0\notin f(J_i) \\
\text{bigger of two parts $0$ splits $f(J_i)$ into}, & \mbox{if } 0\in f(J_i). \end{cases}$$

Note that $|f(J_{i+1})|>\eta |J_{i+1}|$, where $\eta = \inf |f'|>\sqrt{2}$ and $|\cdot|$ denotes length.
Thus unless $0$ is in both $f(J_i)$ and $f(J_{i+1})$ we have
$$|J_{i+2}|\geq \frac{\eta^2}{2}|{J_i}|.$$
But as $\eta^2 >2$, this last cannot always hold, say
$$0\in f(J_{n-2}) \ \ \text{and} \ \ 0\in f(J_{n-1}).$$
Then $f(J_{n-1})$  contains $0$ and one end point of $I$, so that $J_n$  is one ``half" of $I$. Note that $f(J_n)$
contains the other half, and finally $f^3(J_n)=I$.

\end{proof}
\noindent The next Lemma gives us  the following ergodic  property for $f$ as above (\cite[Corollary 3.4]{M.Viana}).
\begin{Le}\label{ACM}
Let $f \colon  [-1/2,1/2] \setminus \{0\}\to[-1/2,1/2]$ a $C^2$-function,  satisfying  properties
$(f1)$ - $(f3)$ at section $\ref{ProOneMap}$. Then $f$ has some absolutely  continuous invariant probability measure $($with respect to Lebesgue measure $m$\,$)$. Moreover, if $\mu$ is any such measure then $\mu=\varphi\, m$ where $\varphi$ has bounded variation. 
\end{Le}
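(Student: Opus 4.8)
The plan is to analyse the Perron--Frobenius (transfer) operator of $f$ on the space $BV=BV([-\tfrac12,\tfrac12])$ of functions of bounded variation, and to establish for it a Lasota--Yorke inequality. Write $f$ as two monotone $C^2$ branches $f|_{I_0}$, $f|_{I_1}$ with $I_0=[-\tfrac12,0)$ and $I_1=(0,\tfrac12]$, put $g:=1/|f'|$, and define $\mathcal{L}$ on $L^1(m)$ by $(\mathcal{L}\varphi)(x)=\sum_{y\in f^{-1}(x)}g(y)\varphi(y)$. Then $\mathcal{L}$ is a positive linear operator, $\int\mathcal{L}\varphi\,dm=\int\varphi\,dm$, and a probability $\mu=\varphi\,m$ is $f$-invariant if and only if $\mathcal{L}\varphi=\varphi$; so the existence statement reduces to producing a nonnegative fixed point of $\mathcal{L}$ lying in $BV$.

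First one records the two properties of $g$ on which everything rests: $\|g\|_\infty\le 1/\sqrt2<1$ (immediate from $(f2)$), and $g$ has bounded variation on the closure of each branch. The second point is exactly where the unbounded derivative must be handled: using the explicit form of $f$ coming from the geometric model --- the affine maps $f_0,f_1$ composed with the power map $x\mapsto x^{\alpha}$, $0<\alpha<1$ --- one sees that on each branch $g(x)=1/|f'(x)|$ is monotone and tends to $0$ as $x\to 0$, while away from $0$ the map is $C^2$ and hence $g$ is $C^1$; therefore $\operatorname{Var}_{\overline{I_j}}(g)<\infty$, and in particular $\mathcal{L}(BV)\subset BV$. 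The standard Lasota--Yorke estimate for a piecewise monotone map then gives a constant $C_0$ with
\[
\operatorname{Var}(\mathcal{L}\varphi)\le 2\|g\|_\infty\operatorname{Var}(\varphi)+C_0\|\varphi\|_{L^1},\qquad\varphi\in BV .
\]
Since $2\|g\|_\infty=\sqrt2>1$ one cannot iterate this directly; the fix is to replace $f$ by $F=f^N$. By the chain rule $1/|F'|$ is, on each monotonicity branch of $F$, a product of $N$ bounded $BV$ functions ($g$ composed with $f^0,\dots,f^{N-1}$), hence $BV$ on the closure of that branch, with $\|1/|F'|\|_\infty\le 2^{-N/2}$; choosing $N\ge 3$ gives $\operatorname{Var}(\mathcal{L}_F\varphi)\le\theta\operatorname{Var}(\varphi)+C_N\|\varphi\|_{L^1}$ with $\theta:=2^{\,1-N/2}<1$. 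Applying this repeatedly to $\mathbf 1$ bounds $\operatorname{Var}(\mathcal{L}_F^{\,n}\mathbf 1)$ uniformly in $n$, so the Ces\`aro averages $h_n=\tfrac1n\sum_{k<n}\mathcal{L}_F^{\,k}\mathbf 1$ are bounded in $BV$; a subsequence converges in $L^1$ (Helly) to a nonnegative $h$ with $\int h\,dm=1$, $\operatorname{Var}(h)<\infty$ and $\mathcal{L}_F h=h$. Then $\varphi_0:=\tfrac1N\sum_{j=0}^{N-1}\mathcal{L}_f^{\,j}h$ lies in $BV$, is nonnegative, integrates to $1$, and satisfies $\mathcal{L}_f\varphi_0=\varphi_0$; thus $\varphi_0\,m$ is the required absolutely continuous invariant probability with $BV$ density.

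For the ``moreover'' part, the Lasota--Yorke inequality makes $\mathcal{L}_F$ quasi-compact on $BV$ (Ionescu-Tulcea--Marinescu / Hennion), so its fixed subspace in $BV$ is finite dimensional, spanned by densities of finitely many mutually singular ergodic absolutely continuous invariant probabilities; and a structure argument for transfer operators of piecewise expanding interval maps (Rychlik) shows that every $\varphi\in L^1$ with $\mathcal{L}_f\varphi=\varphi$ in fact belongs to $BV$ --- concretely, the ergodic components of an absolutely continuous invariant measure of $f$ are, modulo null sets, finite unions of intervals on which the invariant density is $BV$. Hence the density of any absolutely continuous $f$-invariant probability has bounded variation. (Using Lemma \ref{LEO} one can moreover show this measure is unique, ergodic and equivalent to $m$, once the distortion of the iterates $f^N$ is controlled along orbits staying away from $0$.) The principal obstacle throughout is the unboundedness of $f'$ at the singularity: it destroys the usual distortion bounds, and one must exploit the precise power-law shape of $f$ near $0$, which is what makes $1/|f'|$ monotone and vanishing there, hence of bounded variation.
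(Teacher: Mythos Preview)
The paper does not prove this lemma; it is quoted verbatim from Viana's lecture notes \cite[Corollary~3.4]{M.Viana}. Your sketch via the Lasota--Yorke inequality for the transfer operator on $BV$, followed by Helly compactness for the Ces\`aro averages and the Ionescu-Tulcea--Marinescu/Rychlik spectral picture for the ``moreover'' clause, is exactly the standard argument and is essentially what that reference does, so you have reconstructed the proof the paper defers to.

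Two minor remarks. First, you write $2\|g\|_\infty=\sqrt2$, but $(f2)$ only yields $2\|g\|_\infty\le\sqrt2$; this is harmless since you pass to an iterate $f^N$ regardless. Second, you correctly flag that the bounded-variation property of $g=1/|f'|$ on each branch is not a consequence of the abstract hypotheses $(f1)$--$(f3)$ alone but of the explicit power-law shape $f'(x)\sim c|x|^{\alpha-1}$ coming from the geometric model (equivalently, of the estimate \eqref{E3S2}); this extra structure is indeed what the cited reference uses as well, so your reliance on it is appropriate in context.
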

\subsection{Properties of the map g}

By  definition $g$ is piecewise $C^{2}$ and  the following bounds on its partial derivatives hold:
\begin{enumerate}
\item[(a)] For all $(x,y)\in S^{\star}$ ($x\neq 0$), we have $|\partial_{y}g(x, y)| = |x|^{\beta}$. As $\beta>1$ and $|x| \leq 1/2$
there is $0 < \lambda < 1$ such that
$$|\partial_{y}g| < \lambda. $$
\item[(b)] For $(x, y)\in S^{\star}$ ($x\neq 0$), we have $\partial_{x}g(x, y) = \beta |x|^{\beta-\alpha}$. Since $\beta>\alpha$ and $|x|\leq  1/2$,
we get $|\partial_{x}g| <\infty$.
\end{enumerate}

We note that from the first item above it follows the uniform contraction of the foliation given by the lines $ S \cap \{x=constant\}.$ 
The foliation is contracting in the following sense: there is a constant $C > 0$ such that, for any given leaf $\gamma$ of
the foliation and for $y_1, \, y_2\, \in \gamma$, then
$$
\dist(P^n(y_1), P^n(y_2)) \leq C\, \lambda^n \, \dist(y_1, y_2)) \quad \mbox{as}
\quad n\to \infty.
$$

We notice that 
the geometric Lorenz attractor constructed above is robust, that is, it persists for all nearby vector fields.
More precisely: there exists a neighborhood $U$ in $\re^3$ containing the attracting set $\Lambda$,
such that for all vector fields Y which are $C^1$-close to $X$ the maximal invariant subset in $U$,
 $\Lambda_Y = \bigcap_{t\geq 0}Y^{t} (U)$, is still a transitive $Y$-invariant set.
This is a consequence of the domination of the contraction along the $y$-direction over the expansion along the $x$-direction (see e.g. \cite[Session 3.3.4]{AP}).
Moreover, for every $Y$ $C^1$-close to $X$, the associated Poincar\'e map preserves a
contracting foliation $\mathcal{F}_Y$ with $C^1$ leaves. 
It can be shown that the holonomies along the leaves are in fact H\"older-$C^1$ (see \cite{AP}). Moreover, if we
have a strong dissipative condition on the equilibrium $O$, that is, if $\beta > \alpha+k$ for some $k\in \mathbb{Z}^{+}$ (see the definitions of $\alpha,\, \beta$ as functions of the eigenvalues of $0$ in (\ref{eq:eigenvalues})), it can be show that $\mathcal{F}_Y$ is  a $C^k$ smooth foliation \cite{SV}, and so the holonomies along the leaves of $\mathcal{F}_Y$ are $C^k$ maps. In particular, for
strongly dissipative Lorenz attractors with $\beta > \alpha+k$ the one-dimensional quotient map is
$C^k$ smooth away from the singularity (cf. \cite{SV}).

We finish this section noting that putting together the observations above and the results proved in \cite[Section 3.3.4]{AP}, we easily deduce the following result:

\begin{Pro}\label{Proprobusto}
There is a neighborhood $\mathcal{U} \supset X$ such that for all $Y\in \mathcal{U}$, 
if $f_Y$ is the quotient map $f_Y \colon S^{\ast}/\mathcal{F}_Y \to S/\mathcal{F}_Y$  
associated to the corresponding Poincar\'e map $P_Y$, then the properties (f1)-(f3) from subsection \ref{ProOneMap} are still valid. Moreover, there are constants $C, C_1 > 1$ uniformly on a $C^2$ neighborhood of $X$ such that if $\alpha(Y)=-\frac{\lambda_3(Y)}{\lambda_{1}(Y)}$ 
is the continuation of $\alpha=- \frac{\lambda_3}{\lambda_1}$ obtained for the initial flow $X^t$ it holds
\begin{equation}\label{E3S2}
 \frac{1}{C}\leq \frac{Df_{Y}(x)}{|x|^{\alpha(Y)-1}}\leq C \ \ {\text{and} \ \  \frac{|Df^{2}_{Y}(x)|}{|x|^{\alpha(Y)-2}}\leq C_1}.
\end{equation}
Furthermore, the condition (f3) ensures that $f_Y$ has enough expansion to easily prove that every $f_Y$ is \textit{locally eventually onto} for all $Y$ close to $X$.
\end{Pro}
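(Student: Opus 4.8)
The plan is to track each of the ingredients (f1)--(f3) through the robustness mechanism for geometric Lorenz attractors and verify that the bounds in \eqref{E3S2} come from an explicit estimate on the derivative of the one-dimensional quotient map near the singularity. First I would recall that, by the construction in Section \ref{CGM}, the Poincar\'e map $P_Y$ of any $Y$ in a small $C^1$ neighborhood $\mathcal{U}$ of $X$ still preserves a contracting foliation $\mathcal{F}_Y$ with $C^1$ leaves, whose holonomies are H\"older-$C^1$; this is exactly the content of \cite[Section 3.3.4]{AP}, and it follows from the domination of the $y$-contraction over the $x$-expansion, a condition that is $C^1$-open. Hence the quotient map $f_Y\colon S^{\ast}/\mathcal{F}_Y\to S/\mathcal{F}_Y$ is well defined and $C^1$ away from the singular leaf $\Gamma_Y$. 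The discontinuity at the singular leaf and the two lateral limits landing at the two tips of $\Sigma^\pm$ (property (f1)) persist because $\Gamma_Y\subset W^s(O_Y)$ for the continuation $O_Y$ of the equilibrium, and the images of the two components $S^\pm/\mathcal{F}_Y$ are the continuations of the two triangular regions, so their boundary tips vary continuously and stay interior to $I$.

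Next I would address (f2) and (f3) simultaneously via the derivative estimate. Writing the quotient map as the composition of $x\mapsto x^{\alpha(Y)}$ coming from the linearized passage near $O_Y$ (as in \eqref{eq:cross_sec}, now with the continued eigenvalue ratio $\alpha(Y)$) with the smooth return diffeomorphism realized by a rotation--expansion--translation depending smoothly on $Y$, one gets $Df_Y(x)$ comparable to $|x|^{\alpha(Y)-1}$ up to a factor bounded away from $0$ and $\infty$; since $0<\alpha(Y)<1$ on $\mathcal{U}$ (again a $C^1$-open condition on the eigenvalues), this factor blows up as $x\to 0$, giving $Df_Y(0^\pm)=+\infty$, i.e. (f3). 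The constant $C>1$ controlling $\frac{1}{C}\le Df_Y(x)/|x|^{\alpha(Y)-1}\le C$ is uniform on a $C^2$ neighborhood because the return diffeomorphism and the eigenvalues depend continuously (indeed $C^2$-continuously) on $Y$, so one takes $\mathcal{U}$ small enough that all these quantities stay within fixed bounds; differentiating once more yields $Df^2_Y(x)$ comparable to $|x|^{\alpha(Y)-2}$ with a uniform constant $C_1$. From $Df_Y(x)\ge \frac{1}{C}|x|^{\alpha(Y)-1}$ and the fact that the return map has derivative bounded below, one recovers the uniform expansion $f_Y'>\sqrt 2$ of (f2) after possibly shrinking $\mathcal{U}$, using that away from the singularity $|x|^{\alpha(Y)-1}$ is already bounded below and near the singularity it is large. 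Finally, the locally-eventually-onto property follows by repeating the argument of Lemma \ref{LEO} verbatim with $\eta=\inf|f_Y'|>\sqrt 2$, which is all that proof used.

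The main obstacle I anticipate is not any single estimate but making the \emph{uniformity} of the constants $C,C_1$ honest: one must check that the implicit constant relating $Df_Y$ to $|x|^{\alpha(Y)-1}$ genuinely does not degenerate as $Y$ varies, which requires controlling simultaneously (i) the $C^2$ dependence of the linearizing coordinates near $O_Y$ — here one should either invoke a $C^2$ (or at least $C^{1,1}$) linearization theorem valid under the eigenvalue conditions \eqref{eq:eigenvalues}, or work directly with the normal form — and (ii) the $C^2$ dependence of the global return diffeomorphism $T_\pm\circ E_{\pm\theta}\circ R_\pm$ and of the holonomies of $\mathcal{F}_Y$. Granting the smoothness statements already cited from \cite{AP} and \cite{SV}, these are routine continuity-of-composition arguments, and the only point demanding care is that the exponent $\alpha(Y)$ stays strictly inside $(0,1)$ and bounded away from both endpoints on $\mathcal{U}$, which is immediate from continuity of eigenvalues. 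I would therefore present the proof as: (1) persistence of the contracting foliation and well-definedness of $f_Y$, citing \cite[Section 3.3.4]{AP}; (2) the explicit factorization of $f_Y$ near $\Gamma_Y$ and the resulting two-sided bound \eqref{E3S2}, with the uniformity argument; (3) deducing (f1)--(f3) and local-eventual-ontoness as corollaries.
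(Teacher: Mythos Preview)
The paper does not give a standalone proof of this proposition: it is stated with the preface ``putting together the observations above and the results proved in \cite[Section 3.3.4]{AP}, we easily deduce the following result,'' and no further argument is supplied. Your proposal is therefore not competing with a different proof but rather supplying the details the paper leaves to the reader, and the route you take---persistence of the contracting foliation from \cite[Section 3.3.4]{AP}, factoring $f_Y$ as $x\mapsto x^{\alpha(Y)}$ composed with a smooth return to derive \eqref{E3S2}, and repeating Lemma~\ref{LEO} for the locally-eventually-onto property---is exactly the chain of ``observations above'' the paper is alluding to. Your identification of the uniformity of $C,C_1$ (via continuous dependence of the linearization and of the return diffeomorphism on $Y$) as the only delicate point is accurate and matches the implicit reliance on $C^2$-robustness in the paper's discussion.
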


\subsection{Almost Locally Eventually Onto}\label{Sec-ALEO}
In this section we shall use an argument similar
to the one given in lemma \ref{LEO}, to achieve a property of $f$
 fundamental  for the construction of the family of Cantor sets 
in Theorem \ref{L.Principal}. Roughly speaking, we shall prove
the existence of a number $a$ arbitrarily close to one, depending only on $f$, such that for any interval $J\subset I$,  we have 
\begin{enumerate}
\item[(1)] an interval $J'\subset J$ such that $0 \notin J'$ and  with size equal to a
fixed proportion of the size of $J$. 
\item[(2)] a number $n=n(J)$ such that the restriction
$f^n|J'$, $f^n: J' \to L_1^a$ is a diffeomorphism, where $L_1^a=[f(1-a),0)$.  Moreover, we obtain a control on the distortion at each step  $f^{j}$, for all 
$1\leq j\leq n-1$.
\end{enumerate}
To do that, we start with an auxiliary result.

\begin{Le}\label{L1'-ALEO}
There is a constant $\kappa>0$ such that for all interval $J\subset I\setminus\{0\}$ such that $0\in f(J)$ and $0\in f^2(J)$, then 
$$|J|\geq \kappa.$$
\end{Le}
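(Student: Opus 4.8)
The plan is to exploit the bounded distortion estimate \eqref{E3S2} near the singularity together with the expansion bound $f'>\sqrt2$ away from it. The point is that in order for $0$ to belong to both $f(J)$ and $f^2(J)$, the interval $J$ must first be mapped close to the singularity (so that $f(J)$ straddles $0$), and then the image $f(J)$ — which itself straddles $0$ — must again be mapped so that $f^2(J)$ straddles $0$. Each of these ``returns to a neighborhood of $0$'' forces a definite lower bound on lengths, because $f$ expands, and the only way the images can keep containing $0$ is if $J$ was not too small to begin with.

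\begin{proof}[Proof sketch]
First I would make the following elementary observation about the branches of $f$. Since $f'(x)>\sqrt2$ on $I\setminus\{0\}$ and the two branches $f|_{(0,1/2]}$ and $f|_{[-1/2,0)}$ are each monotone and surjective onto $[-1/2,1/2)$ (this is the locally eventually onto mechanism, or can be read directly from the formula $f_i=(-1)^i\theta x^\alpha+b_i$ after checking the endpoint values from (f1)), each branch is a bijection onto an interval containing $0$ in its interior only if its domain is all of one side; in general, for a subinterval $J\subset I\setminus\{0\}$ with $0\in\interior(f(J))$, the interval $J$ must have one endpoint accumulating at $0$ — more precisely $J$ must contain, as an endpoint, a point $x$ with $f(x)=0$, i.e. $x=\pm b_i^{1/\alpha}/\theta^{1/\alpha}$ (a preimage of $0$ in the relevant branch). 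Let me call these two points $p_-<0<p_+$, the unique preimages of $0$ under the left and right branches respectively.

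Now suppose $J\subset I\setminus\{0\}$ satisfies $0\in f(J)$ and $0\in f^2(J)$. Since $0\in f(J)$ and $J$ is connected and avoids $0$, $J$ lies in one branch, say the right branch, and $p_+$ must be an endpoint of $J$ (or interior to $J$ — but $J$ avoids $0$, not $p_+$, so $p_+\in J$); write $J=[p_+,\,p_+ + \ell]$ (the other case is symmetric), so $|J|=\ell$. Because $f$ is monotone on $J$ and $f(p_+)=0$, the image $f(J)$ is an interval with $0$ as an endpoint, of length $|f(J)|$; to have $0\in\interior(f^2(J))$ we now need $f(J)$ to contain $p_+$ or $p_-$ in its interior, hence $|f(J)|\ge \min\{|p_+|,|p_-|\}=:c_0>0$. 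On the other hand, using the upper bound on $Df_Y$ (equivalently on $Df$) from \eqref{E3S2}, namely $Df(x)\le C|x|^{\alpha-1}$ with $0<\alpha<1$, I would integrate over $J=[p_+,p_++\ell]$ to get
$$|f(J)|=\int_{p_+}^{p_++\ell} f'(t)\,dt\le \int_{0}^{p_++\ell} C\,t^{\alpha-1}\,dt = \frac{C}{\alpha}\,(p_++\ell)^\alpha \le \frac{C}{\alpha}\,(|p_+|+\ell)^\alpha.$$
Combining with $|f(J)|\ge c_0$ gives $(|p_+|+\ell)^\alpha\ge \alpha c_0/C$, hence $\ell\ge (\alpha c_0/C)^{1/\alpha}-|p_+|$. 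This is only useful if the right-hand side is positive, which need not be the case, so this crude estimate must be refined: instead I integrate from $p_+$ and use that $t\mapsto t^{\alpha-1}$ is the relevant bound only near $0$, splitting $J$ into the part within a small fixed neighborhood of $0$ and the rest, where $f'$ is merely bounded below by $\sqrt2$ but bounded above by a constant. The clean way is: bound $|f(J)|\le \sup_{J}|f'|\cdot|J|$ when $J$ is bounded away from $0$, and bound $|f(J)|\le \frac{C}{\alpha}(\dist(J,0)+|J|)^\alpha$ when $J$ is near $0$; either way $|f(J)|\le \Psi(\dist(J,0),|J|)$ for an explicit continuous function $\Psi$ with $\Psi(\cdot,0)=0$. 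Since $\dist(J,0)=|p_+|$ is a fixed positive number, $|f(J)|\ge c_0$ forces $|J|\ge\kappa_1$ for some $\kappa_1>0$ depending only on $f$.

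Wait — there is a subtlety I glossed over: $p_+$ depends only on $f$, but I should double-check that the endpoint of $J$ really must be exactly $p_+$ and cannot be some other point. If $0\in f(J)$ and $J\subset$ right branch, then since $f$ is continuous and monotone on $J$, $0$ is between $f(\min J)$ and $f(\max J)$; since the right branch takes the value $0$ exactly at $p_+$, and is monotone, $p_+\in\overline{J}$, so up to replacing $J$ by its closure (which doesn't change the length) we may assume $p_+\in J$ as claimed, and then $\dist(J,0)$ could be as small as we like if $J$ is a tiny interval around $p_+$ close to $0$ — but $p_+$ itself is a fixed distance from $0$, namely $|p_+|$, so $\dist(J,0)\le|p_+|$ always, and that's the wrong direction. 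The correct lower bound comes from the \emph{second} condition: I need $|f(J)|\ge c_0$, and $f(J)$ is an interval with one endpoint $0$, so $f(J)\supset[0,c_0]$ or $[-c_0,0]$; since $f$ is monotone on $J$ with $f(p_+)=0$, the length of $f(J)$ equals $|f(\text{other endpoint of }J)|$. Then I apply the bounded-distortion/expansion estimate in the direction ``small domain $\Rightarrow$ small image'' — but here I want ``image of definite size $\Rightarrow$ domain of definite size'', which is exactly what the \emph{lower} bound $f'\ge\sqrt2$ does NOT give (that bounds image below by $\sqrt2\cdot$domain, useless), whereas the \emph{upper} bound $f'\le C|x|^{\alpha-1}$ does give: $c_0\le|f(J)|\le\int_J C|t|^{\alpha-1}dt\le\int_0^{|p_+|+|J|}Ct^{\alpha-1}dt=\frac{C}{\alpha}(|p_+|+|J|)^\alpha$, so $(|p_+|+|J|)^\alpha\ge\alpha c_0/C$; this is automatically satisfied when $|p_+|$ is already large enough, giving no information, OR — and this is the real content — if $|p_+|^\alpha<\alpha c_0/C$ then we get $|J|\ge(\alpha c_0/C)^{1/\alpha}-|p_+|>0$. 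In fact one can check $|p_+|^\alpha<\alpha c_0/C$ does hold because $c_0$ and $|p_+|$ are comparable while the factor $C/\alpha>1$; if not, one iterates the argument once more using $0\in f^2(J)$ is equivalent to the analogous statement for $f(J)$ in place of $J$, and composes the distortion bounds over the two steps via the second inequality in \eqref{E3S2}. Taking $\kappa$ to be the resulting constant (and the minimum over the left/right branch cases) completes the proof.
\end{proof}

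The main obstacle, as the discussion above reveals, is organizing the ``returns to the singularity'' bookkeeping correctly: one must be careful that the relevant estimate is the \emph{upper} bound $Df\le C|x|^{\alpha-1}$ (contrapositive of ``large image forces large domain''), not the expansion lower bound $f'>\sqrt2$, and one may need to chain the estimate over two iterates using $|Df^2(x)|\le C_1|x|^{\alpha-2}$ from \eqref{E3S2} rather than a single step, since a single crude application can be vacuous. Once the right inequality is used, producing the explicit $\kappa$ depending only on the branch data $b_i,\theta,\alpha$ (equivalently on $f$) is a routine computation.
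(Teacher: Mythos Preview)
Your proposal has a genuine gap, and it also misses a much simpler route that the paper takes.

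\textbf{The gap.} Your main estimate is $c_0\le |f(J)|\le \frac{C}{\alpha}(|p_+|+|J|)^\alpha$, from which you want to extract $|J|\ge (\alpha c_0/C)^{1/\alpha}-|p_+|$. For this to say anything you need the right-hand side to be positive, i.e.\ $|p_+|^\alpha<\alpha c_0/C$. You assert this holds ``because $c_0$ and $|p_+|$ are comparable while $C/\alpha>1$'', but in fact the inequality goes the \emph{wrong way}: since $c_0=\min(|p_+|,|p_-|)\le |p_+|<1$, $\alpha<1$ and $C>1$, one has $\alpha c_0/C<|p_+|<|p_+|^\alpha$. So your estimate is vacuous. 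Your fallback (``iterate once more via the $Df^2$ bound'') is not carried out, and note that $f(J)$ already contains $0$, so it does not lie in $I\setminus\{0\}$ and the same setup cannot simply be reapplied to $f(J)$. There are also smaller slips along the way: $p_+$ need not be an endpoint of $J$, so $f(J)$ need not have $0$ as an endpoint, and $\dist(J,0)$ is not $|p_+|$ but rather $\inf J$, which can be any value in $(0,|p_+|]$.

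\textbf{The paper's argument, and how close you were.} The paper's proof is two lines and uses no derivative estimates at all. You already isolate the two key facts: (i) since $0\in f(J)$ and $J$ lies in one branch, $J$ contains the unique preimage $0_i$ of $0$ in that branch; (ii) since $0\in f^2(J)$, the image $f(J)$ also contains some $0_j$. Now simply pull (ii) back: $J$ must contain a point $q$ in the same branch with $f(q)=0_j$, i.e.\ a \emph{second}-order preimage of $0$. Both $0_i$ and $q$ are specific points depending only on $f$, and $q\neq 0_i$ since $f(0_i)=0\neq 0_j$. Hence $|J|\ge |q-0_i|$. Taking $\kappa$ to be the minimum of the (four) possible such distances finishes the proof. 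No analysis of $|x|^{\alpha-1}$ is needed; the lemma is purely a statement about the finite set of first- and second-order preimages of $0$.

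If you insist on an analytic proof, it can be repaired by a case split on $a:=\inf J$: if $a<|p_+|/2$ then $|J|\ge |p_+|-a>|p_+|/2$; if $a\ge |p_+|/2$ then $\sup_J|f'|\le C(|p_+|/2)^{\alpha-1}=:M$ and $|J|\ge |f(J)|/M\ge c_0/M$. But this is still more work than the paper's direct observation.
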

\begin{proof}
We denote $0_1\in [-\frac{1}{2},0]$ and $0_2\in[0,\frac{1}{2}]$ the preimage of $0$ in each branch of $f$, that is, $f(0_i)=0,\, i\in \{1,2\}$. Consider also the two preimages of $0_i$ $0_i^1, 0_i^2$, $i=1,2$ in $[-\frac{1}{2},0]$ and $[0,\frac{1}{2}]$, respectively. 
As $0\in f(J)$ and $0\in f^2(J)$, then $0,0_i\in f(J)$ for some $i$, and thus we get  that some of the intervals $J_1=[0_1^1,0_1], \, J_2=[0_1,0_2^1], \, J_3=[0_1^2, 0_2], \ \text{and} \ J_4=[0_2,0_2^2]$ is contained in $J$.  Thus, taking $\kappa=\min \{|J_1|, |J_2|, |J_3|,|J_4|\}$ we finish the proof.
\end{proof}
Recall that $\eta^2>2$. Now we consider a number $0<a<1$ satisfying 
\begin{equation}\label{eq1'}
a^2\eta^{2}>2 \ \ \text{and} \ \ 1-a<\kappa.  
\end{equation}

For an interval $J\subset I$, we will use the number $a$ satisfying equation (\ref{eq1'}) to define an interval 
$\tilde{J}\subset J$  avoiding the singularity
$0$ and  obtained  cutting a small part of  $J$ with length $(1-a)|J|$.
In this direction, we proceed as follows.\\

Given any interval $J=(b,c)\subset I\setminus \{0\}$, we denote by $J_a$ the subinterval of $J$ cutting an interval of size  $(1-a)|J|$  on the  closest side to zero, that is, 
$$J_{a} = \begin{cases} (b, ac+ (1-a)b), &\mbox{if } c<0 \\
(ab+(1-a)c,c), & \mbox{if } b>0. \end{cases}$$
Note that, if $c=0$, then $J_a=(b, (1-a)b)$ and
if $b=0$ $J_a=((1-a)c,c)$.
It is clearly  that $|J_a|=a|J|$ and  $0\notin J_a$. \\

\noindent When an interval $J=(b,0)$ or $J=(0,c)$ has size large enough ($|J|>(1-a)$),  we  define the subinterval  $_aJ$ of $J$ cutting an interval of size $(1-a)$ of $J$ in the side of the point $0$, in other words, 
$$_aJ=(b, -(1-a)) \ \ \text{or} \ \ _aJ=((1-a),c).$$
It is clear that both kind of intervals, $J_a$ and $_aJ$,  avoid the singularity. \\ 

Recall that  $0_1\in (-\frac{1}{2},0)$ and $0_2\in (0,\frac{1}{2})$ are the preimages of $0$, that is, $f(0_i)=0$, $i=1,2$.
For the next lemma assume that $0_1\in J_1=(b,0)$ and $0_2\in J_2=(0,c)$. So, for $a$ sufficiently close to $1$ we have that $0_1\in\,_aJ_1=(b,a-1)$ and  $0_2\in\, _aJ_2=(1-a,c)$. Therefore $0\in f( _aJ_1)$ and $0\in  f(_aJ_2)$. Denote $f(_aJ_1)^+$ and $f(_aJ_2)^+$ the bigger of two parts $0$ splits $f(_aJ_1)$ and  $f(_aJ_2)$ into, respectively.

The next lemma says that if $a$ is sufficiently close to $1$, then it is easy to determine $f(_aJ_1)^+$ and $f(_aJ_2)^+$ explicitly.  

\begin{Le}\label{R2-ALEO}
Keeping the notation of above, if $a$ is  close enough  to $1$, then 
$$f(_aJ_1)^+=f([0_1,a-1]) \ \ \text{and} \ \ f(_aJ_2)^+=f([1-a, 0_2]).$$
\end{Le}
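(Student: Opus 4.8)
\noindent\emph{Proof proposal.} The plan is to reduce the whole statement to monotonicity of $f$ on each of its two branches. Since $f'>\sqrt2>0$ on $I\setminus\{0\}$, the map $f$ is strictly increasing on $[-\tfrac12,0)$ and on $(0,\tfrac12]$, so the $f$-image of any subinterval of one of these branches is again an interval whose endpoints are the images of the original endpoints. The only non-elementary ingredient I would use is the geometric fact, built into the Lorenz model, that the return map does not reach the lateral boundary of the cross-section; concretely this gives $|f(-\tfrac12)|<\tfrac12$ and $|f(\tfrac12)|<\tfrac12$, so that $\delta:=\tfrac12-\max\{|f(-\tfrac12)|,|f(\tfrac12)|\}>0$.

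First I would fix the threshold for $a$. Because $f$ is continuous on each branch with $f(0^-)=\tfrac12$ and $f(0^+)=-\tfrac12$, one has $f(a-1)\to\tfrac12$ and $-f(1-a)\to\tfrac12$ as $a\to 1^-$; hence, in addition to the conditions on $a$ already imposed (so that $0_1\in{}_aJ_1$ and $0_2\in{}_aJ_2$), I would take $a$ close enough to $1$ that
\[
f(a-1)\ >\ \tfrac12-\delta\ \ge\ |f(-\tfrac12)|
\qquad\text{and}\qquad
-f(1-a)\ >\ \tfrac12-\delta\ \ge\ |f(\tfrac12)|.
\]

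With such an $a$ the lemma becomes a comparison of lengths, one branch at a time. On the left, ${}_aJ_1=(b,a-1)\subset[-\tfrac12,0)$, so $f({}_aJ_1)=(f(b),f(a-1))$; since $0_1\in(b,a-1)$ and $f(0_1)=0$, the point $0$ cuts this interval into $f([b,0_1])=[f(b),0]$ and $f([0_1,a-1])=[0,f(a-1)]$, of lengths $|f(b)|$ and $f(a-1)$. As $b<0_1$ and $f$ is increasing, $|f(b)|\le|f(-\tfrac12)|<f(a-1)$, so $f([0_1,a-1])$ is strictly the longer piece and $f({}_aJ_1)^+=f([0_1,a-1])$. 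The right branch is symmetric: ${}_aJ_2=(1-a,c)\subset(0,\tfrac12]$, $f({}_aJ_2)=(f(1-a),f(c))$, and $0=f(0_2)$ cuts it into $f([1-a,0_2])=[f(1-a),0]$ and $f([0_2,c])=[0,f(c)]$, of lengths $-f(1-a)$ and $f(c)$; since $0_2<c\le\tfrac12$ and $f$ is increasing, $f(c)\le|f(\tfrac12)|<-f(1-a)$, so $f({}_aJ_2)^+=f([1-a,0_2])$.

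The step I expect to be the real obstacle is justifying the geometric input $|f(\pm\tfrac12)|<\tfrac12$ that lets one pick $a$ uniformly in $J_1$ and $J_2$: properties (f1)--(f3) alone only give $|f(x)|\le\tfrac12$, and without a definite gap below $\tfrac12$ a piece $f([b,0_1])$ coming from $b$ very close to $-\tfrac12$ could be almost as long as the whole branch image, so that no single $a$ would work. One must therefore invoke the construction of the geometric Lorenz attractor (the strict inclusion $P(S^\star)\subsetneq S$), or else state the lemma with $a$ allowed to depend on the admissible intervals; everything else is routine bookkeeping with the increasing branches of $f$ and the boundary values $f(0^\mp)=\pm\tfrac12$.
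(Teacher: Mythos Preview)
Your argument is correct and follows essentially the same route as the paper's proof: both reduce the question to the inequality $|f(b)|\le |f(-\tfrac12)|<f(a-1)$ (and its mirror image), using monotonicity on each branch and continuity at $0^\pm$ to choose $a$. The only difference is cosmetic---the paper packages the gap as $\psi:=|f([0_1,0))|-|f([-\tfrac12,0_1])|=\tfrac12-|f(-\tfrac12)|$ rather than your $\delta$, and imposes $\tfrac12-f(a-1)<\psi/2$ instead of $f(a-1)>|f(-\tfrac12)|$; these give the same conclusion.

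Regarding the point you flag as the ``real obstacle'': the paper handles it exactly as you suspect, simply asserting $f(-\tfrac12)\neq-\tfrac12$ in the proof and later, in the abstract corollary for general $f$ satisfying (f1)--(f3), listing $f(-\tfrac12)\neq-\tfrac12$ and $f(\tfrac12)\neq\tfrac12$ as explicit extra hypotheses. So your diagnosis that this is an input from the geometric construction (the strict inclusion $P(S^\star)\subsetneq S$) rather than a consequence of (f1)--(f3) is accurate, and no further justification is expected at the level of this lemma.
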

\begin{proof}
Since $0_1 \in \, _aJ_1=(b,a-1)$ and  $0_2\in \, _aJ_2=(1-a,c)$, we need only to prove that 
$$|f([b,0_1])|<|f([0_1, a-1])| \ \text{and} \ |f([0_2,c])|<|f([1-a,0_2])|,$$
for $a$ sufficiently close to $1$. Let's prove the 
left hand  inequality, the other one is analogous. Note that, $f(-\frac{1}{2})\neq -\frac{1}{2}$, then by definition of $f$ we have that $|f([-\frac{1}{2},0_1])|<|f([0_1,0))|$. In particular, for all $b\in [-\frac{1}{2},0_1]$ it holds that $|f([b,0_1])|<|f([0_1,0))|$. Now consider the number $\psi:=|f([0_1,0))|-|f([-\frac{1}{2},0_1])|>0$, which only depend of $f$. 
So, we can taken $a$ sufficiently close to $1$ such that 
$$|f([0_1,0))|-|f([0_1,a-1])|<\dfrac{\psi}{2},$$
and therefore we  conclude that $|f([b,0_1])|<|f([0_1, a-1])|$ as we wished. 
\end{proof}

From now on, we will denote $L^{a}_1:=[f(1-a),0)$.\\

To prove the next lemma, we use the same idea as in the proof of lemma \ref{LEO}, to get  control on the number of iterations required to increase the size of any interval avoiding the singularity in each step. 
\begin{Le}\label{L2-ALEO}
If $J\subset I$ is a subinterval then there are a subinterval  $J'\subset J$  and an integer $n(J)$ such that $f^{n(J)}: J'\to L_1^{a}$ is a  diffeomorphism such that $d(f^{i}(J'),\{0\})>0$, $i=0,\dots,n(J)-1$ and 
$$n(J)\leq 3+\ds\dfrac{\log\dfrac{1}{2|J|}}{\log\dfrac{a^2\eta^2}{2}}.$$
\end{Le}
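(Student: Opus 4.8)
The plan is to mimic the proof of Lemma \ref{LEO}, but now keeping careful track of the singularity and cutting away a small proportion of each interval at every stage, using the number $a$ from \eqref{eq1'}. First I would set up the iteration: given $J\subset I$, let $J_0$ be the larger of the two pieces into which $0$ splits $J$ (or $J$ itself if $0\notin J$), so $|J_0|\ge|J|/2$, and then replace $J_0$ by $(J_0)_a$, the subinterval of the same interval avoiding $0$ obtained by shaving off a $(1-a)$-proportion on the side nearest $0$; set $\tilde J_0 = (J_0)_a$, so $|\tilde J_0|\ge a|J|/2$ and $d(\tilde J_0,\{0\})>0$. Inductively, having defined an interval $\tilde J_i$ with $0\notin \tilde J_i$, look at $f(\tilde J_i)$: if $0\notin f(\tilde J_i)$ put $\tilde J_{i+1}=(f(\tilde J_i))_a$ (wait — actually here one should simply take $\tilde J_{i+1}=f(\tilde J_i)$ when $0\notin f(\tilde J_i)$, and only cut when $0\in f(\tilde J_i)$, taking $\tilde J_{i+1}$ to be the larger half of $f(\tilde J_i)$ with a $(1-a)$-shave applied), so that in all cases $|\tilde J_{i+1}|\ge \frac{a}{2}\,\eta\,|\tilde J_i|$ when a cut happens and $|\tilde J_{i+1}|\ge \eta|\tilde J_i|$ otherwise, and always $d(\tilde J_{i+1},\{0\})>0$.

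Next I would derive the growth estimate exactly as in Lemma \ref{LEO}. As long as $0$ does not lie in two consecutive images $f(\tilde J_i)$, $f(\tilde J_{i+1})$, at least one of every two consecutive steps is an uncut $\eta$-expansion, so $|\tilde J_{i+2}|\ge \frac{a}{2}\,\eta\cdot\eta\,|\tilde J_i| = \frac{a\eta^2}{2}|\tilde J_i|$; and since $a^2\eta^2>2$ (hence $a\eta^2>2/a>2$, so in particular $\frac{a\eta^2}{2}>1$), the lengths cannot stay bounded — they would exceed $1$, a contradiction. Hence there is a first index, call it $m$, with $0\in f(\tilde J_{m-1})$ and $0\in f(\tilde J_m)$; equivalently, after at most $\approx \log\frac{1}{2|J|}\big/\log\frac{a^2\eta^2}{2}$ steps (here is where the quantitative bound comes from: starting from $|\tilde J_0|\ge a|J|/2$ and multiplying by at least $\frac{a\eta^2}{2}>\frac{a^2\eta^2}{2}$ every two steps until length $\ge 1/2$), we reach an interval $\tilde J_{m}$ with $0$ in both $f(\tilde J_{m-1})$ and $f(\tilde J_m)$. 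I would double-check the arithmetic so that the "$+3$" in the statement absorbs the final clean-up steps plus the rounding in this logarithmic count.

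At that point I would invoke Lemma \ref{L1'-ALEO}: since $0\in f(\tilde J_{m-1})$ and $0\in f^2(\tilde J_{m-1})$ (because $\tilde J_m$ is a piece of $f(\tilde J_{m-1})$ and $0\in f(\tilde J_m)\subset f^2(\tilde J_{m-1})$), we get $|\tilde J_{m-1}|\ge\kappa$, hence $|f(\tilde J_{m-1})|\ge\eta\kappa$ and in fact $f(\tilde J_{m-1})$ contains $0$ together with an endpoint of $I$ — so its larger half is exactly one of the intervals $[-1/2,0)$ or $(0,1/2]$ up to the position of $0_i$. Applying the $(1-a)$-cut and Lemma \ref{R2-ALEO} (valid because $1-a<\kappa\le |\tilde J_{m-1}|$ guarantees $a$ is "close enough to $1$" relative to the interval $_a(\cdot)$), I can identify $f(\tilde J_m)^+$ explicitly as $f([0_i, a-1])$ or $f([1-a,0_i])$, and then one or two further iterates of $f$ — again using that $f$ of a half of $I$ covers the complementary half — bring this onto all of $L_1^a=[f(1-a),0)$ as a diffeomorphism. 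Collecting the indices, $J'$ is the preimage inside $J$ of $L_1^a$ under this composition, $n(J)$ is $m$ plus the (at most $3$) final steps, and by construction $d(f^i(J'),\{0\})>0$ for $0\le i\le n(J)-1$ since every $\tilde J_i$ was chosen to avoid $0$.

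The main obstacle I anticipate is bookkeeping rather than conceptual: one has to be scrupulous about which steps incur the $\frac{a}{2}$-loss versus a clean $\eta$-expansion, about the base case $|\tilde J_0|\ge a|J|/2$ feeding correctly into the logarithmic bound so that the constant "$3$" genuinely suffices, and about checking that at the terminal stage the interval is large enough ($\ge\kappa$, hence $>1-a$) for Lemmas \ref{R2-ALEO} to apply and for the shaved interval to still map diffeomorphically onto all of $L_1^a$. In particular I would want to verify that no more than $3$ extra iterates are ever needed after reaching $\tilde J_m$ — plausibly $f(\tilde J_{m-1})\supset$ half of $I$, then $f$ of that half $\supset$ the other half, then a third iterate and a suitable restriction gives $L_1^a$ exactly — and that the diffeomorphism property is preserved because at each of those last steps we are on a subinterval on one side of $0$.
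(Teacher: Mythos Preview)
Your overall architecture matches the paper's proof: iterate while shaving a $(1-a)$-proportion, track growth until two consecutive images contain $0$, invoke Lemma~\ref{L1'-ALEO} to get a size bound $\ge\kappa$, then use the absolute shave $_a(\cdot)$ together with Lemma~\ref{R2-ALEO} to land on $L_1^a$ and pull back to define $J'$.

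Two points deserve attention. First, your mid-proof ``wait'' is actually a step in the wrong direction. The paper shaves at \emph{every} step, setting $J_{i+1}=f(J_i)_a$ even when $0\notin f(J_i)$; this is why the two-step growth factor is $\tfrac{a^2\eta^2}{2}$ rather than your $\tfrac{a\eta^2}{2}$. For the lemma as stated your variant is harmless (your growth is faster, so the same bound on $n(J)$ follows a fortiori), but the always-shave version is what gives the quantitative estimate $d(f^i(J'),\{0\})\ge\tfrac{1-a}{2}|J_{i-2}|$ of Remark~\ref{R1}, which is essential for Corollary~\ref{C1-ALEO}(b) and hence for Theorem~\ref{L.Principal}. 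With your ``shave only when $0$ is hit'' iteration, $f^i(J')$ can sit arbitrarily close to $0$ at the unshaved steps, and the later derivative bound $\sup|f'|\le E\,m_k^{\xi}$ collapses.

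Second, your terminal analysis contains a genuine error: the assertion that ``$f(\tilde J_{m-1})$ contains $0$ together with an endpoint of $I$'' is false. That conclusion is valid in Lemma~\ref{LEO} precisely because no shaving is done there, so the interval abuts $0$ and its image reaches $\pm\tfrac12$. Here $\tilde J_{m-1}$ has been trimmed away from $0$, so $f(\tilde J_{m-1})$ need not reach either endpoint. What you actually know is only that $f(J_{n-2})^+$ is a one-sided interval touching $0$ of length $>\tfrac{\eta\kappa}{2}>1-a$, so the absolute cut $_a f(J_{n-2})^+$ is defined and still contains the relevant preimage $0_1$ or $0_2$; the paper then runs a two-case analysis (depending on which side of $0$ you are on) and uses Lemma~\ref{R2-ALEO} to identify $f(_af(J_{n-2})^+)^+$ as $L_1^a$ directly (Case~1, $n(J)=n$) or after one further step (Case~2, $n(J)=n+1$). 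Your sketch gestures at this but the ``endpoint of $I$'' shortcut does not hold, and the ``one or two further iterates'' needs exactly this case split to be justified; you should replace that paragraph with the paper's argument.
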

\begin{proof}
Given $J\subset I$, let $J_0: = J_a$ if $0\notin J$; otherwise let $J_0:=J^{+}_{a}$, where $J^{+}$ is  the biggest connected component of $J\setminus \{0\}$. 
Similarly, for each $i$ such that $J_i$ is defined, set 
\begin{equation}\label{e2L2}
J_{i+1} = \begin{cases} f(J_i)_{a} &\mbox{if } 0\notin f(J_i) \\
f(J_i)^{+}_{a}, & \mbox{if } 0\in f(J_i). \end{cases}
\end{equation}

Note that $|f(J_{i+1})|>\eta |J_{i+1}|$, where $\eta = \inf |f'|>\sqrt{2}$.
Thus, unless $0$ is in both $f(J_i)$ and $f(J_{i+1})$ we have
$$|J_{i+2}|\geq \frac{a^2\eta^2}{2}|{J_i}|.$$
But as $a^2\eta^2 >2$, this last inequality cannot always hold. Let  $n$ be the minimum {number}  such that
\begin{equation}\label{eq3-ALEO}
0\in f(J_{n-2}) \ \ \text{and} \ \ 0\in f(J_{n-1}).
\end{equation}
Thus equation (\ref{eq3-ALEO}) implies that $J_{n-2}$ satisfies the hypothesis of lemma \ref{L1'-ALEO}. Therefore $|J_{n-2}|\geq \kappa$, and as $1-a<\kappa$ (see equation (\ref{eq1'})),  we define the interval $_af(J_{n-2})^{+}:=\tilde{J}_{n-1}\subset f(J_{n-2})^{+}$.
To finish the proof of lemma, we have to consider two cases, depending on the relative position of 
$\tilde{J}_{n-1}$ in the connected components of $I\setminus\{0\}$:\\

\textbf{Case 1:} Assume that $\tilde{J}_{n-1}\subset (0,\frac{1}{2}]$. Thus by definition of $_af(J_{n-2})^{+}$ we have that $\tilde{J}_{n-1}=[1-a, b]$ for some $b>0$. Moreover, as $0\in f(J_{n-1})$, then $0\in f^2(J_{n-2})$, therefore as $\tilde{J}_{n-1}\subset (0,\frac{1}{2}]$, {then arguing as in the proof of } lemma \ref{L1'-ALEO}, we get that $[0_1,0_2^1]\subset f(J_{n-1})$ or  $[0_2,0_2^2]\subset f(J_{n-1})$,  consequently  since $f(0_2^i)=0_2$, then  $0_2\in \tilde{J}_{n-1}$, which implies by  lemma \ref{R2-ALEO}  that $|f((1-a,0_2))|>|f((0_2,b))|$ or equivalently $$f(\tilde{J}_{n-1})^{+}=f([1-a,0_2))=[f(1-a),0)=L^a_{1}.$$
In this case, we define the following sequence of intervals $I_{n-2}=f^{-1}[1-a,0_2]\subset J_{n-2}$ and $I_{i}=f^{-1}(I_{i+1})\subset J_{i},\, 0\leq i \leq n-2$.
 Hence, by construction, the interval $J':=I_0\subset J$  satisfies $$f^{i}(J')\subset J_{i-1} = \begin{cases} f(J_{i-2})_{a} &\mbox{if } 0\notin f(J_{i-2}) \\
f(J_{i-2})^{+}_{a}, & \mbox{if } 0\in f(J_{i-2}). \end{cases}$$ 
Therefore we conclude that 
$$d(f^{i}(J'),\{0\})\geq  \begin{cases} (1-a)\cdot |f(J_{i-2})|\geq (1-a)\cdot|J_{i-2}| &\mbox{if } 0\notin f(J_{i-2}) \\
\, \\
(1-a)\cdot|f(J_{i-2})^{+}|\geq \dfrac{1-a}{2}|J_{i-2}|, & \mbox{if } 0\in f(J_{i-2}). \end{cases}$$
So,  taking  $n(J)=n$ we have that $f^{n(J)}\colon J'\to L_{1}^{a}$ is a diffeomorphism and  it is easy to see that  $d(f^{i}(J'),\{0\})>0$, $i=0,\dots,n(J)-1$.  This concludes the proof of Case 1.\\
\ \\
\textbf{Case 2:} Assume that $\tilde{J}_{n-1}\subset [-\frac{1}{2},0)$. Then,  $\tilde{J}_{n-1}=[c,a-1]$. Thus by the same argument of case 1, we have that  $0_1\in \tilde{J}_{n-1}$ and by lemma \ref{R2-ALEO} we have  $|f((0_1, a-1))|>|f((c,0_1))|$ or equivalently $f(\tilde{J}_{n-1})^{+}=f((0_1,a-1])=(0, f(a-1)]$, then,  $\tilde{J}_{n}: =\ _af(\tilde{J}_{n-1})^{+}=[1-a, f(a-1)]$. Note that for $a$ sufficently  close to $1$, $f(a-1)>0_2$ and therefore $0_2\in \tilde{J}_n$. To conclude our arguments, we note that by lemma \ref{R2-ALEO} $$f(\tilde{J}_{n})^{+}=[f(1-a),0)=L^a_1.$$
In this case, we define the following sequence of intervals $I_{n-1}=f^{-1}[1-a,0_2]\subset \tilde{J}_{n-1}$ and $I_{i}=f^{-1}(I_{i+1})\subset J_{i}$, then using a similar argument of case 1, we have that  the interval $J':=I_0\subset J$ satisfies the condition $d(f^{i}(J'),\{0\})>0$, $i=0,\dots,n(J)-1$ for $n(J)=n+1$. The proof of Case 2 is complete. \\
\ \\
To finish the proof of lemma, it is only left  to estimate  $n(J)$. For this, note that in any case,  by construction $|J_{n-2}|\geq \left(\frac{a^2\eta^2}{2} \right)^{n-2}|J|$ and since $|J_{n-2}|\leq \frac{1}{2}$  the  estimative required for $n(J)$   follows immediately.
\end{proof}

\begin{R}\label{R1} Note that we also proved the next estimative
$$d(f^{i}(J'),\{0\})\geq \dfrac{(1-a)}{2}|J_{i-2}|, \,\,i=0,\dots,n(J)-1,$$ where $J_{i-2}$ are  given by (\ref{e2L2}) above. 

\end{R}


The next corollary will be a fundamental tool for the proof of theorem \ref{L.Principal}, more specifically, see  claim 4 in the proof of theorem \ref{L.Principal}.
\begin{C}\label{C1-ALEO}
Let $m_k\in \mathbb{N}$ be a sequence such that $\ds\lim_{k\to \infty}m_k=\infty$. Then if $J_k\subset I$ with\\ $|J_k|\geq \dfrac{1}{3m_k^3}$, we have:
\begin{itemize}
\item[$(a)$]There is a constant $D$ such that $n(J_k)\leq D\log m_k$.
\item[$(b)$]  There are constants $E>0$ and $\xi>0$ such that for each $i=0,1,\dots, n(J_k)-1$ hold that $\ds\sup_{x\in f^{i}(J'_{k})}|f'|=E\cdot m_{k}^{\xi}$, where $J'_{k}$ is as the lemma \ref{L2-ALEO}.
\end{itemize} 
\end{C}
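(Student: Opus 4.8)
The plan is to derive both parts of Corollary~\ref{C1-ALEO} directly from Lemma~\ref{L2-ALEO} together with the distortion estimate \eqref{E3S2} of Proposition~\ref{Proprobusto}, exploiting the hypothesis $|J_k|\geq 1/(3m_k^3)$ to control all the relevant quantities in terms of $\log m_k$ and a power of $m_k$.

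For part $(a)$, I would simply substitute the lower bound $|J_k|\geq 1/(3m_k^3)$ into the estimate
$$n(J_k)\leq 3+\frac{\log\frac{1}{2|J_k|}}{\log\frac{a^2\eta^2}{2}}$$
from Lemma~\ref{L2-ALEO}. Since $\frac{1}{2|J_k|}\leq \frac{3m_k^3}{2}$, the numerator is at most $\log\frac{3}{2}+3\log m_k$, and the denominator $\log\frac{a^2\eta^2}{2}$ is a fixed positive constant (positive by \eqref{eq1'}). Hence $n(J_k)\leq 3 + \frac{\log(3/2)+3\log m_k}{\log(a^2\eta^2/2)}$, and since $m_k\to\infty$ we may absorb the additive constants into the logarithmic term by enlarging the multiplicative constant: $n(J_k)\leq D\log m_k$ for a suitable $D>0$ and all $k$ large (or all $k$, after possibly increasing $D$ to handle the finitely many small $m_k$). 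This step is routine.

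For part $(b)$, the point is to bound $\sup_{x\in f^i(J'_k)}|f'(x)|$ uniformly over $0\leq i\leq n(J_k)-1$. By Remark~\ref{R1}, each iterate satisfies $d(f^i(J'_k),\{0\})\geq \frac{1-a}{2}|J_{i-2}|$, where the $J_j$ are the intervals built in the proof of Lemma~\ref{L2-ALEO}; moreover by construction these intervals only grow up to the point where the cutting procedure stops, so $|J_{i-2}|\geq (a^2\eta^2/2)^{\,i-2}|J_k| \cdot (\text{a harmless factor})$ — in any case $|J_{i-2}|\geq c|J_k|\geq c/(3m_k^3)$ for a constant $c$ depending only on $f$ (the worst case being the first steps, since the sizes are non-decreasing along the construction up to bounded multiplicative losses from the $(\cdot)_a$ and $(\cdot)^+$ operations). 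Consequently
$$d(f^i(J'_k),\{0\})\geq \frac{(1-a)c}{2}\cdot\frac{1}{3m_k^3}=:\frac{c'}{m_k^3}.$$
Now on the interval $f^i(J'_k)$, which lies at distance at least $c'/m_k^3$ from $0$, every point $x$ has $|x|\geq c'/m_k^3$, so by the upper bound in \eqref{E3S2}, $|f'(x)|=|Df_Y(x)|\leq C|x|^{\alpha-1}\leq C (c'/m_k^3)^{\alpha-1}=C(c')^{\alpha-1} m_k^{3(1-\alpha)}$, using $0<\alpha<1$ so that the exponent $\alpha-1<0$ and the smallest $|x|$ gives the largest value. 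Setting $E:=C(c')^{\alpha-1}$ (adjusted to be positive and to dominate also the finitely many exceptional $k$) and $\xi:=3(1-\alpha)>0$ yields $\sup_{x\in f^i(J'_k)}|f'(x)|\leq E\, m_k^\xi$ for every $i=0,\dots,n(J_k)-1$, which is the claimed bound. (If one insists on the displayed equality in the statement, read it as "$\leq$"; it is the upper bound that is used.)

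The main obstacle is the bookkeeping in part $(b)$: one must make sure that the lower bound $d(f^i(J'_k),\{0\})\geq \frac{1-a}{2}|J_{i-2}|$ from Remark~\ref{R1} is combined with a lower bound on $|J_{i-2}|$ that is \emph{uniform in $i$} and degrades at most polynomially in $m_k$. The construction in Lemma~\ref{L2-ALEO} ensures the sizes $|J_j|$ are non-decreasing up to the stopping time (each step multiplies by roughly $\eta/2\cdot a>\!$ something, with the pair-of-steps estimate $|J_{j+2}|\geq \frac{a^2\eta^2}{2}|J_j|>|J_j|$), and the cutting operations $(\cdot)_a$, $(\cdot)^+$ cost at most a factor $a$ or $1/2$; so in fact $|J_{i-2}|\geq \frac12|J_k|$ for all admissible $i$, which is all that is needed. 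Once this uniform lower bound on the distance to the singularity is in hand, the passage to the bound on $|f'|$ is immediate from \eqref{E3S2}, since $f'$ blows up only at $0$ and is controlled by $|x|^{\alpha-1}$ there.
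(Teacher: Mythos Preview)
Your proposal is correct and follows essentially the same approach as the paper: part $(a)$ is obtained by substituting $|J_k|\geq 1/(3m_k^3)$ into the bound of Lemma~\ref{L2-ALEO}, and part $(b)$ combines the distance estimate of Remark~\ref{R1} with the lower bound $|J_{i-2}|\gtrsim |J_k|$ and the derivative control \eqref{E3S2}, yielding $E=C\cdot(\text{const})^{\alpha-1}$ and $\xi=3(1-\alpha)$. Your discussion of why $|J_{i-2}|\geq c|J_k|$ (via the non-decreasing nature of the construction up to bounded losses from the $(\cdot)_a$ and $(\cdot)^+$ operations) is in fact a bit more careful than the paper, which simply asserts $|(J_k)_{i-2}|\geq |J_k|$; and your remark that the ``$=$'' in the statement should be read as ``$\leq$'' is spot on.
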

\begin{proof}
\begin{enumerate}
\item[$(a)$]  If $|J_k|\geq \dfrac{1}{3m_k^3}$  lemma \ref{L2-ALEO} implies that
$$n(J_k)\leq \ds\dfrac{\log3-\log 2+3\log m_k+3\log \frac{a^2\eta^2}{2}}{\log \dfrac{a^2\eta^2}{2}}\leq D\log m_k,$$
where $D=5/\log\frac{a^2\eta^2}{2}$ and we finish the proof of Item (a).
\item [$(b)$] Let $J'_{k}$ be the interval  given by the lemma \ref{L2-ALEO}. Then remark  \ref{R1} provides 
\begin{equation*}
d(f^{i}(J_k'),\{0\})\geq \frac{1-a}{2}|(J_k)_{i-2}|,
\end{equation*}
where $(J_k)_{i-2}$ are defined at (\ref{e2L2}). The   construction of $(J_k)_{i-2}$ gives  $|(J_k)_{i-2}|\geq |J_k|\geq \frac{1}{3m_k^3}$. Thus
\begin{equation}\label{e1C1}
d(f^{i}(J_k'),\{0\})\geq \frac{1-a}{2}\cdot \dfrac{1}{3m_k^3}.
\end{equation} 

  The next step is to estimate the derivative of $f\colon f^i(J_k')\to f^{i+1}(J_k')$. For this purpose, we use
the inequalities (\ref{e1C1}) and (\ref{E3S2}) 
 which  provides that 
$$\ds\sup_{x\in f^{i}(J'_{k})}|f'|\leq C\left(\frac{1-a}{6m_k^3}\right)^{\alpha-1}=C\cdot(1-a)^{\alpha-1}\cdot 6^{1-\alpha}m_{k}^{3\cdot(1-\alpha)}.$$
We take $E=C\cdot(1-a)^{\alpha-1}\cdot6^{1-\alpha}$ and $\xi=3\cdot(1-\alpha)$. This concludes the proof. \\

\end{enumerate}
\end{proof}


\section{Fat Cantor sets for $f$ and proof of Theorem \ref{L.Principal}}\label{FCSf}
The main goal in this section is to prove theorem \ref{L.Principal}, that is, that there are infinitely many regular Cantor sets for the one dimensional map  associated to a Geometric Lorenz Attractor, with Hausdorff dimension ($HD$) very close to $1$. 

Before we announce precisely  this result, let us recall the definition of Hausdorff dimension of a Cantor set and  the notion of {\em{regular}} Cantor set. We refer the reader the book \cite[Chapter 4]{PT} for a nice exposition of the main properties of this kind of Cantor sets. We proceed as follows. 

Let $K\subset \RR$ be a Cantor set and $\mathcal{U}= \{U_i\}_{1\leq i\leq n}$
a finite covering of $K$ by open intervals in $\RR$. We define the diameter $\diam(\mathcal{U})$ as the maximum of $\ell(U_i), 1\leq i \leq n$, where $\ell_i:=\ell(U_i)$ denotes the length of $U_i$. Define $H_\alpha(\cU)= \ds\sum_{1\leq i \leq n}\ell_i^\alpha.$ Then the {\em{Hausdorff $\alpha$-measure }} of $K$ is
$$
\ds m_\alpha(K)=\lim_{\epsilon \to 0} \left( \inf_{{\substack{\cU\,\, cover K,\\ \diam(\cU)< \epsilon}}} H_\alpha(\cU)\right).
$$
One  can  show  that  there  is  an  unique  real number,  the {\em{Hausdorff dimension}} of $K$, which we denote by $HD(K)$, such that for $\alpha < HD(K)$, $m_\alpha(K)=\infty$ and for
$\alpha > HD(K)$, $m_\alpha(K)=0$.

\begin{Defi}\label{Regular Cantor set}
A dynamically defined (or regular) Cantor set is a Cantor set $K\subset \RR$, together with
\begin{enumerate}

\item[i)] 
a disjoint compact intervals $I_1,I_2,\dots, I_r$ such that $K\subset I_1\cup I_2\cup \cdots \cup I_r$ and the boundary of each $I_j$ is contained in $K$;
\item[ii)] there is a $C^{1+\alpha}$ expanding map $\psi$ defined in a neighborhood of $I_1\cup I_2\cup\cdots I_r$ such that, for each $j$, $\psi(I_j)$ is the convex hull of a finite union of some of these intervals $I_s$. Moreover, 
$\psi$ satisfies: 
\begin{itemize}
\item for each $1\leq j\leq r$ and $n$
sufficiently big, $\psi^n(K\cap I_j)=K$;
\item $K=\bigcap \psi^{-n}(I_1\cup I_2\cup \cdots I_r)$.
\end{itemize}

\end{enumerate}
We say that $\{I_1,I_2,\dots, I_r\}$
is a Markov partition for $K$ and that $K$ is defined
by $\psi$.
\end{Defi}
A classical example of regular Cantor set  in $\RR$ is the ternary Cantor set $K_{\frac{1}{3}}$ of the elements of $[0,1]$  which can be written in base $3$ using only digits $0$ and $2$.  The set $K_{\frac{1}{3}}$ is  a regular Cantor set, defined by the map
$\psi\colon [0,\frac{1}{3}]\cup [\frac{2}{3},1] \to  \RR$ given by  
$$\psi(x)= \begin{cases}  \ \ 3x, &\mbox{if } x\in  [0,\frac{1}{3}]\\
-3x+3, & \mbox{if } x\in[\frac{2}{3},1]. \end{cases}$$

There is a class of examples of regular Cantor sets, given by a non trivial  basic set $\Lambda$ associated to a $C^2$ diffeomorphism $\varphi: M\to M$ of a $2$-manifold $M$, which appear in the proof of corollary A. Recall that a basic set is a compact hyperbolic invariant transitive set of $\varphi$ which coincides with the maximal invariant set in a neighborhood of it. Nontrivial means that it does not consist of finitely many periodic orbits. These types of regular Cantor sets, roughly speaking,  are given by the  intersections $W^s(x)\cap \Lambda$ and $W^u(x)\cap \Lambda$, where $W^s(x)$ and  $W^u(x)$ are the stable and unstable manifolds  of $x\in \Lambda$. 
We denote by $K^s:=W^s(x)\cap \Lambda$  the \emph{stable Cantor set} and  $K^u:=W^u(x)\cap \Lambda$ the \emph{unstable Cantor set} (cf. \cite[chap 4]{PT} or \cite[Appendix]{RM2}). 

\indent If $\Lambda$ is a basic set associated to $C^2$ diffeomorphism defined in a surface, then it is locally the product of two regular Cantor sets $K^s$ and $K^u$ (cf. \cite[Appendix 2]{PT}).  
We shall use the following properties of a regular Cantor set, whose proofs can be found in \cite{PT}:\\
\begin{Pro}\label{Prop.2}$\cite[Proposition \,  4]{PT}$
The
Hausdorff dimension of a basic set $\Lambda$ satisfies 
\begin{equation*}\label{dimension of produt}
HD(\Lambda)=HD(K^s\times K^u)=HD(K^s)+HD(K^u).
\end{equation*}
\end{Pro}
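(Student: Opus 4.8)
The statement decomposes into two inequalities, and since the paragraph just above recalls that $\Lambda$ is locally the product $K^s\times K^u$, and the Hausdorff dimension of a compact set equals the maximum of the dimensions of the pieces of a finite closed cover, I would first reduce everything to the following assertion: if $K_1,K_2\subset\RR$ are regular Cantor sets (with defining $C^{1+\alpha}$ expanding maps $\psi_1,\psi_2$), then $HD(K_1\times K_2)=HD(K_1)+HD(K_2)$. The first equality in the Proposition, $HD(\Lambda)=HD(K^s\times K^u)$, is then just this local product structure together with the countable/local stability of $HD$.

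For the lower bound $HD(K_1\times K_2)\ge HD(K_1)+HD(K_2)$, which holds for arbitrary Borel sets, the plan is to use the mass distribution principle. For each $s_i<HD(K_i)$ pick a Frostman measure $\mu_i$ on $K_i$ with $\mu_i(B(x,r))\le c_i\,r^{s_i}$. Since a ball of radius $r$ in $\RR^2$ is contained in a product of two intervals of length $2r$, the product measure satisfies $(\mu_1\times\mu_2)(B(z,r))\le C\,r^{s_1+s_2}$, so $HD(K_1\times K_2)\ge s_1+s_2$; letting $s_i\uparrow HD(K_i)$ gives the claim.

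For the upper bound I would exploit that a regular Cantor set has equal Hausdorff and (upper) box dimensions, a consequence of the bounded distortion of its defining expanding map (see \cite[Chapter 4]{PT}). Writing $d_i=HD(K_i)$ and fixing $\epsilon>0$: iterating $\psi_i$ on the Markov partition and using bounded distortion, all cylinders of a deep enough refinement have comparable size, so for every small $\delta>0$ there is a cover of $K_i$ by intervals $\{I^{(i)}_\alpha\}$ with $\diam I^{(i)}_\alpha\asymp\delta$ and cardinality $\le C_\epsilon\,\delta^{-(d_i+\epsilon)}$. The products $I^{(1)}_\alpha\times I^{(2)}_\beta$ then cover $K_1\times K_2$ by at most $C_\epsilon^2\,\delta^{-(d_1+d_2+2\epsilon)}$ sets of diameter $\asymp\delta$, whence $H_{d_1+d_2+3\epsilon}$ of this cover tends to $0$ as $\delta\to0$ and $HD(K_1\times K_2)\le d_1+d_2+3\epsilon$; since $\epsilon>0$ is arbitrary we are done. (A cleaner alternative obtaining both inequalities at once: put on each $K_i$ the Gibbs measure $\mu_i$ for the potential $-d_i\log|\psi_i'|$, which by bounded distortion is Ahlfors $d_i$-regular, $\mu_i(B(x,r))\asymp r^{d_i}$, precisely because $d_i$ is the root of the pressure equation $P(-t\log|\psi_i'|)=0$; then $\mu_1\times\mu_2$ is Ahlfors $(d_1+d_2)$-regular and the mass distribution principle applied in both directions yields the equality.)

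The main obstacle is the upper bound, specifically the counting estimate $\#\{\text{cylinders of size}\asymp\delta\}\lesssim\delta^{-(d_i+\epsilon)}$, equivalently the coincidence of box and Hausdorff dimension for regular Cantor sets. This is exactly where the $C^{1+\alpha}$ hypothesis enters, through the bounded-distortion property of the expanding map and the thermodynamic characterization of $HD(K_i)$; both are supplied by \cite{PT}, so here I would simply invoke them rather than redevelop the distortion estimates.
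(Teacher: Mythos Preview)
Your argument is correct and is essentially the standard proof (local product structure plus the Frostman/mass-distribution lower bound and the box-dimension upper bound via bounded distortion, or equivalently the Ahlfors-regular Gibbs measure). Note, however, that the paper does not actually prove this Proposition: it simply cites \cite[Proposition~4]{PT} and refers the reader there, so there is no in-paper argument to compare against---your sketch is just a fleshing-out of the reference the authors invoke.
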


\begin{Pro}\label{Prop.3} $\cite[Proposition \,  7]{PT}$\/
If $K$ is a regular Cantor set then
$$0< HD(K)< 1.$$
\end{Pro}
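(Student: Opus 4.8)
The plan is to deduce everything from two standard facts about a regular Cantor set $K\subset\RR$ defined by a $C^{1+\alpha}$ expanding map $\psi$ with Markov partition $\{I_1,\dots,I_r\}$. First, \emph{bounded distortion}: since $\psi$ is $C^{1+\alpha}$ and $|(\psi^{N})'|\ge\lambda>1$ for some $N$ on a neighbourhood of $\bigcup_jI_j$, a telescoping/Koebe estimate gives a constant $D\ge 1$, independent of $n$ and of the cylinder, such that for every admissible word $\underline a=(a_0,\dots,a_{n-1})$ the branch $\psi^{n}$ restricted to the cylinder $I_{\underline a}:=\{x\in I_{a_0}:\psi^{i}(x)\in I_{a_i},\ 0\le i\le n-1\}$ has $\sup|(\psi^{n})'|\le D\inf|(\psi^{n})'|$ on $I_{\underline a}$. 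In particular there are $1<\lambda\le\Lambda<\infty$ with $c\,\lambda^{-n}\le|I_{\underline a}|\le C\,\Lambda^{-n}$ for every level-$n$ cylinder, and $\sum_{|\underline a|=n}|I_{\underline a}|=|\bigcup_{|\underline a|=n}I_{\underline a}|\le|\bigcup_jI_j|<\infty$, with $\bigcap_n\bigcup_{|\underline a|=n}I_{\underline a}=K$. Second, since the $I_j$ are pairwise disjoint, $x\mapsto(a_i(x))_{i\ge0}$ is a homeomorphism from $K$ onto the one-sided subshift of finite type $\Sigma_A^{+}$ on $\{1,\dots,r\}$ with $A_{jk}=1\iff I_k\subseteq\psi(I_j)$; thus $K$ being a Cantor set means $\Sigma_A^{+}$ is perfect, hence uncountable, hence (classically) has positive topological entropy.

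For the lower bound $HD(K)>0$: positive entropy of $\Sigma_A^{+}$ produces a symbol $j^{*}$ and two distinct admissible loops $u\neq v$ from $j^{*}$ to $j^{*}$, which we may take of the same length $L$. Freely concatenating blocks from $\{u,v\}$ gives a Cantor set $K'\subset K$ whose level-$k$ block cylinders are $2^{k}$ in number, each a level-$(kL)$ cylinder $I_{\underline a}$ of $K$, hence of length $\ge c\,\Lambda^{-kL}$. Putting on $K'$ the $(\tfrac12,\tfrac12)$-Bernoulli mass distribution $\mu$ and using bounded distortion to compare a ball $B(x,\rho)$ with the block cylinders it meets, one obtains $\mu(B(x,\rho))\le M\rho^{d_0}$ with $d_0=\frac{\log 2}{L\log\Lambda}>0$; the mass distribution principle then yields $HD(K)\ge HD(K')\ge d_0>0$.

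For the upper bound $HD(K)<1$ the key claim is a uniform loss-of-mass estimate: there are $m_0\in\NN$ (one may take $m_0=r$) and $\theta\in(0,1)$ with $|\bigcup_{\underline b}I_{\underline a\underline b}|\le\theta\,|I_{\underline a}|$ for every admissible $\underline a$, the union over admissible continuations $\underline b$ of length $m_0$. Call a symbol $j$ \emph{gapless} if $\psi(I_j)$ is the convex hull of a single $I_{s(j)}$; the partial map $j\mapsto s(j)$ has no cycle, since a gapless cycle of period $p$ would force $\psi^{p}(I_{j_0})=I_{j_0}$, contradicting $|(\psi^{p})'|>1$. Hence from every symbol a branching (non-gapless) symbol is reached within $\le r$ steps, and at a branching symbol $\psi(I_{a_t})$ loses to gaps at least a fixed proportion $\delta>0$ of its length; pulling this gap back along the corresponding branch and invoking bounded distortion, $I_{\underline a}$ loses at least a proportion $\delta/D^{2}$ of its length by level $|\underline a|+r$, giving the claim with $\theta=1-\delta/D^{2}$. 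Iterating, $\sum_{|\underline a|=n}|I_{\underline a}|=|\bigcup_{|\underline a|=n}I_{\underline a}|\le C_1\tau^{n}$ with $\tau=\theta^{1/m_0}<1$, so for $s<1$, using also $|I_{\underline a}|\le C\Lambda^{-n}$,
\[
\sum_{|\underline a|=n}|I_{\underline a}|^{s}\le\Big(\max_{|\underline a|=n}|I_{\underline a}|\Big)^{s-1}\sum_{|\underline a|=n}|I_{\underline a}|\le C_2\,\Lambda^{n(1-s)}\tau^{n}\xrightarrow[n\to\infty]{}0
\]
as soon as $s>1-\frac{\log(1/\tau)}{\log\Lambda}$. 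Since the level-$n$ cylinders cover $K$ with mesh tending to $0$, this forces $m_s(K)=0$ for such $s<1$, i.e. $HD(K)\le s<1$; combined with the previous paragraph, $0<HD(K)<1$.

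The main obstacle is the structural claim in the last paragraph: organizing the gapless symbols so as to obtain a \emph{uniform} bound on the time to reach a branching symbol, and then correctly transporting the definite gap proportion from the branching level back to the cylinder $I_{\underline a}$ through the bounded-distortion estimate. The perfectness $\Rightarrow$ positive-entropy input used for the lower bound is classical but must be invoked with some care about the component structure of $A$.
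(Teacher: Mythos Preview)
The paper does not prove this proposition; it is quoted from Palis--Takens \cite[Proposition~7]{PT} and used as a black box. So there is no argument here to compare against, but your write-up can be measured against the standard proof, which proceeds along the same lines (bounded distortion plus a gap/branching analysis for the upper bound, and a mass-distribution argument on a sub-horseshoe for the lower bound). Your overall strategy is correct, and the structural claim driving the upper bound---no gapless cycle, hence from any symbol a branching symbol is reached within at most $r$ steps, hence a definite proportion of length is lost every $m_0$ levels---is the right idea and is argued properly.

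Two concrete slips need repair, though. First, the sentence ``there are $1<\lambda\le\Lambda<\infty$ with $c\,\lambda^{-n}\le|I_{\underline a}|\le C\,\Lambda^{-n}$'' is self-contradictory for large $n$: with $\lambda\le\Lambda$ one has $\lambda^{-n}\ge\Lambda^{-n}$, so no positive constants $c,C$ can make this hold for all $n$. Taking $\lambda=\inf|\psi'|$ and $\Lambda=\sup|\psi'|$, the correct two-sided estimate is $c\,\Lambda^{-n}\le|I_{\underline a}|\le C\,\lambda^{-n}$, and indeed you later use the lower bound in the correct form $\ge c\,\Lambda^{-kL}$. Second, and more substantively, your displayed inequality
\[
\sum_{|\underline a|=n}|I_{\underline a}|^{s}\ \le\ \Bigl(\max_{|\underline a|=n}|I_{\underline a}|\Bigr)^{s-1}\sum_{|\underline a|=n}|I_{\underline a}|
\]
points the wrong way for $s<1$: since $t\mapsto t^{s-1}$ is decreasing, one has $|I_{\underline a}|^{s-1}\ge(\max|I_{\underline a}|)^{s-1}$, so the line as written is actually a \emph{lower} bound. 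The fix is to replace $\max$ by $\min$ and invoke the lower length bound $\min_{|\underline a|=n}|I_{\underline a}|\ge c\,\Lambda^{-n}$; then $(\min|I_{\underline a}|)^{s-1}\le c^{s-1}\Lambda^{n(1-s)}$ and the remainder of your computation, including the threshold $s>1-\log(1/\tau)/\log\Lambda$, goes through unchanged. With these two corrections the argument is complete.
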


\begin{proof}[\textbf{Proof of Theorem \ref{L.Principal}}]

We construct the Cantor sets inductively. 
Denote $L_1=[-1/2,0)$ and $L_2=(0,1/2]$, and pick  any interval $I_1\subset L_1$. Lemma \ref{LEO} implies that there is an iterate $f^{n^1}$ of $f$ such that $f^{n^1}\colon I_1\to L_1$ is a diffeomorphism. Let $\{J_1^{1},J_2^{1}\}$ be the complementary intervals in $L_1$ of $I_1$. Again  lemma \ref{LEO} implies that there are $I_1^1\subset J_1^1$, $I_2^1\subset J_2^1$, $n^1_1$ and $n_2^1$ such that 
$f^{n^1_i}\colon I_i^1\to L_1$ is a diffeomorphism.

Let  $\{J_1^{11},J_1^{12}\}$ be the complementary intervals of $I_1^{1}$ in $J_1^{1}$ and $\{J_2^{11},J_2^{12}\}$ be the complementary intervals  $I_2^{1}$ in $J_2^{1}$.

Continuing with this process, in the $k$-th step, we obtain $r_k=2^k-1$ intervals $I_1,\dots I_{r_k}$ such that, for each $i\in \{1,\cdots,k\}$, there is $n_i$  so that $f^{n_i}\colon I_{r_i}\to L_1$ is a diffeomorphism. 

Now, let  $\{J_1^{(k)},\dots J^{(k)}_{r_k+1}\}$ be the complementary intervals of $\ds\bigcup_{i=1}^{r_k}I_i$ in $L_1$ and $\mu$ be an the invariant measure given by the lemma \ref{ACM}, which is absolutely continuous w.r.t. Lebesgue and thus, there is a constant $c$ such that  
\begin{equation}\label{E0L3}
\mu(I)\leq c\, m(I)=c\,|I|,
\end{equation}
for any interval $I$.  
Take $\epsilon_{k}=\frac{1}{c}\ds\min_{i}\{\mu(J_{i}^{(k)})\}\leq \ds\min_{i}\{|J_{i}^{(k)}|\}$ and put $m_k=\lfloor\frac{1}{\epsilon_k}\rfloor$ the integer part of $\frac{1}{\epsilon_k}$, that is, $m_k\leq \frac{1}{\epsilon_k}<m_k+1$.\\
Next, split each intervals $J_{i}^{(k)}$ in $2^{m_k}$ intervals $\{J^{(k)}_{i,j}\colon j=1,\dots, 2^{m_k}\}$ pairwise disjoint of equal $\mu$-size. Then, for  $j=1,\dots, 2^{m_k}$, we have
\begin{equation}\label{E1L3}
\frac{1}{2^{m_k}}\geq \frac{|J_{i}^{(k)}|}{2^{m_k}}=|J_{i,j}^{(k)}|\geq\frac{1}{c}\mu(J_{i,j}^{(k)})=\frac{1}{c}\frac{\mu(J_i^{(k)})}{2^{m_k}}\geq \frac{\epsilon_k}{2^{m_k}}>\frac{1}{2^{m_k}(m_k+1)}. 
\end{equation}
Consider the interval $\left(-\frac{1}{m_k^{3}}, \frac{1}{m_k^{3}}\right)$. Since $\mu$ is $f$-invariant, inequality  (\ref{E0L3}) implies that   
\begin{eqnarray}\label{E2L3}
\mu\left(\ds \bigcup_{j=1}^{4m_k}f^{-j}\left(-\frac{1}{m_k^{3}}, \frac{1}{m_k^{3}}\right) \right) &\leq &\ds \sum_{j=0}^{4m_k}\mu\left( f^{-j}\left(-\frac{1}{m_k^{3}}, \frac{1}{m_k^{3}}\right)\right)=\sum_{j=0}^{4m_k}\mu\left(-\frac{1}{m_k^{3}}, \frac{1}{m_k^{3}}\right) \nonumber\\
&\leq &2\,c\sum_{j=0}^{4m_k}\frac{1}{m_k^{3}}=2c\left(\frac{4m_k+1}{m_k^{3}}\right).
\end{eqnarray}

In what follows, given $A\subset \RR,\,\,\# A$ 
denotes the cardinality of $A$. 
\begin{claim} \label{claim1}For any $k$ and any $1\leq i\leq r_k+1$ there is a set $\mathcal{R}_i\subset \{1,\dots, 2^{m_k}\}$ with $\#\mathcal{R}_{i}=2^{m_k-1}$, such that for each $r\in \cR_i$ there is a point $x\in J^{(k)}_{i,r}$ such that  
$$x\notin \ds \bigcup_{j=1}^{4m_k}f^{-j}\left(-\frac{1}{m_k^{3}}, \frac{1}{m_k^{3}}\right).$$ 
\end{claim}

\begin{proof}The idea of the proof is to count the number of intervals that does not satisfy this property. 
 To do that, consider the set
 $$\cR_i^{C}:=\left\lbrace j \ : J_{i,j}^{(k)}\subset \ds \bigcup_{j=0}^{4m_k}f^{-j}\left(-\frac{1}{m_k^{3}}, \frac{1}{m_k^{3}}\right)\right\rbrace. 
 $$ 
We want  show that 
$\#\cR_i^{C}<2^{m_k-1}$. 
For this we proceed as follows.
Put $\#\cR_i^{C}=2^{m_k-n_k}+N_k$ with $0\leq N_k< 2^{m_k-n_k}$ and {let $\underline{j}\in \{1,\dots, 2^{m_k}\}$. Then, by the definition of $J_{i,{j}}^{(k)}$, we obtain $\mu(J_{i,\underline{j}}^{(k)})= \mu(J_{i,{j}}^{(k)})$ for all $j\in\cR_{i}^{C}$. } 
Hence, equations (\ref{E1L3}) and (\ref{E2L3}) imply that 
$$\frac{1}{2^{m_k}(m_k+1)}(2^{m_k-n_k}+N_k) <\mu(J_{i,\underline{j}}^{(k)})\cdot \#\cR_i^{C}\leq\mu\left( \ds\bigcup_{j\in \cR_i^{C}}J_{i,j}^{(k)}\right) \leq 2c\left(\frac{4m_k+1}{m_k^{3}}\right).$$
Hence we have 
$$\frac{1}{2^{n_k}}\leq \frac{1}{2^{n_k}}+\frac{N_k}{2^{m_k}}\leq 2c\cdot\frac{(4m_k+1)(m_k+1)}{m_k^{3}}\leq \frac{20c}{{m_k}}
$$
which implies that if $m_k$ is large enough ($m_k> 40 c$), then $n_k$ should be bigger than $1$, i.e., $n_k>1$.

Now $N_k < 2^{m_k-n_k}$ implies that $\frac{N_k}{2^{m_k}} < \frac{1}{2^{n_k}}$ and as $n_k > 1$, we get 
$$\frac{1}{2^{n_k}}+\frac{N_k}{2^{m_k}}<\frac{2}{2^{n_k}}\leq \frac{1}{2}.$$
Thus $\#\cR_i^{C}=2^{m_k-n_k}+N_k<2^{m_k-1}$ and this concludes the proof of claim \ref{claim1}.
\end{proof}
\begin{claim}\label{1'}
Consider the set $\cR_{i}^{+}=\{r\in \cR_i:|J_{i,r}^{(k)}|\geq\frac{1}{3m_k^3}\}$. Then  $\#\cR_{i}^{+}<2^{m_k-2}$.
\begin{proof}
As the intervals $J_{i,j}^{(k)}$ are pairwise disjoints, if $\#\cR_{i}^{+}\geq 2^{m_k-2}$ then 
$$1\geq \vert\bigcup_{j\in \cR_{i}^{+}}J_{i,j}^{(k)}\vert\geq \frac{2^{m_k-1}}{3m_k^3},$$
which implies a contradiction {for} $m_k$  large enough.
\end{proof}
\end{claim}
The above claim ensure that the set $\widetilde{\cR}_{i}:=\cR_{i}\setminus \cR_{i}^{+}$ has cardinality $\#\widetilde{\cR}_{i}\geq 2^{m_k-2}$.
\begin{claim}\label{claim2}  For all $r\in \widetilde{\cR}_i$  there is $j(i,r) \, {\in \{1,\dots, 4m_k\}}$ minimal, such that 
\begin{equation}\label{E3L3}
|f^{j(i,r)}(J_{i,r}^{(k)})|>\frac{1}{3m_k^{3}}.
\end{equation}
\end{claim}
\begin{proof} 
\indent Let  $r\in \widetilde{\cR}_{i}$, then if $|f^{s}(J_{i,r}^{(k)})|\geq\frac{1}{m_k^{3}}>\frac{1}{3m_k^{3}}$ for some $s\in j=1,\dots, 4m_k$, we are done.
Otherwise, assume that there is $s\in \{1,\dots,4m_k\}$ such that $|f^{t}(J_{i,r}^{(k)})|<\frac{1}{m_k^{3}}$ for all $1\leq t\leq s$.
 If $0\in f^{t_0}(J_{i,r}^{(k)})$ for some  $1\leq t_0\leq s$,   claim \ref{claim1} {implies that} there is $x_r\in J_{i,r}^{(k)}$ such that $x_r\notin f^{-j}\left(-\frac{1}{m_k^{3}}, \frac{1}{m_k^{3}}\right)$ for $j=1,\dots, 4m_k$, {and so} we get  $|f^{t_0}(J_{i,r}^{(k)})|>\frac{1}{m_k^3}$, contradicting our hypothesis.
Thus $0\notin f^{t}(J_{i,r}^{(k)})$ for all $1\leq t\leq s$. Since $f^{s}$ acts as a diffeomorphism on $J_{i,r}^{(k)}$ with derivative $|(f^{s})'|>\eta^s>2^{s/2}$ { and equation (\ref{E1L3}) holds}, {we obtain}  
 $$
\frac{1}{m_k^{3}} \geq |f^s(J_{i,j}^{(k)})| \geq \frac{\eta^s}{2^{m_k}(m_k+1)}\geq\frac{2^{s/2}}{2^{m_k}(m_k+1)}=
 \frac{2^{s/2 - m_k}}{m_k + 1}  \Longrightarrow s/2-m_k < 0 
 \Longrightarrow s< 2m_k.
 $$
If $|f^{s+1}(J_{i,r}^{(k)})|>\frac{1}{3m_k^{3}}$, then we are done. Otherwise, if $|f^{s+1}(J_{i,r}^{(k)})|\leq\frac{1}{3m_k^{3}}<\frac{1}{m_k^{3}}$,  reasoning as before, we get that $0\notin f^{s+1}(J_{i,r}^{(k)})$. Since $0\notin f^{s+1}(J_{i,r}^{(k)})$, then $f$ acts as a diffeomorphism on $f^{s}(J_{i,r}^{(k)})$ with derivative $|f'|>\eta$, which allows to state that $|f^{s+1}(J_{i,r}^{(k)})|>\eta|f^{s}(J_{i,r}^{(k)})|$. Again, if $|f^{s+2}(J_{i,r}^{(k)})|>\frac{1}{3m_k^{3}}$, we are done. Otherwise, if $|f^{s+2}(J_{i,r}^{(k)})|\leq\frac{1}{3m_k^{3}}<\frac{1}{m_k^{3}}$, and reasoning as before, we get that $0\notin f^{s+2}(J_{i,r}^{(k)})$ and then 
$$|f^{s+2}(J_{i,r}^{(k)})|>\eta|f^{s+1}(J_{i,r}^{(k)})|>\eta^2|f^{s}(J_{i,r}^{(k)})|.$$
Using this argument recursively, if  $|f^{s+2m_k-1}(J_{i,r}^{(k)})|\leq\frac{1}{3m_k^{3}}<\frac{1}{m_k^{3}}$, then $0\notin f^{s+2m_k-1}(J_{i,r}^{(k)})$ and it holds 
$$|f^{s+2m_k}(J_{i,r}^{(k)})|>\eta|f^{s+2m_k-1}(J_{i,r}^{(k)})|>\cdots>\eta^{2m_k}|f^{s}(J_{i,r}^{(k)})| \ \
\text{and} \ \ |f^{s}(J_{i,r}^{(k)})|>\eta^{s}|J_{i,r}^{(k)}|.$$
Thanks to inequality (\ref{E1L3}) {we  conclude} that   
$$|f^{s+2m_k}(J_{i,r}^{(k)})|>\frac{2^{m_k}\eta^s}{2^{m_k}(m_k+1)}=\frac{\eta^s}{m_k+1}>\frac{1}{3m_k^3},$$
{finishing the proof of claim \ref{claim2}.}
 \end{proof}
 Now consider the sequence of intervals $f^{j(i,r)}(J_{i,r}^{(k)})$ given by claim \ref{claim2}.\\ Since $|f^{j(i,r)}(J_{i,r}^{(k)})|>\frac{1}{3m_k^3}$ for all $r\in \widetilde{\cR}_i$, we can apply lemma \ref{L2-ALEO} and corollary \ref{C1-ALEO} to get the following: 

\begin{claim}\label{claim3}
For all $r\in \widetilde{\cR}_{i}$, there is an interval $I_{i,r}^{(k)}\subset f^{j(i,r)}(J_{i,r}^{(k)})$ and integer $m_{i,r}^{(k)}$ such that  $f^{m_{i,r}^{k}}\colon I_{i,r}^{(k)} \to L_1^{a}$ is a diffeomorphism, 
$0\notin f^{s}(I_{i,r}^{(k)})$ for $s=0,1, \dots, m_{i,r}^{(k)}-1$, \newline $m_{i,r}^{(k)}\leq D\log m_k$ and $\ds\sup_{x\in f^{s}(I_{i,r}^{(k)})}|f'|=E\cdot m_k^{\xi}$.
\end{claim}

\begin{claim}\label{claim4}Let $\tilde{I}_{i,r}^{(k)}\subset J_{i,r}^{(k)}$ with $f^{j(i,r)}(\tilde{I}_{i,r}^{(k)})=I_{i,r}^{(k)}$, where $I_{i,r}^{(k)}$ is as in $claim \, \ref{claim3}$.
Then, there is a constant $H>0$, depending only of $f$, such that 
\begin{equation}\label{E5L3}
|\tilde{I}_{i,r}^{k}|\geq H|{I}_{i,r}||J_{i,r}^{(k)}|.
\end{equation}
\end{claim}
\begin{proof} First note that the Mean Value Theorem implies
\begin{equation}\label{E9'}
\frac{|\tilde{I}_{i,r}^{(k)}|}{|J_{i,r}^{(k)}|}=\frac{|(f^{j(i,r)})'(y)|}{|(f^{j(i,r)})'(x)|}\cdot\frac{|{I}_{i,r}|}{|f^{j(i,r)}(J_{i,r}^{(k)})|} \ \ \text{for some} \ \ x\in \tilde{I}_{i,r}^{(k)} \, ; \,y\in J_{i,r}^{(k)}.
\end{equation}

It is enough to bound $\ds\frac{|(f^{j(i,r)})'(y)|}{|(f^{j(i,r)})'(x)|}$, {since equality (\ref{E9'}) implies that inequality (\ref{E5L3}) holds.}
For this sake, we proceed as follows.
As  $j(i,r)$ is minimal satisfying  (\ref{E3L3}) we get 
\begin{equation}\label{E5'L3}
|f^{s}(J_{i,r}^{(k)})|<\frac{1}{3m_k^{3}} \ \ \text{for} \ \ s=0,\dots, j(i,r)-1.
\end{equation}
\noindent This implies, reasoning as in the proof of claim \ref{claim2}, that  $0\notin f^{s}(J_{i,r}^{(k)})$ for $s=0,\dots, j(i,r)-1$ and hence $f^{s}|_{J_{i,r}^{(k)}}$ is a diffeomorphism for $s=0,\dots, j(i,r)-1$. 

Observe that by  claim \ref{claim1}, for each $s\in \{0,\dots, j(i,r)-1\}$, there is $x_s\in J_{i,r}^{(k)}$ such that $f^{s}(x_s)\notin (-\frac{1}{m_k^{3}},\frac{1}{m_k^{3}})$, and so, if 
 $d(\cdot,\cdot)$ is the distance between sets, by equation (\ref{E5'L3}) we conclude that 
\begin{equation}\label{E5''L3}
\inf_{x\in {J_{i,r}^{(k)}}}|f^{s}(x)|=:d(f^{s}({J_{i,r}^{(k)}}),\{0\})>\frac{1}{2m_k^{3}}.
\end{equation}
Now we have
\begin{eqnarray}\label{E5'''L3}
\Bigg|\log\frac{(f^{j(i,r)})'(y)}{(f^{j(i,r)})'(x)}\Bigg| &=& \Bigg|\ds \sum_{s=0}^{j(i,r)-1}\log(f'(f^s(y))-\log(f'(f^s(x))\Bigg|\nonumber \\
&\leq & \sum_{s=0}^{j(i,r)-1}\vert\log(f'(f^s(y))-\log(f'(f^s(x))\vert \nonumber\\
&\leq_{\tiny{by \, \, MVT}} & \sum_{s=0}^{j(i,r)-1} \frac{|f''(f^s(z_s)|}{|f'(f^s(z_s))|} |f^s(y) - f^s(x)|,  \,\,\,                \text{for some} \ \ z_s\in J_{i,r}^{(k)}, \nonumber\\
&\leq_{\tiny{by \, \, (\ref{E3S2})}}&\sum_{s=0}^{j(i,r)-1}C \cdot C_1 \cdot \frac{1}{| f^s(z_s)|} \cdot  | f^s(J_{i,r}^{(k)})|, \text{ where $C, C_1$ depend only on $f$} \nonumber\\
&\leq_{\tiny{by \, \, (\ref{E5''L3})}} &\sum_{s=0}^{j(i,r)-1} C \cdot  C_1 \cdot 2m_k^3\cdot | f^s(J_{i,r}^{(k)})|. 
\end{eqnarray}
Recall that equation (\ref{E3S2}) implies that {$\frac{\vert f''(x)\vert}{\vert f'(x)\vert}\leq\frac{ C\cdot C_1}{|x|}$}, with $C_1$, $C$ depending only  of $f$.
Thus, since $f^{s}|_{J_{i,r}^{(k)}}$ is a diffeomorphim for each $s\in \{0,\dots, j(i,r)-1\}$, and satisfies  Property (f2) (see subsection \ref{ProOneMap}), we get
$$|f^{j(i,r)-1}(J_{i,r}^{(k)})|\geq\sqrt{2}|f^{j(i,r)-2}(J_{i,r}^{(k)})|\geq\sqrt{2}^{2}|f^{j(i,r)-3}(J_{i,r}^{(k)})|\geq\cdots>\sqrt{2}^{s}|f^{j(i,r)-(s+1)}(J_{i,r}^{(k)})|.$$
{Making} the change of variable $t=j(i,r)-(s+1)$,  the last inequality provides 
\begin{equation}\label{E5''''L3}
|f^{j(i,r)-1}(J_{i,r}^{(k)})|\geq (\sqrt{2})^{j(i,r)-t-1}|f^{t}(J_{i,r}^{(k)})|.
\end{equation}
Using the inequality (\ref{E5''''L3}) together with  (\ref{E5'L3}) and replacing in the last term of equation (\ref{E5'''L3}) we get that 
\begin{equation}\label{e1-claim4}
\Bigg|\log\frac{(f^{j(i,r)})'(y)}{(f^{j(i,r)})'(x)}\Bigg|\leq C\cdot C_1\sum_{t=0}^{j(i,r)-1}2m_{k}^{3}\cdot \frac{1}{3m_{k}^3}
\left( \frac{1}{\sqrt{2}}\right)^{j(i,r)-t-1}< \frac{2}{3}\cdot C\cdot C_1\cdot\sqrt{2}(\sqrt{2}+1).
\end{equation}
Setting $H:=e^{-\frac{2}{3}\cdot C\cdot C_1\cdot \sqrt{2}\cdot (\sqrt{2}+1)}$, we bound 
$\ds\frac{|(f^{j(i,r)})'(y)|}{|(f^{j(i,r)})'(x)|}$ and  inequality (\ref{E9'}) follows,  implying that inequality (\ref{E5L3}) holds. The proof of claim \ref{claim4} is finished.
\end{proof} 

The next step is to construct the regular Cantor with Hausdorff dimension close to $1$.
For this sake, we consider the collection of surjective maps 
$$\{g_{i,r}=f^{m_{i,r}^{(k)}}\circ f^{j(i,r)}\colon \tilde{I}_{i,r}^{(k)}\to L_1^{a}\,\, | \,\, r\in \widetilde{\cR}_{i}\}.$$
Let $\ds g_{ki}:L_k^{i}=\bigcup_{r\in\widetilde{\cR}_{i}}\tilde{I}_{i,r}^{(k)}\to L_1^a$ be defined by $g_{ki}=g_{i,r}|_{\tilde{I}_{i,r}^{(k)}}$ 
and $C_k^ i$ be the regular Cantor set defined by the intervals $\tilde{I}_{i,r}^{(k)}$ and $g_{i,r}$, i.e. 
\begin{equation*}
C_k^{i}=\bigcap_{n\geq1}g_{ki}^{-n}(L_k^{i}).
\end{equation*}
The final step is to show that $HD(C_k^{i})\to 1$ as $k\to+\infty$. For this, we {use the same strategy given in \cite[Theorem 3]{PT}.  In fact, }consider the number 
$$\Lambda_{1,\tilde{I}_{i,r}^{(k)}}=\ds\sup_{x\in \tilde{I}_{i,r}^{(k)}}|g'_{i,r}|$$ and define $d_1\in [0,1]$ by  
\begin{equation}\label{eq2-claim4}
\sum_{r\in \widetilde{\cR}_{i}}(\Lambda_{1,\tilde{I}_{i,r}^{(k)}})^{-d_1}=1.
\end{equation}

It is shown in \cite[pp 69-70]{PT} that $d_1\leq HD(C_k^{i})$. Therefore, we can estimate  $HD(C_k^{i})$ by computing $d_1$.\\
{To do that, note that } $g_{i,r}=f^{m_{i,r}^{(k)}}\circ f^{j(i,r)}$, { and to simplify notations,}  denote $h_1=f^{m_{i,r}^{(k)}}$ and $h_2=f^{j(i,r)}$. Then 
$$\ds\Lambda_{1,\tilde{I}_{i,r}^{(k)}}\leq \sup_{I_{i,r}^{(k)}}|h_1'|\cdot \sup_{\tilde{I}_{i,r}^{(k)}}|h_2'|.$$
Corollary \ref{C1-ALEO} {gives} that 
$\ds\sup_{I_{i,r}^{(k)}}|h_1'|\leq E^{D\log m_k}\cdot m_k^{F\log m_k}$, where $F=D\cdot \xi$. 
To estimate the {the supremum of $|h_2'|$, $\sup |h_2'|$, in
$\tilde{I}_{i,r}$}, we note that by  the proof of claim \ref{claim4} the function  $h_2|_{\tilde{I}_{i,r}^{(k)}}$ has bounded distortion.
 Thus 
$$\sup_{\tilde{I}_{i,r}^{(k)}}|h_2'|\leq H^{-1}\inf_{\tilde{I}_{i,r}^{(k)}}|h_2'|,$$
where $H:=e^{-\frac{2}{3}\cdot C\cdot C_1\cdot \sqrt{2}\cdot (\sqrt{2}+1)}$ (see equation (\ref{e1-claim4})).
Since $h_2(\tilde{I}_{i,r}^{(k)})={I}_{i,r}^{(k)}$, {the mean value theorem implies}
\begin{equation*}\label{eq1-final}
\inf_{\tilde{I}_{i,r}^{(k)}}|h_2'|\leq \frac{|{I}_{i,r}^{(k)}|}{|\tilde{I}_{i,r}^{(k)}|}\overset{\mathrm{\, by \, (\ref{E5L3})}}{\leq} \frac{H^{-1}}{|J_{i,r}^{(k)}|}\overset{\mathrm{\, by \, (\ref{E1L3})}}{\leq} H^{-1}2^{m_k}(m_k+1).
\end{equation*}
The last two {inequalities  imply} that 
\begin{equation}\label{eq3-claim4}
\ds\Lambda_{1,\tilde{I}_{i,r}^{(k)}}\leq H^{-2}\cdot E^{D\log m_k}\cdot m_k^{F\log m_k}\cdot 2^{m_k}(m_k+1)=2^{(1+o(1))m_k},
\end{equation}
since $\ds \lim_{k\to \infty}\dfrac{\log H^{-2}+ D\log m_k\cdot \log E+ F\cdot(\log m_k)^2+ m_k \log 2+ \log (m_k+1)}{m_k}=\log 2$.\\
Therefore, since $\# \widetilde{\cR}_{i}\geq 2^{m_k-2}$,  inequalities  (\ref{eq2-claim4}) and (\ref{eq3-claim4}) imply that  
$$2^{m_k-2}\cdot \left(\frac{1}{2^{(1+o(1)) m_k}}\right)^{d_1}\leq 1=\sum_{r\in \widetilde{\cR}_{i}}(\Lambda_{1,\tilde{I}_{i,r}^{(k)}})^{-d_1}.$$ 
Hence  
\begin{eqnarray*}
(m_k-2)\log 2&\leq& (1+o(1))\cdot m_k\cdot d_1\cdot\log 2   \Longrightarrow\\
1-o(1)&=&\frac{m_k-2}{m_k}\leq (1+o(1))\cdot d_1 \Longrightarrow\\
1-o(1)&\leq&(1+o(1))\cdot d_1. 
\end{eqnarray*}
Thus, $1-o(1)\leq d_1\leq 1$. Now we define $C_k:=\cup_{i}C_k^i$ which satifies the condition of the theorem,   finishing the proof of theorem \ref{L.Principal}.
\end{proof}

As an immediate consequence of theorem \ref{L.Principal}, we have the following 
\begin{maincorollary}\label{Dim. of bi-dim attractor}
The Hausdorff dimension of the bi-dimensional attractor for the Poincar\'e map $P$, $\Lambda_{P}$,  is strictly greater than $1$. 
\end{maincorollary}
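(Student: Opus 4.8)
The plan is to convert each fat Cantor set $C_k$ produced by Theorem \ref{L.Principal} into an honest $C^{1+}$ hyperbolic horseshoe contained in $\Lambda_P$, and then to extract its Hausdorff dimension from the product formula of Proposition \ref{Prop.2}.

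First I would unwind the construction carried out in the proof of Theorem \ref{L.Principal}. One has $C_k=\bigcup_i C_k^i$, where each $C_k^i$ is the regular Cantor set defined by the expanding Markov map $g_{ki}\colon L_k^i=\bigcup_{r\in\widetilde{\cR}_i}\tilde I_{i,r}^{(k)}\to L_1^a$ whose branches are iterates $g_{i,r}=f^{N_{i,r}}$ of the one--dimensional Lorenz map $f$, with $N_{i,r}=m_{i,r}^{(k)}+j(i,r)\le 5m_k$, which on $\widetilde{\cR}_i$ is essentially a full shift on $\#\widetilde{\cR}_i\ge 2^{m_k-2}$ symbols, and which (this is exactly the point of Claims \ref{claim3}, \ref{claim4} and Remark \ref{R1}) has the property that the whole finite orbit $\tilde I_{i,r}^{(k)},f(\tilde I_{i,r}^{(k)}),\dots,f^{N_{i,r}-1}(\tilde I_{i,r}^{(k)})$ stays at distance at least $\frac{1-a}{6m_k^3}$ from the singularity $0$. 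Since $P(x,y)=(f(x),g(x,y))$ is a skew product whose fibre derivative satisfies $|\partial_y g(x,y)|=|x|^\beta<1$, with the domination uniform near $X$ (Proposition \ref{Proprobusto}), avoiding $\Gamma=\{x=0\}$ is precisely what makes each $P^{N_{i,r}}$ a well--defined $C^{1+}$ map on the box $\tilde I_{i,r}^{(k)}\times I$, expanding in $x$ and uniformly contracting in $y$; hence
$$G_{ki}:=\bigsqcup_{r\in\widetilde{\cR}_i}P^{N_{i,r}}\big|_{\tilde I_{i,r}^{(k)}\times I}$$
is a $C^{1+}$ skew product over $g_{ki}$, and the set $\Delta_k^i$ of points admitting a complete $G_{ki}$--orbit inside $\bigcup_{r}\tilde I_{i,r}^{(k)}\times I$ is a transitive hyperbolic basic set (vertical fibres are the exact stable direction). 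I would then check $\Delta_k^i\subset\Lambda_P$: a point of $\Delta_k^i$ has a bi--infinite $G_{ki}$--orbit in $L_1\times I\subset S^\star$, which interpolates to a bi--infinite $P$--orbit meeting only sets of the form $f^{l}(\tilde I_{i,r}^{(k)})\times I\subset S^\star$; in particular it admits backward orbits of every length in $S^\star$, so it belongs to $\bigcap_{j\ge1}P^{j}(S^\star)\subset\Lambda_P$.

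Next comes the dimension bookkeeping. The unstable Cantor set of $\Delta_k^i$ is exactly $C_k^i$, so $HD(K^u(\Delta_k^i))=HD(C_k^i)$. The stable Cantor set $K^s(\Delta_k^i)$ is a regular Cantor set whose symbolic dynamics is that of $\Delta_k^i$ (a full shift on $\ge 2^{m_k-2}$ symbols) and the $r$--th branch of whose defining map has derivative $\prod_{l=0}^{N_{i,r}-1}|f^l(x)|^{-\beta}\le\big(\tfrac{6m_k^3}{1-a}\big)^{\beta N_{i,r}}$ (the reciprocal of the fibre contraction of $P^{N_{i,r}}$); hence the supremum of these branch derivatives has logarithm $\le 20\beta\,m_k\log m_k$ for $k$ large. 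The elementary lower bound for the dimension of such a Cantor set then gives
$$HD\big(K^s(\Delta_k^i)\big)\ \ge\ \frac{\log\#\widetilde{\cR}_i}{20\beta\, m_k\log m_k}\ \ge\ \frac{(m_k-2)\log 2}{20\beta\, m_k\log m_k}\ \sim\ \frac{\log 2}{20\beta\log m_k}.$$
On the other hand, the estimate $\Lambda_{1,\tilde I_{i,r}^{(k)}}\le 2^{(1+o(1))m_k}$ established in the proof of Theorem \ref{L.Principal}, in which the $o(1)$ term is of order $(\log m_k)^{2}/m_k$, gives $1-HD(C_k^i)=O\big((\log m_k)^{2}/m_k\big)$. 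Since $1/\log m_k$ eventually dominates $(\log m_k)^{2}/m_k$, for all large $k$ one has $HD(K^s(\Delta_k^i))>1-HD(C_k^i)$, and therefore, by Proposition \ref{Prop.2} applied to the basic set $\Delta_k^i$,
$$HD(\Lambda_P)\ \ge\ HD(\Delta_k^i)\ =\ HD(C_k^i)+HD\big(K^s(\Delta_k^i)\big)\ >\ HD(C_k^i)+\big(1-HD(C_k^i)\big)\ =\ 1,$$
which is the assertion.

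The step I expect to demand the most care is precisely this last comparison of rates: one must extract from the proof of Theorem \ref{L.Principal} not merely that $HD(C_k)\to1$ but the \emph{speed} of convergence, and simultaneously control the speed at which the stable dimension of the auxiliary horseshoe degenerates. The branches $P^{N_{i,r}}$ inherited from the $C_k$--construction are long iterates ($N_{i,r}$ of order $m_k$) that crush the $y$--direction by a factor which is superexponentially small in $m_k$, so a priori $HD(K^s(\Delta_k^i))\to0$; the argument survives only because this decay is as slow as $1/\log m_k$, whereas $1-HD(C_k^i)$ decays like $(\log m_k)^{2}/m_k$. A minor technical caveat is that Proposition \ref{Prop.2} is stated for $C^{2}$ surface diffeomorphisms while $G_{ki}$ is only $C^{1+}$ away from the singularity (cf. (\ref{E3S2})); this is harmless, since one only needs the inequality $HD(\Delta_k^i)\ge HD(K^u(\Delta_k^i))+HD(K^s(\Delta_k^i))$, which follows from the bounded distortion provided by (\ref{E3S2}), the general bound $HD(A\times B)\ge HD(A)+HD(B)$, and the (at least $C^{1}$) local product structure of $\Delta_k^i$.
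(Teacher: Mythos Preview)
Your argument is essentially correct, but it is considerably more laborious than the paper's, and the extra labour stems from overlooking one word in the statement of Theorem~\ref{L.Principal}: the family $\{C_k\}$ is \emph{increasing}. The paper simply sets $\Lambda_P^k=\{(x,y)\in\Lambda_P: x\in C_k\}$, observes that this is a basic set for $P$ with unstable Cantor set $C_k$, and then uses the nesting $\Lambda_P^k\subset\Lambda_P^{k+1}$ to conclude that the associated stable Cantor sets ${}_sK_P^k$ are nested as well. Hence $HD({}_sK_P^k)\ge HD({}_sK_P^1)$ for every $k$, and Proposition~\ref{Prop.3} gives a \emph{fixed} $\xi>0$ with $HD({}_sK_P^1)>\xi$. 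Proposition~\ref{Prop.2} then yields
\[
HD(\Lambda_P^k)=HD(C_k)+HD({}_sK_P^k)\ge HD(C_k)+\xi,
\]
which exceeds $1$ as soon as $HD(C_k)>1-\xi$. No quantitative rate of convergence in Theorem~\ref{L.Principal} is needed, and the proof uses only the \emph{statements} of Theorem~\ref{L.Principal} and Propositions~\ref{Prop.2}--\ref{Prop.3}.

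By contrast, you re-enter the proof of Theorem~\ref{L.Principal} to extract (i) the lower bound $\tfrac{1-a}{6m_k^3}$ on the distance of the orbit to $0$, (ii) the upper bound $N_{i,r}\le 5m_k$ on the return time, and (iii) the $O((\log m_k)^2/m_k)$ control on $1-HD(C_k^i)$, and then balance the decay $HD(K^s(\Delta_k^i))\asymp 1/\log m_k$ against $1-HD(C_k^i)$. This works, and your acknowledgement that this comparison is the delicate step is well placed; but it is exactly the step the paper sidesteps via monotonicity. What your approach buys is an explicit (if crude) lower bound on $HD(\Lambda_P)-1$ in terms of $m_k$, which the paper's soft argument does not give. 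What the paper's approach buys is a two-line proof that is robust to the internal constants of the $C_k$ construction and does not require revisiting Claims~\ref{claim2}--\ref{claim4}. Your caveat about Proposition~\ref{Prop.2} being stated for $C^2$ diffeomorphisms is fair and your workaround (only the inequality $HD\ge HD(K^u)+HD(K^s)$ is needed, via local product structure and bounded distortion) is the right one; the paper glosses over this point.
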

\begin{proof}
For this, let $\Gamma=\{(x,y,1):x=0\}$ and
$$\Lambda_{P}=\overline{\bigcap_{i\geq 1}P^{i}(S\setminus \Gamma)},\,\,
\mbox{ be as in equation (\ref{atrator})}.
 $$

\noindent For each $k> 0$, let $C_k$ be the regular Cantor set given by theorem \ref{L.Principal} and define 
\begin{equation}\label{E1SBA}
\Lambda_{P}^{k}=\{(x,y)\in \Lambda_P\colon x\in C_k\}.
\end{equation}

\noindent Notice that by construction, each $C_k$ is a regular Cantor set (see comments after definition \ref{Regular Cantor set}) and so, for each $k$, $\Lambda_{P}^{k}$ is a basic set for $P$. Moreover, $\Lambda_{P}^{k}\subset \Lambda_{P}^{k+1}$ and by proposition \ref{Prop.2}
$$HD(\Lambda_{P}^{k})=HD(_uK_P^{k})+HD(_sK_P^{k})=HD(C_k)+HD(_sK_P^{k}),$$
where $_sK_P^{k}$ and $_uK_P^{k}=C_k$ are the stable and unstable Cantor sets associated to the basic set $\Lambda_{P}^{k}.$  
As $_sK_P^{k}$ is a regular Cantor set,  by proposition \ref{Prop.3}, there is $\xi>0$ such that $HD(_sK_P^{1})>\xi$. Hence $$HD(\Lambda_{P}^{k})=HD(C_k)+HD(_sK_P^{k})\geq HD(C_k)+HD(_sK_P^{1})>HD(C_k)+\xi.$$
Thus, theorem \ref{L.Principal} implies that  $H(\Lambda_{P}^{k})>1$ for $k$ large enough.  Since $\Lambda_{P}^{k}\subset \Lambda_{P}$, this finishes  the proof of Corollary \ref{Dim. of bi-dim attractor}. 
\end{proof}

\begin{proof}[\textbf{Proof of theorem \ref{T1}}]
Note that the geometric Lorenz attractor $\Lambda$ satisfies
$$\Lambda=\left( \bigcup_{t\in \re}X^{t}(\Lambda_{P})\right) \cup O,
\quad \mbox{where $O$ is the singularity.}
$$
Thus, 
$$HD(\Lambda)\geq 1+HD(\Lambda_P)>2.$$
The proof of theorem \ref{T1} is complete.
\end{proof}
We finished this section by announcing a corollary of the proof of  theorem \ref{L.Principal}  that might be of interest to the reader.
\begin{maincorollary}
If $f$ is a $C^2$ function that satisfies the properties $(f1)-(f3)$ {described in} section $\ref{ProOneMap}$ with  $f(-\frac{1}{2})\neq -\frac{1}{2}$, $f(\frac{1}{2})\neq \frac{1}{2}$
and also satisfies  equation (\ref{E3S2}), then there is an increasing family of regular Cantor sets $C_k$ for $f$ such that 
$$HD(C_k)\to 1 \ \ \text{as} \ \ k\to +\infty.$$
\end{maincorollary}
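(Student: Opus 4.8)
The plan is to observe that the proof of Theorem~\ref{L.Principal}, and every auxiliary result it relies on, never uses that $f$ is the one-dimensional quotient map of a geometric Lorenz flow: it uses only that $f$ is $C^2$, that $f$ satisfies $(f1)$--$(f3)$ from Section~\ref{ProOneMap}, that $f(-1/2)\neq -1/2$ and $f(1/2)\neq 1/2$, and that $f$ satisfies the distortion estimate~(\ref{E3S2}) for some constants $C,C_1>1$ depending only on $f$. These are precisely the hypotheses of the statement, so the construction of Section~\ref{FCSf} applies verbatim; what remains is to re-examine each ingredient and record where each hypothesis is used.

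First I would check the preliminary lemmas. Lemma~\ref{LEO} is already stated for an arbitrary $f$ satisfying $(f1)$--$(f3)$ and uses only the expansion bound $|f'|>\sqrt2$ from $(f2)$ together with the behaviour of $f$ near $0$ and near the endpoints given by $(f1)$; likewise Lemma~\ref{ACM}, which is Viana's theorem, needs only $f\in C^2$ and $(f1)$--$(f3)$. Lemmas~\ref{L1'-ALEO} and~\ref{R2-ALEO} are where the endpoint conditions enter: in the proof of Lemma~\ref{R2-ALEO} the inequality $|f([-1/2,0_1])|<|f([0_1,0))|$ (and its symmetric counterpart) follows from $f(-1/2)\neq -1/2$ (resp. $f(1/2)\neq 1/2$) together with $f(0^-)=1/2$, $f(0^+)=-1/2$ from $(f1)$, and the gap $\psi$ produced there depends only on $f$. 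Consequently Lemma~\ref{L2-ALEO} holds for the present $f$ --- it uses only $\eta=\inf|f'|>\sqrt2$ to fix $a$ with $a^2\eta^2>2$ and $1-a<\kappa$, plus Lemmas~\ref{L1'-ALEO} and~\ref{R2-ALEO} --- and Corollary~\ref{C1-ALEO} holds as well, since its only extra input is the estimate~(\ref{E3S2}), now a hypothesis.

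Next I would verify that the proof of Theorem~\ref{L.Principal} transfers line by line. The inductive construction of $I_1,\dots,I_{r_k}$ uses Lemma~\ref{LEO}; the measure $\mu$ and the bound $\mu(I)\le c|I|$ come from Lemma~\ref{ACM}; Claims~\ref{claim1} and~\ref{1'} are combinatorial and measure-theoretic, using only $f$-invariance of $\mu$ and~(\ref{E0L3}); Claim~\ref{claim2} uses only $(f2)$; Claim~\ref{claim3} is a direct application of Lemma~\ref{L2-ALEO} and Corollary~\ref{C1-ALEO}; Claim~\ref{claim4} uses the Mean Value Theorem, $(f2)$, and the bounded-distortion bound $|f''|/|f'|\le CC_1/|x|$ furnished by~(\ref{E3S2}); and the concluding Hausdorff-dimension estimate follows the argument of \cite[Theorem~3]{PT}, invoking only the general Propositions~\ref{Prop.2} and~\ref{Prop.3}. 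Setting $C_k=\bigcup_i C_k^i$ then gives an increasing family of regular Cantor sets for $f$ with $HD(C_k)\to 1$ as $k\to+\infty$.

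I do not expect a genuine obstacle: the content of the proof is the bookkeeping of verifying that no step of the original argument secretly used a feature particular to the Lorenz quotient map. The two places deserving the most attention are Lemma~\ref{R2-ALEO}, where the hypotheses $f(\pm1/2)\neq\pm1/2$ must do the work previously done by the explicit affine formula for $f$, and the several points where~(\ref{E3S2}) was earlier supplied by Proposition~\ref{Proprobusto} but is now taken as an assumption; once these are in place the remainder is routine and the statement follows.
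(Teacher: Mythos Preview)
Your proposal is correct and matches the paper's approach: the paper gives no separate proof of this corollary, merely announcing it as a consequence of the proof of Theorem~\ref{L.Principal}, and your careful audit of that proof confirms that every step uses only $(f1)$--$(f3)$, the endpoint conditions $f(\pm 1/2)\neq\pm 1/2$ (in Lemma~\ref{R2-ALEO}), and the estimate~(\ref{E3S2}) (in Corollary~\ref{C1-ALEO} and Claim~\ref{claim4}). One small inaccuracy: Propositions~\ref{Prop.2} and~\ref{Prop.3} are not actually invoked in the dimension estimate at the end of Theorem~\ref{L.Principal} --- that argument uses only the $d_1$ computation from \cite[Theorem~3]{PT} --- so you may drop that clause.
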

\section{Lagrange and Markov Spectra: proof of Theorem~ \ref{T2} }\label{sec-spectra}

In this section we prove  theorem \ref{T2}. For this, we first prove  that 
small perturbations of the Poincar\'e map $P$ restricted to $\Lambda_{P}^{k}$, with  $\Lambda_{P}^{k}$
defined at (\ref{E1SBA}), can be realized as Poincar\'e maps of small perturbations of the initial geometric Lorenz flow $X^t$ (lemma \ref{All-Pert}). 
{Then}, taking $k$ such that $HD(\Lambda_{P}^{k})>1$, we recover 
the properties described in \cite{RM} needed to 
apply \cite[Main Theorem]{RM}, obtaining non empty interior in the  Lagrange and Markov spectrum.

We start announcing the main theorem in \cite{RM}
which is a fundamental tool to obtain theorem \ref{T2}. 
Given $A\subset M$, $\interior(A)$ denotes the interior of $A$.
\vspace{0.2cm}

{
\noindent \textbf{Theorem}\,[Main Theorem at \cite{RM}]\emph{Let $\Lambda$ be a horseshoe associated to a $C^2$-diffeomorphism $\varphi$ such that  $HD(\Lambda)>1$. Then there is, arbitrarily close to $\varphi$, a diffeomorphism $\varphi_{0}$ and a $C^{2}$-neighborhood $\mathcal{W}$ of $\varphi_{0}$ such that, if $\Lambda_{\psi}$ denotes the continuation of $\Lambda$ associated to $\psi\in \mathcal{W}$, there is an open and dense set $H_{1}(\psi, \Lambda_{\psi})\subset C^{1}(M,\re)$ such that for all $f\in H_{1}(\psi, \Lambda_{\psi})$, we have 
\begin{equation*}
\interior( \ L(\psi, \Lambda_{\psi}, f ))\neq\emptyset \quad \text{and}
\quad   
\interior( M(\psi, \Lambda_{\psi}, f)) \neq\emptyset.
\end{equation*}
}}

The set $H_{1}(\psi, \Lambda_\psi)$ is described by 
\begin{equation*}\label{E-Descrip}
{H}_{\psi}=\left\{f\in C^{1}(M,\re):\#M_{f}(\Lambda_{\psi})=1, \  z\in M_{f}(\Lambda_{\psi}), \ D\psi_{z}(e_{z}^{s,u})\neq 0\right\},
\end{equation*}
 where $M_{f}(\Lambda_{\psi}):=\{z\in \Delta: f(z)\geq f(x)\ \text{for all} \, x\in \Lambda_{\psi}\}$ is  the set of maximum points of $f$ in $\Lambda_{\psi}$ and $e_{z}^{s,u}$ are unit vectors in $E^{s,u}(z)$,  respectively.  

\subsection{Perturbations of Poincar\'e Map }\label{PPM}

Fixed $k$ with $HD(\Lambda_{P}^{k})>1$. By construction, there is $\epsilon>0$ small so that $d(\Lambda_{P}^{k},\Gamma)>2\epsilon$, where $\Gamma=\{(x,y,1):x=0\}$.
Let $\mathcal{U}_{P}$ be a $C^{2}$ neighborhood of $P$ such that, if $\tilde{P}\in \mathcal{U}_{P}$ and $\Lambda_{\tilde{P}}^{k}$ is the hyperbolic continuation of $\Lambda_{P}^{k}$, then $d(\Lambda_{\tilde{P}}^{k}, \Gamma)>\epsilon$.

The next lemma states that in a neighborhood of $\Lambda_{\tilde{P}}^{k}$, we can recover $\tilde{P}\in \mathcal{U}_{P}$ as a Poincar\'e map associated to a geometric Lorenz flow $\tilde{X}^t$, $C^2$-close to $X^t$.

\begin{Le}\label{All-Pert}
Given $\tilde{P}\in\mathcal{U}_{P}$ there is a geometric Lorenz flow $\tilde{X}^t$,  $C^2$-close to $X^t$, such that the 
restriction to $\Lambda_{\tilde{P}}^{k}$ of the Poincar\'e map associated to $\tilde{X}^t$ 
coincides with the restriction of $\tilde{P}$ to $\Lambda_{\tilde{P}}^{k}$.

\end{Le}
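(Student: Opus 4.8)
\textbf{Proof proposal for Lemma \ref{All-Pert}.}

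The plan is to exploit the structural freedom in the construction of a geometric Lorenz flow described in Section \ref{CGM}. Recall that the return map $P$ is obtained as a composition: the linear flow near the origin sends $S^\star$ onto the surfaces $\Sigma^\pm$ via the explicit map $L(x,y,1)=(\sgn(x),yx^\beta,x^\alpha)$, and then a transition flow outside the cube (modelled by a composition $T_\pm\circ E_{\pm\theta}\circ R_\pm$ of a rotation, an expansion and a translation) carries $\Sigma^\pm$ back to $S$. The crucial observation is that the only place where $\tilde P$ differs from $P$ is in the ``outside the cube'' part, and that part is realised by a vector field $Y$ supported in a region disjoint from the linearising cube $[-1,1]^3$. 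Therefore, to realise $\tilde P$ it suffices to modify the transition flow in that outer region, leaving the linear part (hence the singularity and its eigenvalues $\lambda_i$, and in particular $\alpha,\beta$) untouched.

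First I would fix the small $\epsilon>0$ with $d(\Lambda_{\tilde P}^{k},\Gamma)>\epsilon$ for every $\tilde P\in\mathcal{U}_{P}$, and an open set $V\subset S$ with $\Lambda_{\tilde P}^{k}\subset V$ and $d(V,\Gamma)>\epsilon/2$; note that $V$ pulls back under $L$ to an open set $L(V)\subset\Sigma^{+}\cup\Sigma^{-}$ which stays uniformly away from the cusp points $(\pm1,0,0)$, because $x$ is bounded away from $0$ on $V$. Since $P=(T_\pm\circ E_{\pm\theta}\circ R_\pm)\circ L$ on $S^\star$ and $L$ is a fixed diffeomorphism onto its image away from $\Gamma$, the map $\tilde P\circ L^{-1}$ and the map $P\circ L^{-1}=T_\pm\circ E_{\pm\theta}\circ R_\pm$ are two $C^2$-close diffeomorphisms defined on $L(V)$; call their difference the ``correction'' $\Phi:=(\tilde P\circ L^{-1})\circ(P\circ L^{-1})^{-1}$, a diffeomorphism of a neighbourhood of $(T_\pm\circ E_{\pm\theta}\circ R_\pm)(L(V))\subset S$ that is $C^2$-close to the identity. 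Next I would choose a $C^\infty$ bump function $\rho$ on $S$ equal to $1$ on a neighbourhood of $(P\circ L^{-1})(L(\overline V))$ and supported in a slightly larger set still disjoint from $\Gamma$, and use it to interpolate between $\Phi$ and the identity, obtaining a globally defined diffeomorphism $\widehat\Phi$ of $S$, $C^2$-close to the identity, with $\widehat\Phi=\Phi$ on a neighbourhood of the relevant piece of $\Lambda_P^k$'s orbit and $\widehat\Phi=\mathrm{id}$ near $\Gamma$.

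Then I would incorporate $\widehat\Phi$ into the transition flow: since $T_\pm\circ E_{\pm\theta}\circ R_\pm$ is itself the time-one map of the flow of the vector field $Y$ supported outside $[-1,1]^3$, and $\widehat\Phi$ is $C^2$-close to the identity, the map $\widehat\Phi\circ(T_\pm\circ E_{\pm\theta}\circ R_\pm)$ is again isotopic to $T_\pm\circ E_{\pm\theta}\circ R_\pm$ through diffeomorphisms $C^2$-close to it, hence (by the standard suspension/extension argument, e.g.\ realising a diffeomorphism $C^2$-close to the time-one map of a flow as the time-one map of a $C^2$-close flow) it is the time-one map of a vector field $\tilde Y$, $C^2$-close to $Y$, still supported in the outer region. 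Replacing $Y$ by $\tilde Y$ in the construction of Section \ref{CGM} yields a vector field $\tilde X_0$ which is $C^2$-close to $X_0$, coincides with $X_0$ on the linearising cube (so it still defines a geometric Lorenz attractor, using robustness, Proposition \ref{Proprobusto}), and whose associated Poincar\'e map $\tilde P_{0}$ satisfies $\tilde P_{0}=\widehat\Phi\circ P$ on $S^\star$; in particular $\tilde P_{0}=\Phi\circ P=\tilde P$ on a neighbourhood of $\Lambda_{\tilde P}^k$, hence on $\Lambda_{\tilde P}^k$ itself. Since the hyperbolic continuation $\Lambda_{\tilde P}^{k}$ is contained in the region where $\widehat\Phi=\Phi$, we get that $\Lambda_{\tilde P_{0}}^{k}=\Lambda_{\tilde P}^{k}$ and the restrictions agree, which is the assertion.

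The main obstacle I expect is the gluing/extension step: one must be careful that the cut-off procedure producing $\widehat\Phi$ does not destroy the diffeomorphism property (this needs $\Phi$ to be $C^1$-close enough to the identity, which holds by shrinking $\mathcal{U}_{P}$), that the support of the correction genuinely avoids the cusp region and $\Gamma$ (so that the modification can be transported to the ``outside the cube'' flow without interfering with the linear part or with the stable manifold of $O$), and that $\widehat\Phi\circ(T_\pm\circ E_{\pm\theta}\circ R_\pm)$ can indeed be realised as a time-one map of a $C^2$-nearby flow — a point one should phrase cleanly, perhaps by noting that any diffeomorphism $C^r$-close to the identity and compactly supported is the time-one map of a compactly supported $C^r$-small vector field (via $\widehat\Phi=\exp(\log\widehat\Phi)$ on the relevant scale), and composing flows. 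The remaining verifications — that $(f1)$--$(f3)$ and the bounds \eqref{E3S2} survive, that transitivity and the attractor structure persist — are exactly the robustness statements already recorded in Proposition \ref{Proprobusto} and \cite[Section 3.3.4]{AP}, so they can be invoked rather than re-proved.
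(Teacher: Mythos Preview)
Your approach is correct and reaches the same conclusion, but it is organised quite differently from the paper's proof. The paper does not use the decomposition $P=(T_\pm\circ E_{\pm\theta}\circ R_\pm)\circ L$ at all; instead it works directly with the flow near the cross-section. It takes a Markov partition $R_1,\ldots,R_m$ of $\Lambda_{\tilde P}^{k}$, applies the tubular flow theorem to obtain flow-box charts $\psi_i:U_i\times[-1,1]\to\mathbb{R}^3$ around each $R_i$ with $\psi_i(U_i\times\{0\})\subset S$, and in these straightened coordinates writes down an explicit vector field whose bump function acts in the \emph{time} direction, drifting an incoming orbit from its $X$-landing point $P(x,y)$ to the desired landing point $\tilde P(x,y)$. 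Outside the flow boxes one sets $\tilde X=X$. This completely sidesteps your intermediate objects $\Phi$, $\widehat\Phi$ and the (slightly delicate) ``realise $\widehat\Phi\circ(T_\pm\circ E_{\pm\theta}\circ R_\pm)$ as the time-one map of a nearby flow'' step---indeed, once you try to make that last step precise, you are led more or less exactly to the flow-box construction the paper writes down. Your route has the conceptual merit of making explicit that the correction can be confined to the affine, outside-the-cube part of the model; the paper's route is shorter, fully explicit, and uses only that $X$ is nonsingular near $\Lambda_{\tilde P}^{k}$ (so the tubular flow theorem applies), without relying on the particular affine structure of the geometric model.
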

\begin{proof}
For the proof  we construct explicitly a flow $\tilde{X}^t$, with the desired properties.  For this, we proceed as follows.
  
\noindent Let $\tilde{\mathcal{R}}=R_1\cup R_2\cup \cdots\cup R_m$ be a Markov partition of $\Lambda_{\tilde{P}}^{k}$ and let $U_i\subset S$ be an open set with $R_i\subset U_i$, $d(U_i,\Gamma)>\frac{\epsilon}{2}$ for all $i$, and such that if $\tilde{P}(x,y)\in R_i$ then $P(x,y)\in U_i$.
 The tubular flow theorem applied to $X$,  give local charts 
 $\psi_{i}:U_i\times [-1,1]\to \mathbb{R}^{3}$ for $i \in \{1,\cdots, m\}$
 satisfying
\begin{equation}\label{E111P}
\psi_{i}(U_i\times \{0\})\subset S \ \ \ \text{and} \ \ \ D(\psi_{i})_{(x,y,t)}(0,0,1)=X(\psi_{i}(x,y,t)).
\end{equation}
Put $W_i:=\psi_{i}(U_i\times (-1,1))$.  Without loss of generality, we can assume that 
\begin{equation*}\label{E11P}
W_i\cap W_j=\emptyset \quad \mbox{if}\quad i\neq j.
\end{equation*}
 We denote by $\tilde{P}_{i}$ and  $P_{i}$ the map $\tilde{P}$ and $P$ in these coordinates. 

\noindent Let   $\varphi:\re\to\re$ be a $C^{\infty}$ bump function such that $\varphi(t)= 0 $ for $t\leq -1$ and $\varphi(t)=1$ for $t\geq 1$.
Define the following flow on $U_i\times [-1,1]$:

$$\phi^{t}_{i}(x,y,0)=(P_i(x,y)+\varphi(3t+1)(\tilde{P}_i(x,y)-P(x,y)),t).$$
Note that 
\begin{equation}\label{E1P}
\phi^{t}_{i}(x,y,0)=\left\{\begin{array}{ll}(P_i(x,y),t),\ \ \text{if}\ \ t\leq-\frac{2}{3}\\ (\tilde{P}_{i}(x,y),t),\ \ \text{if}\ \  t\geq 0\end{array}\right..
\end{equation}
Consider the vector field on $U_i\times [-1,1]$ given by 
\begin{equation}\label{E22P}
Z_{i}(\phi^{t}_{i}(x,y,0))=\frac{\partial}{\partial t}\phi^{t}_{i}(x,y,0)=(3\varphi'(3t+1)(\tilde{P}_i(x,y)-P(x,y)),1).
\end{equation}
By the equation (\ref{E1P}), this vector field satisfies 
\begin{equation}\label{E2P}
Z_{i}(\phi^{t}_{i}(x,y,0))=\left\{\begin{array}{ll}(0,0,1),\ \ \text{if}\ \ t\leq-\frac{2}{3}\\ (0,0,1),\ \ \text{if}\ \  t\geq 0\end{array}\right..
\end{equation}
Let $Y_{i}$ be the vector field on $W_i=\psi_{i}(U_i\times (-1,1))$ defined by 
\begin{equation*}\label{E3P}
Y_{i}(\psi_{i}(x,y,t))=D(\psi_{i})_{(x,y,t)}(Z_{i}(\phi^{t}_{i}(x,y,0))).
\end{equation*}
By equations (\ref{E111P}) and (\ref{E2P}) we get that 
\begin{equation}\label{E33P}
Y_{i}(\psi_{i}(x,y,t))=X(\psi_{i}(x,y,t)) \ \ \text{for} \ \ t\leq -\frac{2}{3} \ \ \text{and}\ \ t\geq 0.
\end{equation}
\noindent Let $\mathcal{W}$ be the open set $\mathcal{W}=\bigcup_{i=1}^{m}\psi_{i}(U_i\times (-1,1))=\bigcup_{i=1}^{m}W_i$ and consider
the vector field $Y: \mathcal{W} \to \mathbb{R}^{3}$ given by
$Y=Y_i|_{W_i}$. 
Finally, define the  vector field $\tilde{X}$ by
\begin{equation*}\label{E4P}
\tilde{X}:=\left\{\begin{array}{ll}Y,\ \ \text{on}\ \ \mathcal{W}\\ X,\ \ \text{outside of}\ \ \mathcal{W} \end{array}\right..
\end{equation*}
Since $\tilde{P}\in \mathcal{U}_P$,  equation (\ref{E22P}) implies that $\tilde{X}$ is $C^2$-close to $X$.
If $\tilde{X}^{t}$ is the flow associated to the vector field $\tilde{X}$, equations (\ref{E1P}) and (\ref{E33P}) imply that the Poincar\'e map associated to  $\tilde{Y}^{t}$ restricted to $\Lambda_{\tilde{P}}^{k}$ is equal to $\tilde{P}$ restricted to $\Lambda_{\tilde{P}}^{k}$. To finish the proof, note that $d(U_i,\Gamma)>\frac{\epsilon}{2}$ for all $i$, and thus, $\tilde{X}^{t}$ is a geometric Lorenz flow, as desired. 
\end{proof}
\subsection{Regaining the Spectrum}\label{RS}

Recall that we are interested in study the spectrum over a {geometric Lorenz attractor $\Lambda$, that is not a hyperbolic set, as well as $\Lambda\cap S$. Thus, we cannot apply directly the techniques developed in the hyperbolic setting to analyze the spectrum in this case.   
So, the strategy we adopt is to profit from the fact that $\Lambda\cap S$ contains hyperbolic sets $\Lambda_{P}^k$ for the Poincar\'e map $P$  with Hausdorff dimension bigger than $1$. 
 Then we  use  similar arguments developed in  \cite{RM} to show that the Lagrange and Markov Dynamical Spectrum has non-empty interior for a set of $C^1$ real functions over the cross section $S$ and with these functions regaining the spectrum over $\Lambda$.
 In this direction, we proceed as follows.

\indent The dynamical Lagrange and Markov spectra of $\Lambda$ and $\Lambda_{P}^{k}$ are related in the following way. 
Given a function $F\in C^{s}(U,\mathbb{R})$, $s\geq 1$, let us denote by $f =maxF_{\phi}\colon D_{P}\to\re$ the function
$$max F_{\phi}(x):=\max_{0\leq t \leq t_{+}(x)}F(\phi^{t}(x)),$$
where $D_{P}$ is the domain of $P$ and $t_{+}(x)$ is such that  $P(x)=X^{t_{+}(x)}(x)$ and $U$ a neighborhood of $\Lambda$ as  theorem \ref{T2}. 
\begin{R}
The map $f =maxF_{\phi}$ might be not  $C^1$ in general.
\end{R}
{For all $x\in \Lambda_P^k$ we have }
$$\limsup_{n\to +\infty}f(P^n(x))=\limsup_{t\to + \infty}F(X^t(x))
\quad \mbox{and}\quad 
\sup_{n\in \mathbb{Z}}f(P^n(x))=\sup_{t\in \re}F(X^t(x)).$$
{In particular, if  $\Lambda^k=\bigcup_{t\in\re}X^{t}(\Lambda_P^k)\subset \Lambda$ we get}
\begin{equation*}\label{Continuos and Discrete}
L(X, \Lambda^k, F)=L(P, \Lambda_P^k, f) \ \ \text{and} \ \ M(X, \Lambda^k, F)=M(P, \Lambda_P^k, f).
\end{equation*}
\begin{R}
It is worth to note that: given a vector field $Y$ close to $X$, then the flow of $Y$ still defines a Poincar\'e
map $P_Y$ defined in the same cross-sections where $P$ is defined. 
\end{R}
Thus, the last equality reduces theorem \ref{T2} to the following statement:
\begin{T}\label{Theorem 2}In the setting of theorem \ref{T2}, arbitrarily close to $X$ there is an open set $\mathcal{W}$ of $C^2$-vector fields {defined} on $U$ such that for 
{every} $Y\in \mathcal{W}$ { there is a
$C^{2}$  open and dense   subset}  $\mathcal{H}_{Y,\Lambda}\subset C^{2}(U,\re)$, such that 
$$\text{int}\, M(P_{Y},\Lambda_{P_Y}^{k}, max F_{Y})\neq \emptyset \, \, \text{and}\, \,  \, \text{int}\, L(P_{Y},\Lambda_{P_Y}^{k}, max F_{Y})\neq \emptyset$$
whenever $F\in \mathcal{H}_{Y,\Lambda}$.
 Here $\Lambda_{P_Y}^{k}$ denotes the hyperbolic continuation of  $\Lambda_{P}^{k}$.
\end{T}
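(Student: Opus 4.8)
The plan is to reduce Theorem~\ref{Theorem 2} to the Main Theorem of \cite{RM} by checking that its hypotheses hold for the Poincar\'e map $P$ restricted to $\Lambda_{P}^{k}$, and then transporting the perturbations and functions back to the flow level. First I would fix $k$ large enough so that $HD(\Lambda_{P}^{k})>1$, which is possible by Corollary~\ref{Dim. of bi-dim attractor} (or directly by Theorem~\ref{L.Principal} together with Proposition~\ref{Prop.2} and Proposition~\ref{Prop.3}). Since $\Lambda_{P}^{k}$ is a $C^2$ horseshoe for $P$ with $HD>1$, the Main Theorem of \cite{RM} applies verbatim to the pair $(P,\Lambda_{P}^{k})$: arbitrarily close to $P$ (in the $C^2$ topology) there is a diffeomorphism $P_0$ and a $C^2$-neighborhood $\mathcal{U}_P$ of $P_0$ such that for every $\tilde P\in\mathcal{U}_P$ the set $H_1(\tilde P,\Lambda_{\tilde P}^{k})\subset C^1(S,\re)$ is open and dense and every $h\in H_1(\tilde P,\Lambda_{\tilde P}^{k})$ has $\interior M(\tilde P,\Lambda_{\tilde P}^{k},h)\neq\emptyset$ and $\interior L(\tilde P,\Lambda_{\tilde P}^{k},h)\neq\emptyset$.

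Next I would use Lemma~\ref{All-Pert} to realize all the perturbations $\tilde P\in\mathcal{U}_P$ as Poincar\'e maps of geometric Lorenz flows $C^2$-close to $X^t$. Concretely, $P_0$ itself is the Poincar\'e map of some flow $X_0^t$ (a geometric Lorenz flow, since $P_0$ still satisfies (f1)--(f3) by Proposition~\ref{Proprobusto} if we choose the perturbation small, and $d(\Lambda_{P_0}^{k},\Gamma)>0$), and this flow is $C^2$-close to $X^t$; shrinking $\mathcal{U}_P$ if necessary and applying Lemma~\ref{All-Pert} again, we get a $C^2$-open set $\mathcal{W}$ of geometric Lorenz flows near $X^t$ such that for each $Y\in\mathcal{W}$ the restriction $P_Y|_{\Lambda_{P_Y}^{k}}$ lies in $\mathcal{U}_P$ (restricted to $\Lambda_{P_Y}^{k}$), and $\Lambda_{P_Y}^{k}$ is the hyperbolic continuation of $\Lambda_{P}^{k}$. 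The point here is that the Lagrange/Markov spectra only depend on $\tilde P$ through its restriction to the (compact, locally maximal) set $\Lambda_{\tilde P}^{k}$, so it is legitimate to quote \cite{RM} for $\tilde P=P_Y|_{\Lambda_{P_Y}^{k}}$.

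Then comes the transfer of test functions from the cross-section to the ambient manifold $U$. Given $F\in C^2(U,\re)$, set $f=maxF_{Y}=\max_{0\le t\le t_+(x)}F(Y^t(x))$ on the domain $D_{P_Y}$. As recalled in subsection~\ref{RS}, $L(P_Y,\Lambda_{P_Y}^{k},f)=L(Y,\Lambda_Y^{k},F)$ and likewise for $M$, so it suffices to show that the set of $F$ for which $f\in H_1(P_Y,\Lambda_{P_Y}^{k})$ is $C^2$-open and dense in $C^2(U,\re)$; call this set $\mathcal{H}_{Y,\Lambda}$. Openness follows because $H_1(P_Y,\Lambda_{P_Y}^{k})$ is $C^1$-open in $C^1(S,\re)$ and $F\mapsto maxF_{Y}$ is continuous from $C^2(U,\re)$ to $C^1$ on a neighborhood of any point where the maximum defining $f$ is attained transversally at a single regular point of each orbit segment (so that $f$ is locally $C^1$ there); density follows because we may first perturb $F$ in a small tubular neighborhood of the orbit segments over $\Lambda_{P_Y}^{k}$ so that $f$ becomes $C^1$ near its maximum set, has a unique maximum point $z$ on $\Lambda_{P_Y}^{k}$, and $DP_Y{}_z(e_z^{s,u})\neq 0$ — exactly the three conditions defining $H_1$ — and such perturbations of $f$ can be produced by perturbations of $F$ supported near the corresponding flow orbit.

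I expect the main obstacle to be precisely this last step: the function $f=maxF_{Y}$ is only Lipschitz in general (Remark after the definition of $maxF_\phi$), so one must argue carefully that generically the maximum of $F$ along each return orbit segment is attained at a single interior point where $DF\neq 0$ along the flow direction, making $f$ locally $C^1$ there, and that one can independently prescribe the value, the uniqueness, and the non-degeneracy condition $DP_Y{}_z(e_z^{s,u})\neq 0$ by perturbing $F$ — this is the $C^1\to C^2$ analogue of the genericity arguments in \cite{RM}, and it is where the bulk of the work lies. The rest (choosing $k$, quoting \cite{RM}, and invoking Lemma~\ref{All-Pert}) is essentially bookkeeping. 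Once $\mathcal{H}_{Y,\Lambda}$ is shown open and dense, the conclusion $\interior M(P_Y,\Lambda_{P_Y}^{k},maxF_Y)\neq\emptyset$ and $\interior L(P_Y,\Lambda_{P_Y}^{k},maxF_Y)\neq\emptyset$ for $F\in\mathcal{H}_{Y,\Lambda}$ is immediate from \cite{RM}, and Theorem~\ref{Theorem 2} — hence Theorem~\ref{T2} — follows.
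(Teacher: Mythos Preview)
Your proposal is correct and follows essentially the same route as the paper: fix $k$ with $HD(\Lambda_{P}^{k})>1$, apply the Main Theorem of \cite{RM} to the horseshoe $(P,\Lambda_{P}^{k})$ to obtain a $C^2$-open set of Poincar\'e maps, invoke Lemma~\ref{All-Pert} to realize these as return maps of geometric Lorenz flows in an open set $\mathcal{W}$, and then define $\mathcal{H}_{Y,\Lambda}$ as those $F$ for which $maxF_{Y}$ is $C^1$ near $\Lambda_{P_Y}^{k}$ and lies in $H_1(P_Y,\Lambda_{P_Y}^{k})$. The genericity step you flag as the main obstacle is exactly what the paper isolates as Lemma~\ref{set of functions}, whose proof it defers to the arguments of \cite[Section~4]{RM2}; your sketch of that argument (perturbing $F$ along orbit segments to make the fiberwise maximum nondegenerate and then adjusting the three conditions defining $H_1$) is in line with what is done there.
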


\subsubsection{Description of ${\mathcal{H}}_{Y,\Lambda}$}\label{Descript of function} 
Given a compact hyperbolic set $\Delta$ for $P$ and a Markov partition $R$ of $\Delta$, we define the set 
\begin{equation}\label{E-Descrip}
{H}_{1}(P,\Delta)=\left\{f\in C^{1}(S\cap R,\re):\#M_{f}(\Delta)=1, \  z\in M_{f}(\Delta), \ DP_{z}(e_{z}^{s,u})\neq 0\right\},
\end{equation}
where { $S$ is the cross section as the section \ref{CGM}}, $M_{f}(\Delta):=\{z\in \Delta: f(z)\geq f(x)\ \text{for all} \ x\in \Delta\}$, the set of maximum points of $f$ in $\Delta$ and $e_{z}^{s,u}$ are unit vectors in $E^{s,u}(z)$  respectively (cf. \cite[section 3]{RM}).\\

\begin{Defi}
We say that  $F\in\mathcal{H}_{Y,\Lambda}\subset C^2(U,\re)$ if
there is a neighborhood $R_F$ of $\Lambda_{P_Y}^k$ such that 
\begin{itemize}
\item[(i)] 
$maxF_{Y}|_{S\cap R_F}\in C^1(S\cap R_F,\re).$
\item[(ii)] $maxF_{Y}\in H_1(P_Y,\Lambda_{P_Y}^k)\subset C^{1}(S\cap R_F,\re)$. 
\end{itemize}

\end{Defi}
With similar arguments of section 4 at \cite{RM2} we {prove the following result} 
\begin{Le}\label{set of functions}
The set ${\mathcal{H}}_{Y,\Lambda}$ is a dense  $C^2$-open set . 
\end{Le}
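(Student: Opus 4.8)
The plan is to establish openness and density separately, reducing in both cases to the corresponding facts about $H_1(P_Y, \Lambda_{P_Y}^k)$ inside $C^1(S\cap R_F, \re)$, which are essentially contained in \cite{RM} and \cite{RM2}. The map $F \mapsto \mathrm{max}F_Y$ is the bridge: I first need to understand its regularity properties well enough to transfer density and openness statements across it. Recall $\mathrm{max}F_Y(x) = \max_{0\le t\le t_+(x)} F(X^t(x))$, where $t_+$ is the return time to the cross-section $S$; the key structural observation is that $\Lambda_{P_Y}^k$ is uniformly bounded away from the singular line $\Gamma$ (by the choice of $\epsilon$ in Section \ref{PPM}), so the return time $t_+$ is bounded and depends smoothly on $x$ on a whole neighborhood $R_F$ of $\Lambda_{P_Y}^k$. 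Hence, for $x$ in this neighborhood, $\mathrm{max}F_Y$ is a max of a smooth family of smooth functions over a compact parameter interval, and for a $C^2$-generic $F$ the maximum along each orbit segment is attained at a single nondegenerate point $t(x)$ depending smoothly on $x$; this is where condition (i), $\mathrm{max}F_Y|_{S\cap R_F}\in C^1$, becomes a generic condition on $F$.

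For density: starting from an arbitrary $F_0\in C^2(U,\re)$, I would first perturb $F_0$ slightly so that along every orbit segment $\{X^t(x): 0\le t\le t_+(x)\}$ with $x$ near $\Lambda_{P_Y}^k$ the restriction $t\mapsto F(X^t(x))$ has a unique nondegenerate interior (or transverse endpoint) maximum — this is an open dense condition by a Thom transversality / jet-transversality argument applied to the $1$-parameter family, using that $X^t$ is a flow without fixed points in $U\setminus\{O\}$ and that $\Lambda_{P_Y}^k$ avoids $O$. Once this holds, $f := \mathrm{max}F_Y$ is $C^1$ near $\Lambda_{P_Y}^k$, giving (i). Then I invoke the density statement for $H_1$: the set $H_1(P_Y,\Lambda_{P_Y}^k)$ is $C^1$-dense in $C^1(S\cap R_F,\re)$ (cf. \cite[Section 3]{RM}), so I can find a $C^1$-small perturbation $\tilde f$ of $f$ lying in $H_1$. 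The remaining point is to realize $\tilde f$ (or something $C^1$-close to it) as $\mathrm{max}\tilde F_Y$ for some $\tilde F\in C^2(U,\re)$ that is $C^2$-close to $F$: one builds $\tilde F$ by modifying $F$ in a small flow-box neighborhood of the maximizing section $\{X^{t(x)}(x): x\in S\cap R_F\}$, adding a bump supported there that shifts the value of the maximum by the prescribed amount $\tilde f - f$ while keeping the maximizing time and its nondegeneracy intact; since this bump can be taken with small $C^2$-norm when $\tilde f - f$ is $C^1$-small, $\tilde F$ is $C^2$-close to $F$ and lies in $\mathcal H_{Y,\Lambda}$.

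For openness: suppose $F\in\mathcal H_{Y,\Lambda}$. Condition (i) is stable because the nondegenerate maximum $t(x)$ of $t\mapsto F(X^t(x))$ persists and varies $C^1$-continuously under $C^2$-small perturbations of $F$ (implicit function theorem applied to $\partial_t(F\circ X^t) = 0$ with nonvanishing second derivative), so $\mathrm{max}F'_Y$ stays $C^1$ on a slightly smaller neighborhood $R_{F'}$ and moreover the assignment $F'\mapsto \mathrm{max}F'_Y\in C^1$ is continuous near $F$. Condition (ii), that $\mathrm{max}F'_Y\in H_1(P_Y,\Lambda_{P_Y}^k)$, is then inherited from the $C^1$-openness of $H_1$ (the conditions $\#M_f(\Lambda_{P_Y}^k)=1$ and $DP_z(e_z^{s,u})\ne 0$ are open in $C^1$, as noted after \eqref{E-Descrip} and in \cite[Section 3]{RM}), combined with the continuity just established. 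This gives a $C^2$-neighborhood of $F$ inside $\mathcal H_{Y,\Lambda}$.

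I expect the main obstacle to be the regularity transfer in the density step — namely, proving that $\mathrm{max}F_Y$ is genuinely $C^1$ (not merely Lipschitz) for generic $F$ and that one can prescribe its values while controlling the $C^2$-norm of $F$. The subtlety is that $\mathrm{max}F_Y$ is a priori only Lipschitz (the Remark after the definition of $\mathrm{max}F_\phi$ flags exactly this), so the argument must carefully arrange a \emph{unique, nondegenerate, and interior-or-transverse} maximizer along each orbit segment, uniformly over the compact set $\Lambda_{P_Y}^k$, and then use a tubular-neighborhood (flow-box) construction — much as in Lemma \ref{All-Pert} — to localize the perturbation near the maximizing cross-section without disturbing the dynamics. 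Once this is set up, the rest is a routine combination of Thom transversality and the already-established $C^1$-genericity results of \cite{RM, RM2}.
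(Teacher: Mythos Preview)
The paper does not actually supply a proof of this lemma: immediately before the statement it writes ``With similar arguments of section 4 at \cite{RM2} we prove the following result'' and then states the lemma without further argument. So there is no in-paper proof to compare against; everything is deferred to the analogous situation for geodesic flows treated in \cite{RM2}.

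Your proposal is a reasonable and fairly detailed fleshing-out of exactly the kind of argument that reference points to: reduce to the $C^1$-openness and density of $H_1(P_Y,\Lambda_{P_Y}^k)$ from \cite{RM}, and bridge via the map $F\mapsto \max F_Y$ by (a) arranging, via a transversality/jet argument, that the fibrewise maximum along orbit segments is unique and nondegenerate so that $\max F_Y$ is $C^1$ near $\Lambda_{P_Y}^k$, and (b) using a flow-box bump construction (in the spirit of Lemma \ref{All-Pert}) to realize $C^1$-small perturbations of $\max F_Y$ by $C^2$-small perturbations of $F$. The openness half, via implicit-function stability of the nondegenerate maximizer $t(x)$ and continuity of $F\mapsto \max F_Y$ into $C^1$, is the standard argument. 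The points you flag as delicate --- genuine $C^1$ regularity of $\max F_Y$ (the paper's own Remark warns it may fail in general) and the lifting of a prescribed $C^1$-perturbation of $f$ to a $C^2$-perturbation of $F$ --- are precisely the places where the work in \cite[Section 4]{RM2} is being invoked, and your outline of how to handle them is along the right lines. One minor remark: in the definition \eqref{E-Descrip} the condition written as ``$DP_z(e_z^{s,u})\neq 0$'' is almost certainly a typo for ``$Df_z(e_z^{s,u})\neq 0$'' (a condition on $f$, not on $P$), and your argument implicitly reads it that way, which is correct.
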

\begin{R}
If $Y$ is $C^2$ close enough of $X$, then $HD(\Lambda_{P_Y}^k)>1$\, $(cf. \,\, \cite[sec. 4.3]{PT})$.
\end{R}

\begin{proof}[\bf Proof of  Theorem \ref{Theorem 2}]
As $\Lambda_{P}^k>1$, by main theorem at \cite{RM}, arbitrarily close to $P$ there exists a $C^2$ open set $\widetilde{\mathcal{W}}$, such that for $\tilde{P}\in \widetilde{ \mathcal{W}}$ it holds 
$$\text{int}\, M(\tilde{P}, \Lambda_{\tilde{P}}^{k}, f )\neq \emptyset \ \ \text{and} \ \\, \text{int}\, M(\tilde{P}, \Lambda_{\tilde{P}}^{k}, f )\neq \emptyset,$$
whenever $f\in H_{1}(\tilde{P}, {\Lambda_{\tilde{P}}}^{k})$.
Note also that  lemma \ref{All-Pert} provides  a  neighborhood $\mathcal{W}$, $C^2$ close to $X$, such that for any $\tilde{P}\in \widetilde{ \mathcal{W}}$ there is $Y\in \mathcal{W}$  such that $P_{Y}=\tilde{P}$ in a neighborhood of $\Lambda_{\tilde{P}}^{k}$. Thus, for $Y\in { \mathcal{W}}$ and $F\in \mathcal{H}_{Y,P}$  it holds 
$$\text{int}\, M(P_{Y}, \Lambda_{P_{Y}}^{k}, max F_{Y}|_{S\cap R_F})\neq \emptyset \ \ \text{and} \ \ \text{int}\, L(P_{Y}, \Lambda_{P_{Y}}^{k}, max F_{Y}|_{S\cap R_F})\neq \emptyset,$$
since $max F_{Y}|_{S\cap R_F}\in H_{1}(P_Y, \Lambda_{P_{Y}}^{k})$. This finishes  the proof of theorem  \ref{Theorem 2} and so  concluding the proof of theorem \ref{T2}.

\end{proof}

\bibliographystyle{alpha}

\noindent Carlos Gustavo T. Moreira\\
Instituto de Matem\'atica Pura e Aplicada (IMPA),\\
Estrada Dona Castorina, 110,\\
 22460-320,  Rio de Janeiro - RJ.\\ 
e-mail {\em{gugu@impa.br}}\\

\noindent Maria Jos\'e Pacifico \,\,$\&$\,\, Sergio Roma\~na Ibarra\\
Instituto de Matem\'atica,
Universidade Federal do Rio de Janeiro.\\
C. P. 68.530, CEP 21.945-970, Rio de Janeiro, RJ.\\
e-mail: {\em{pacifico@im.ufrj.br \quad 
sergiori@im.ufrj.br}}

\end{document}